 \numberwithin{equation}{section}
\theoremstyle{plain}
\newtheorem{thm}{Theorem}[section]
\newtheorem{lem}{Lemma}
\numberwithin{lem}{section}
\newtheorem{pro}{Proposition}
\numberwithin{pro}{section}
\newtheorem{cor}{Corollary}
\numberwithin{cor}{section}
\newtheorem{ex}{Example}
\numberwithin{ex}{section}
\newtheorem{de}{Definition}
\numberwithin{de}{section}
\newtheorem{rem}{Remark}
\numberwithin{rem}{section}
\def\R {{\Bbb R}}
\def\N {{\Bbb N}}
\def\M {{\mathcal M}}
\def\Q {{\Bbb Q}}
\def\F {{\mathcal F}}
\def\A {{ A}}
\def\B {{B}}
\def\O{{O}}
\def\o{{O}}
\def\p{{A}}
\def\q{{B}}
\def\C {\pmb{C}}
\def\F{{\mathbf F}}
\def\E{{\mathbf E}}
\def\cF{{\mathcal F}}
\def\f{{\mathcal F}}
\def\ba{{\bf a}}
\def\bb{{\bf b}}
\def\bc{{\bf c}}
\def\bu{{\bf u}}
\def\bv{{\bf v}}
\def\bx{{\bf x}}
\def\by{{\bf y}}
\def\bw{{\bf w}}
\def\bz{{\bf z}}
\begin{document}
\baselineskip 18pt

\title[Self-similar subsets of the Cantor set]{Self-similar subsets of the Cantor set}

\date{}

\author{ De-Jun FENG}
\address{
Department of Mathematics,
The Chinese University of Hong Kong,
Shatin,  Hong Kong}

\email{djfeng@math.cuhk.edu.hk}

\author{Hui Rao}
\address
{Department of Mathematics, Central China Normal University, Wuhan 430070, Hubei,
Peoples's republic of China
}
\email{hrao@mail.ccnu.edu.cn}
\author{Yang Wang}
\address
{Department of Mathematics, Michigan State University, East Lansing, MI 48824, United States
}
\email{ywang@math.msu.edu}

\keywords{Cantor set, Self-similar sets, Ternary expansions, Set of uniqueness}

 \thanks {
2000 {\it Mathematics Subject Classification}: Primary 28A78, Secondary  28A80, 11K16}

\maketitle

\begin{abstract}
In this paper, we study the following question raised by Mattila  in 1998: what are the self-similar subsets of the middle-third Cantor set $\C$? We give criteria for a complete classification of all such subsets. We show that for any  self-similar subset $\F$ of $\C$ containing more than one point every linear generating  IFS of $\F$ must consist of similitudes with contraction ratios $\pm 3^{-n}$, $n\in \N$.
In particular, a  simple  criterion is formulated to characterize self-similar subsets of $\C$ with equal contraction ratio in modulus.
\end{abstract}

\section{Introduction}
\label{S-1}
Let $\C$ denote the standard middle-third Cantor set. The main goal of this paper is to answer the following open question raised by Mattila \cite{Cso98} in 1998:  what are the self-similar subsets of  $\C$?

 Recall that a  non-empty compact set $\F\subset\R$ is said to be  {\it self-similar} if there exists a finite family  $\Phi=\{\phi_i\}_{i=1}^k$ of contracting similarity maps on $\R$ such that \begin{equation}
 \label{e-0.1}
 \F=\bigcup_{i=1}^k\phi_i(\F).
 \end{equation}
  Such $\Phi$ is called a linear {\it iterated function system} (IFS) on $\R$. As proved by Hutchinson \cite{Hut81}, for a given IFS $\Phi$, there is a unique non-empty compact set $\F$ satisfying \eqref{e-0.1}. To specify the relation between $\Phi$ and $\F$, we call  $\Phi$  a {\it linear generating IFS} of $\F$, and $\F$  the {\it attractor} of $\Phi$. Throughout this paper, we  use $\F_\Phi$ to denote the  attractor of a given linear IFS $\Phi$.   A self-similar set $\F=\F_\Phi$ is said to be  {\it non-trivial} if it is not a singleton.

The middle-third Cantor set $\C$ is one of the most  well known examples of self-similar sets.
It has a generating IFS $\{x/3, (x+2)/3\}$.

The first result of this paper is the following theorem, which is  our starting point for further investigations.

\begin{thm}
\label{thm-1.1}
Assume that $\F\subseteq \C$ is a non-trivial self-similar set, generated by a linear IFS
$\Phi=\{\phi_i\}_{i=1}^k$ on $\R$.
Then for each $1\leq i\leq k$, $\phi_i$ has contraction ratio $\pm 3^{-m_i}$, where $m_i\in \N$.
\end{thm}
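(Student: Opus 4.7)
My plan is, for each fixed $i$, to analyze the iterates of $\phi_i$ acting on the attractor. Let $r=r_i$ and let $x^{\ast}$ denote the fixed point of $\phi_i$; since $\phi_i^n(y)\to x^{\ast}$ for every $y\in\F$ and $\F$ is closed with $\phi_i(\F)\subseteq\F$, we have $x^{\ast}\in\F\subseteq\C$. Because $\F$ is non-trivial, each iterate
\[
\phi_i^n(\F)=x^{\ast}+r^n(\F-x^{\ast}),\qquad n\ge 0,
\]
is a non-trivial similar copy of $\F$ contained in $\F\subseteq\C$ and shrinking to $x^{\ast}$ as $n\to\infty$. It is this arithmetic-in-$\C$ structure that I intend to exploit.

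The first lever is the local self-similarity of $\C$ at the Cantor point $x^{\ast}$: for every $N\in\N$ the basic interval $I_N$ of level $N$ containing $x^{\ast}$ (of length $3^{-N}$) satisfies $\C\cap I_N=c_N+3^{-N}\C$ for some $c_N\in\R$. Writing $d_0=\max\F-\min\F$, once $|r|^n d_0<3^{-N}$ the convex hull of $\phi_i^n(\F)$ is too short to cross any gap of $\C$ separating distinct level-$N$ basic intervals (such gaps all have length $\ge 3^{-N}$), so $\phi_i^n(\F)\subseteq I_N$. Rescaling $I_N$ back to $[0,1]$ then produces
\[
3^N(x^{\ast}-c_N)+3^N r^n(\F-x^{\ast})\subseteq\C.
\]
I will choose $N=N(n)$ so that $s_n:=3^{N(n)}r^n$ stays in a bounded window away from $0$ (concretely, $|s_n|d_0\in(1/3,1]$), then extract via Hausdorff compactness a subsequential limit and obtain an affine embedding $a+s(\F-x^{\ast})\subseteq\C$ whose scale $|s|$ is bounded below and determined by the limit of $\{n\log_3|r|\}\bmod 1$.

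The crux --- and the step I expect to be the main obstacle --- is converting a non-$3^{-m}$ value of $|r|$ into a contradiction through this family of embeddings. If $|r|\neq 3^{-m}$ for every $m\in\N$, then either $\log_3|r|$ is irrational, so Weyl equidistribution of $\{n\log_3|r|\}\bmod 1$ lets me tune the limit $|s|$ to any prescribed value in the window; or $\log_3|r|$ is a non-integer rational $p/q$ in lowest terms, so iterates yield an embedding at scale $3^{-p/q}\notin\{3^{-m}\}$. In either case I need to show that no non-trivial self-similar subset of $\C$ admits an affine embedding into $\C$ at a scale outside $\{3^{-m}:m\ge 0\}$ --- presumably by examining the ternary expansions of the purported embedded points and forcing some ternary digit to equal $1$, contradicting membership in $\C$. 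Finally, the case $r<0$ reduces to the positive case by replacing $\phi_i$ with $\phi_i^2$, whose ratio $r^2$ is positive; showing $r^2=3^{-2m}$ then gives $|r|=3^{-m}$.
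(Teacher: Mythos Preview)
Your outline funnels both the irrational and the rational non-integer case into a single claim: \emph{no non-trivial self-similar subset of $\C$ admits an affine embedding into $\C$ at a scale outside $\{3^{-m}:m\ge 0\}$}. You then propose to prove this ``presumably by examining ternary expansions \ldots\ and forcing some ternary digit to equal $1$.'' This is the genuine gap, and it is not a step one can fill by a direct digit argument.

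For the irrational case your setup is close to what the paper does in Lemma~\ref{lem-2.2}, but the paper does \emph{not} pass through your general embedding claim. It argues directly: with $c\in\C$, $d>0$ and $c+\rho^{2n}d\in\C$ for all $n$, irrationality of $\log_3|\rho|$ gives $n,m$ with $3^m\rho^{2n}d\in(1,1+\epsilon)$; then a further iterate $c+\rho^{2(n+p)}d$ is forced into the gap of length $3^{-m}$ adjacent to the $m$-th basic interval containing $c$. Your compactness/equidistribution route can likely be pushed through here too, but note that the translation $a$ depends on the chosen subsequence, so ``tuning $|s|$ so that a specific point falls in a gap'' is not immediate; the paper's explicit gap-finding is both shorter and avoids this issue.

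The rational non-integer case is where your proposal breaks down. Suppose Lemma~\ref{lem-2.2} is granted, so each ratio is $\pm 3^{-p_i}$ with $p_i\in\Q$. To upgrade $p_i\in\Q$ to $p_i\in\N$, the paper pairs $\phi_1$ with some $\phi_i$ having a different fixed point, forms the two-map IFS $\{\phi_1\circ\phi_i,\ \phi_i\circ\phi_1\}$ (common ratio $3^{-(p_1+p_i)}$, non-trivial attractor contained in $\C$), and invokes the Salem--Zygmund theorem: a two-map equal-ratio attractor is a set of uniqueness iff the reciprocal of the ratio is a Pisot number. Since every subset of $\C$ is a set of uniqueness, $3^{p_1+p_i}$ must be Pisot, which forces $p_1+p_i\in\N$ (for $q\ge 2$ the conjugates of $3^{p/q}$ are $3^{p/q}\zeta$ with $\zeta^q=1$, all of modulus $>1$). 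Repeating with $\phi_i^2$ gives $p_1+2p_i\in\N$, hence $p_1\in\N$. This step is a genuine input from harmonic analysis; a ternary-digit argument will not produce it, because the obstruction is arithmetic (Pisot vs.\ non-Pisot) rather than combinatorial. Your proposal, as written, does not supply any mechanism to exclude ratios such as $3^{-3/2}$, and the ``general embedding'' claim you reduce to is at least as hard as the theorem itself.
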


The proof of the above theorem is based on a short geometric argument  and a fundamental result of Salem and Zygmund on the sets of uniqueness in harmonic analysis.

It is easy to see that if a self-similar set $\F$ has a generating IFS $\Phi=\{\phi_i\}_{i=1}^k$
that is {\it derived} from the IFS $\Psi:=\{x/3, (x+2)/3\}$,
i.e. each map in $\Phi$ is a finite composition of maps in $\Psi$, then $\F\subseteq \C$.
In light of Theorem \ref{thm-1.1}, one may guess that each nontrivial  self-similar subset of $\C$ has a linear generating IFS derived from $\Psi$. However, this is not true.
The following counter example  was constructed in \cite{FeWa09}.

\begin{ex}
Let
$\Phi =\{\frac{1}{9}x, \frac{1}{9}(x+2)\}$.
Choose a sequence $(\epsilon_n)_{n=1}^\infty$ with $\epsilon_n\in \{0,2\}$
 so that
$w=\sum_{n=1}^\infty \epsilon_n 3^{-2n+1}$ is an irrational number.
Then by looking at the ternary expansion of the elements in $\F_\Phi+w:=\{x+w:\; x\in \F_\Phi\}$,  it is easy to see that $\F_\Phi+w\subset \C$.
Observe that $\F_\Phi+w$ is a self-similar subset of $\C$ since it is the attractor of the IFS $\Phi'=\{\frac{1}{9}(x+8w), \frac{1}{9}(x+2+8w)\}$. However any generating IFS of $\F_{\Phi'}$ can not be derived from the original
IFS $\{\psi_0=x/3,\psi_1=(x+2)/3\}$, since $w=\min \F_{\Phi'}$ can not be the fixed point of any map $\psi_{i_1i_2\ldots i_n}$ composed from  $\psi_0, \psi_1$ due to the irrationality of $w$.
\end{ex}

The above construction actually shows that  $\C$ has  uncountably many non-trivial self-similar subsets, and indicates the non-triviality of Mattila's question.

To further describe self-similar subsets of $\C$, we consider a special class of linear IFSs.
Let  ${\mathcal F}$ denote the collection of  linear IFSs $\Phi=\{\phi_i\}_{i=1}^k$ on $\R$ that satisfy the following conditions (i)-(iii):
  \begin{itemize}
  \item[(i)] $k\geq 2$ and for each $1\leq i\leq k$, $\phi_i$ is of the form
       \begin{equation}
  \label{e-0}
\phi_i(x)=s_i 3^{-m_i}x+d_i, \quad s_i=\pm 1, \ m_i\in \N,\ 0\leq d_i\leq 1.
\end{equation}
\item[(ii)]$s_1=1$ and $d_1=0$.
\item[(iii)] $d_i>0$ for at least one $1\leq i\leq k$. Moreover $d_i>0$ if  $s_i=-1$.
\end{itemize}

It is easy to check that for each $\Phi\in {\mathcal F}$, $\F_\Phi\subseteq [0,2]$ and $\min \F_\Phi=0$.  By  the symmetry of $\C$ (i.e. $\C=1-\C$),  if $\F$ is a non-trivial self-similar subset of $\C$,  then  $1-\F$ is also a self-similar subset of $\C$.
 The following result is a simple consequence of Theorem \ref{thm-1.1} (see Appendix~\ref{sec-appendix} for a proof).
 \begin{lem}
\label{lem-b}
Let $\F\subseteq \C$ be a non-trivial self-similar set.
 Then either $\F$ or $1-\F$ can be written as $a+\E$, where $a\in \C$ and $\E=\F_\Phi$ for some  $\Phi\in \f$.
\end{lem}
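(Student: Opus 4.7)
The plan is to normalize either $\F$ or $1-\F$ so that its minimum lies at $0$, and then to exhibit, for the normalized set, a generating IFS that belongs to $\f$. Write $a=\min\F$ and $b=\max\F$; both lie in $\F\subseteq\C$, and by the symmetry $\C=1-\C$ the point $1-b$ also belongs to $\C$. The two candidates are $\E_1:=\F-a$ and $\E_2:=b-\F=(1-\F)-(1-b)$, each a non-trivial self-similar set contained in $[0,M]\subseteq[0,1]$ with $M=b-a$. The task thus reduces to producing, for one of $\E_1$ or $\E_2$, an IFS satisfying conditions (i)--(iii) of the family $\f$.

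I would start from any generating IFS $\Phi=\{\phi_i\}_{i=1}^{k}$ of $\F$, so that by Theorem~\ref{thm-1.1} each map has the form $\phi_i(x)=s_i 3^{-m_i}x+b_i$, and ask which map realizes the minimum $a$ in the decomposition $\F=\bigcup_i\phi_i(\F)$. Any such $\phi_j$ must satisfy either $s_j=1$ with $\phi_j(a)=a$ (orientation-preserving, fixing the minimum), or $s_j=-1$ with $\phi_j(b)=a$. In the first subcase, translation by $-a$ turns $\phi_j$ into $x\mapsto 3^{-m_j}x$, and after relabeling it as the first map produces an IFS of $\E_1$ satisfying (ii). Symmetrically, if some map of $\Phi$ fixes $b$, then conjugation by $T(x)=b-x$---which preserves the sign of each contraction ratio---yields an IFS of $\E_2$ containing a map of the form $x\mapsto 3^{-m}x$.

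The main obstacle is the remaining ``double-reversal'' case in which no map of $\Phi$ fixes either $a$ or $b$: every $\phi_j$ realizing the minimum is orientation-reversing with $\phi_j(b)=a$, and every $\phi_k$ realizing the maximum is orientation-reversing with $\phi_k(a)=b$. My plan here is to pass to the iterated IFS $\Phi^2=\{\phi_{i_1}\circ\phi_{i_2}\}_{i_1,i_2=1}^{k}$, which still generates $\F$. The composition $\phi_j\circ\phi_k$ is then orientation-preserving and satisfies $\phi_j(\phi_k(a))=\phi_j(b)=a$, so it fixes $a$; applying the previous paragraph to $\Phi^2$ in place of $\Phi$ produces an IFS of $\E_1$ containing the map $x\mapsto 3^{-(m_j+m_k)}x$, thereby giving (ii).

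The remaining conditions are straightforward to verify. Condition (i) follows because the ratios $\pm 3^{-m}$ are closed under composition, and the inclusion $\phi(\E)\subseteq\E\subseteq[0,M]\subseteq[0,1]$ in any of the translated IFSs forces every shift $d$ to lie in $[0,1]$. For the second half of (iii), any map with $s=-1$ and $d=0$ would send $[0,M]$ into $[-3^{-m}M,0]$, contributing only $\{0\}$ to the attractor; since $\{0\}$ is already supplied by the marked map of (ii), such maps can be deleted without changing the attractor. The first half of (iii) and the requirement $k\geq 2$ are both forced by the non-triviality of $\F$, since an IFS with all $d_i=0$, or consisting of just a single contraction, would have a singleton attractor.
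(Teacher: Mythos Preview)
Your argument is correct and follows essentially the same route as the paper: both use the endpoints $a=\min\F,\ b=\max\F$, split into cases according to whether an orientation-preserving map of the IFS fixes $a$ or $b$, and in the remaining ``double-reversal'' case pass to the squared IFS $\Phi^2$ to obtain an orientation-preserving map fixing $a$. The only cosmetic difference is that the paper packages the verification of conditions (i)--(iii) of $\f$ into a separate lemma (Lemma~\ref{lem-a}), where in particular it observes that a map with $s=-1$ and $d=0$ is in fact impossible (by injectivity, $\psi(\E)$ has at least two points), whereas you argue such maps may simply be deleted; both conclusions suffice.
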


Hence, to characterize the self-similar subsets of $\C$, it is equivalent to characterize  the  pairs $(a, \Phi)\in  \C\times \f$ so that  $a+\F_\Phi\subseteq \C$. This is what we will carry out in this paper.

We start by introducing some notation and definitions.  Notice that any real number $x\in [0,1]$ admits at least one ternary expansion $$
x=\sum_{n=1}^\infty u_n 3^{-n}
$$
with $u_n\in \{0,2,-2\}$, and  certain rational numbers may admit two different such expansions.  For our purpose, we construct a subset $\Theta\subset \left\{0,2, \overline{2}\right\}^\N$, with convention $\overline{2}:=-2$,  by
\begin{equation}
\label{e-e6*}
\Theta=\left\{\bu\in \left\{0, 2, \overline{2}\right\}^\N:\; \iota(\bu)\neq -2,\; \bu \mbox{ does not end with } 2 \overline{2}^\infty \mbox{ and } \overline{2} 2^\infty\right\},
\end{equation}
where
\begin{equation}
\label{e-e5*}
\iota(\bu)=\left\{\begin{array}{ll}
0, & \mbox{ if } \bu= 0^\infty,\\
\mbox{the first non-zero letter appearing in $\bu$}, & \mbox{ if } \bu\neq  0^\infty.
  \end{array}
  \right.
\end{equation}
Here we say that $\bu$ ends with $\bc\in \left\{0,2,\overline{2}\right\}^\N$ if $\bu=\omega \bc$ for some finite or empty word $\omega$ over the alphabet $\left\{0, 2, \overline{2}\right\}$.  Then it is easily checked that  every $x\in [0,1]$ admits one and only one ternary expansion  $
\sum_{n=1}^\infty u_n 3^{-n}
$ with  $\bu=(u_n)_{n=1}^\infty\in \Theta$.

\begin{de}
\label{de-1.1}
For  $x\in [0,1]$, the unique ternary expansion  $
x=\sum_{n=1}^\infty u_n 3^{-n}
$ with  $(u_n)_{n=1}^\infty\in \Theta$, is called the intrinsic ternary expansion of $x$.
\end{de}

Clearly  any $x\in \C$ admits an intrinsic ternary expansion $\sum_{n=1}^\infty b_n 3^{-n}$ with $b_n\in \{0,2\}$. Now we define the notion of intrinsic translation matrix.

\begin{de} Let
$\Phi=\{\phi_i(x)=s_i3^{-m_i}x+d_i\}_{i=1}^k\in \cF$. Let $\sum_{n=1}^\infty u_{i,n} 3^{-n}$ be the intrinsic ternary expansion of $d_i$, $i=1,\ldots, k$. Denote by $U_\Phi$  the following $k\times \infty$ matrix with entries in $\left \{0, 2, \overline{2}\right\}$:
\begin{equation}
\label{e-1.6}
U_\Phi=(u_{i,n})_{1\leq i\leq k, n\geq 1}.
\end{equation}
We call $U_\Phi$ the intrinsic translation matrix of $\Phi$.
\end{de}

For  $n\in \N$, let $U_\Phi(n)$ denote the $n$-th column of $U_\Phi$, i.e.
$$U_\Phi(n)=(u_{1,n},\ldots, u_{k,n})^T,$$
 where the superscript $T$ denotes the transpose.  We say that the vector $U_\Phi(n)$ is  {\it positive}   if $u_{i,n}\in \{0,2\}$ for all $1\leq i\leq k$ and  $u_{j,n}=2$ for at least one $j\in \{1,\ldots, k\}$; correspondingly, we say that  $U_\Phi(n)$ is  {\it negative} if $u_{i,n}\in \{0,\overline{2}\}$ for all $1\leq i\leq k$ and  $u_{j,n}=\overline{2}$ for at least one $j\in \{1,\ldots, k\}$.



For $m\in \N$, let $\cF_m^+$  denote the collection of $\Phi\in \cF$ with uniform contraction ratio $3^{-m}$, and $\cF_m$ the collection of $\Phi\in \cF$ with contraction ratios $\pm 3^{-m}$.

Now we are ready to present one of our main results.

\begin{thm}
\label{thm-1.4} Let $m\in \N$ and $(a, \Phi)\in \C\times \cF_m^+$. Define $U_\Phi$ as in \eqref{e-1.6}.
Write $a=\sum_{n=1}^\infty a_n 3^{-n}$ with $a_n\in \{0,2\}$.
 Then $a+\F_\Phi\subseteq \C$ if and only if  there exists a finite non-empty set $\M\subset \N$  such that the following properties (i)-(iv) hold:\begin{itemize}
\item[(i)]
Any two numbers in $\M$
are incongruent modulo $m$;
\item[(ii)]
For each $n\in \M$,  $U_\Phi(n)$ is either positive or negative;
\item[(iii)]
For each $n\in \N\backslash \M$,  $U_\Phi(n)=(0,0,\ldots,0)^T$;
\item[(iv)] For any $n\in \M$, and any integer $t\geq 0$,
$$a_{n+mt}=\left\{\begin{array}{ll}
0,  & \mbox{ if  $U_\Phi(n)$ is  positive},\\
2,  & \mbox{ if  $U_\Phi(n)$ is  negative}.
\end{array}
\right.
$$
\end{itemize}
\end{thm}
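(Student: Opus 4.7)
\noindent\emph{Sufficiency.} Given (i)--(iv), I would take an arbitrary point $z=\sum_{t\ge 0} 3^{-mt}d_{i_{t+1}}\in \F_\Phi$, substitute the intrinsic expansion $d_i=\sum_{j\ge 1} u_{i,j}3^{-j}$, and interchange the summations. The coefficient of $3^{-N}$ in the resulting formal expansion of $a+z$ is
\[
v_N = a_N + \sum_{t\ge 0,\; N-mt\in \M} u_{i_{t+1},\,N-mt}.
\]
By (iii) only those $c=N-mt$ lying in $\M$ contribute, and by (i) at most one such $c$ occurs for a given $N$. If none exists, $v_N=a_N\in\{0,2\}$. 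Otherwise, writing $c=N-mt_0$ and splitting on whether $U_\Phi(c)$ is positive or negative, conditions (ii) and (iv) force $a_N+u_{i_{t_0+1},c}\in\{0,2\}$ in either case. Hence every $v_N\in\{0,2\}$, no carrying is required, and this formal expansion is the intrinsic expansion of $a+z$, proving $a+z\in\C$.

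\noindent\emph{Necessity.} Assume $a+\F_\Phi\subseteq \C$ and let $\M:=\{n\ge 1:U_\Phi(n)\ne 0\}$, so (iii) becomes tautological and $\M$ is non-empty because some $d_i>0$. Finiteness of $\M$ will follow from (i) since at most one column per residue class modulo $m$ can lie in $\M$. I would verify (i), (ii), (iv) by inducting on the columns of $U_\Phi$ in increasing order. For the smallest nonzero column $n^*:=\min \M$, the restriction $\iota(\bu)\ne -2$ in the definition of $\Theta$ combined with $d_i\ge 0$ forces the first nonzero intrinsic digit of each $d_i$ to be $+2$; every nonzero entry of $U_\Phi(n^*)$ is therefore $+2$, so $U_\Phi(n^*)$ is positive. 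Picking $i$ with $u_{i,n^*}=2$ and using $\phi_1(x)=3^{-m}x$ to note that $3^{-mt}d_i\in \F_\Phi$ for every $t\ge 0$, I would compare the intrinsic expansions of $a+3^{-mt}d_i\in\C$ and $a\in\C$: they must agree in positions $1,\dots,n^*+mt-1$, so the carry that would be triggered by $a_{n^*+mt}=2$ in the presence of the $+2$ contribution from $d_i$ is geometrically forbidden by the minimum separation $3^{-(n^*+mt-1)}$ between adjacent level-$(n^*+mt-1)$ components of $\C$. Hence $a_{n^*+mt}=0$, establishing (iv) at $n^*$.

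The inductive step at a later $c\in \M$ proceeds in parallel. For (ii) at $c$, if $u_{i,c}$ and $u_{j,c}$ had opposite signs then comparing the intrinsic expansions of $a+d_i$ and $a+d_j$ at position $c$ would yield incompatible values for $a_c$. For (i), if $c_1<c_2$ both lay in $\M$ with $c_1\equiv c_2\pmod m$, I would build a sequence $(i_t)$ with $u_{i_1,c_2}$ and $u_{i_{(c_2-c_1)/m+1},\,c_1}$ both nonzero with signs matching their columns; position $c_2$ of the formal expansion of $a+z$ then receives two independent $\pm 2$ contributions, producing a formal digit that the same gap argument rules out. Condition (iv) at $c$ then follows from the first-column reasoning, applied to the isolated contribution of column $c$ at position $c+mt$ that the structure of $\F_\Phi$ under $\phi_1$ makes available.

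\noindent\emph{Main obstacle.} The chief difficulty is controlling carries in the formal expansion of $a+z$: an out-of-range formal digit could in principle be corrected by a cascading carry so that the resulting intrinsic expansion still lies in $\{0,2\}^{\N}$. The plan is to neutralize this via the $\phi_1$-invariance of $\F_\Phi$---which lets me isolate single-column contributions at each scale $3^{-mt}$ by selecting sequences $(i_t)$ that are eventually $1$---together with the quantitative gap structure of $\C$ at every level, which should preclude exactly the carry cascades that would otherwise admit exotic $U_\Phi$ configurations.
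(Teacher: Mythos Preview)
Your sufficiency argument is correct and is essentially the paper's ``if'' direction unpacked: the conditions (i)--(iii) ensure that for any $\bx\in\{1,\ldots,k\}^*$ the $p$-th column of $V(\bx)$ has at most one nonzero entry, and (iv) then gives property (ii) of Theorem~\ref{thm-1.3}. Your digit-by-digit computation of $v_N$ is exactly that observation phrased without the $V(\bx)$ formalism.

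The necessity direction, however, has a genuine gap, and it is precisely the one you flag as the ``main obstacle'' without actually closing. Two concrete places where the argument breaks:

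\emph{Base step, claim (iv) at $n^*$.} You assert that the intrinsic expansions of $a$ and $a+3^{-mt}d_i$ must agree in positions $1,\ldots,n^*+mt-1$, appealing to the gap structure of $\C$. But $3^{-mt}d_i$ can be as large as $3^{-(n^*+mt-1)}$ (when $\bu_i=0^{n^*-1}2^\infty$), which is exactly the scale of a level-$(n^*+mt-1)$ interval. If additionally $a_{n^*+mt-1}=0$ and $a_j=2$ for all $j\ge n^*+mt$, then $a+3^{-mt}d_i$ lands on the left endpoint of the \emph{next} basic interval and your agreement claim fails for this single $t$. This is the set $\Gamma$ of Proposition~\ref{pro-4.1}. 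It \emph{can} be ruled out by comparing two different values of $t$ (the conditions $a_{n^*+mt-1}=0$ and $a_{n^*+m(t+1)-1}=2$ are incompatible since $m\ge 1$), but you do not carry this out; ``the gap argument'' is not a proof here.

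\emph{Inductive step, claim (ii) at $c>n^*$.} You write that opposite signs of $u_{i,c}$ and $u_{j,c}$ would force incompatible values of $a_c$ by ``comparing the intrinsic expansions of $a+d_i$ and $a+d_j$ at position $c$.'' But $d_i$ and $d_j$ have nonzero intrinsic digits at earlier positions $c'\in\M$, $c'<c$, so the digit of $a+d_i$ at position $c$ is \emph{not} simply $a_c+u_{i,c}$ unless you have already established a no-carry principle. Passing to $3^{-mt}d_i$ does not help directly, because you then need to control $a_{c+mt}$, which is exactly the unknown you are trying to pin down. Similarly, your argument for (i) only produces an out-of-range formal digit when $U_\Phi(c_1)$ and $U_\Phi(c_2)$ have the \emph{same} sign; the opposite-sign case requires a separate contradiction via (iv), which again presupposes the addition principle.

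What is missing is precisely the content of Proposition~\ref{pro-4.1} (the addition principle modulo the exceptional set $\Gamma$) together with Lemma~\ref{lem-5.2}, which uses the $\phi_1$-iteration you mention to show $(\ba,\bu)\notin\Gamma$ whenever $a+\F_\Phi\subseteq\C$. Once those are in place, the paper packages everything into Theorem~\ref{thm-1.3}, and the necessity in Theorem~\ref{thm-1.4} follows in a few lines by reading off columns of $V(1^{n-1}i)$ and $V(j1^{n-1}i)$ --- no induction on $\M$ is needed. Your plan in the final paragraph is the right one; what remains is to actually prove the no-carry lemma rather than invoke it.
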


According to the above theorems \ref{thm-1.4}, every pair $(a, \Phi)\in \C\times \cF_m^+$ satisfying  $a+\F_\Phi\subseteq \C$ can be obtained as follows.  Choose a finite set $\M\subset \N$ such that any two numbers in $\M$
are incongruent modulo $m$. Let $k\geq 2$. Construct an $k\times \infty$ matrix $U$ with entries in $\left\{0, 2, \overline{2}\right\}$ such that the first row of $U$ consists of the entries $0$ only,  all the other rows are the sequences in $\Theta$ (see \eqref{e-e6*} for the definition of $\Theta$) and moreover, $U$ (instead of $U_\Phi$) fulfils the conditions (ii)-(iii) in Theorem \ref{thm-1.4}. There are at most $2^{km}$  different such matrices $U$ once $\M$ has been fixed. For $i=1,\ldots, k$, let $d_i$ be the number in $[0,1]$ with intrinsic ternary expansion given by the $i$-th row of $U$. Choose $a=\sum_{n=1}^\infty a_n 3^{-n}\in \C$ such that  $(a_n)$ satisfies (iv) of Theorem \ref{thm-1.4}. If the cardinality of $\M$ equals $m$,  there are finitely many choices for such $a$; otherwise if the cardinality is less than $m$, there are uncountable many choices for $a$.  Let $\Phi=\{\phi_i(x)= 3^{-m}x+d_i\}_{i=1}^k$. Then $a+\F_\Phi\subseteq \C$. Below we give a simple example.

\begin{ex} Let $m=4$ and $k=4$.  Set $\M=\{2,3, 5\}$. Construct a $4\times \infty$ matrix $U$ as below:
  $$
U=\left(\begin{array}{lllllllllll}
 0 & 0 & 0 & 0 & 0 & 0  &\cdots\\
0& 0& 2 & 0 & 0 & 0  &\cdots\\
0 & 2 & 2 & 0 & \overline{2} & 0  &\cdots\\
0 & 2 & 0 & 0 & \overline{2} & 0 &\cdots\\
\end{array}
 \right),
$$
where the $n$-th column $U(n)$ of $U$ is null  for $n\geq 6$.
One can check that each row of $U$ belongs to $\Theta$, $U(n)$ is either positive  or negative for $n\in \M$, and $U(n)$ is null  for $n\in \N\backslash \M$. Thus the conditions (i)-(iii) in Theorem \ref{thm-1.4} are fulfilled for $U$. Notice that  $U=U_\Phi$ for
$$\Phi=\left\{3^{-4}x, \;3^{-4}x+.002, \; 3^{-4}x+.0220\overline{2},\; 3^{-4}x+.0200\overline{2}\right\},$$
where the translation parts in the above maps are written in its intrinsic ternary  expansions.
Let  $a=.(2002)^\infty$. Then the condition (iv) in Theorem \ref{thm-1.4} is also fulfilled. Hence
by Theorem \ref{thm-1.4}, $a+\F_\Phi\subseteq \C$. Moreover, by Theorem \ref{thm-1.4}, for $b=\sum_{n=1}^\infty b_n3^{-n}\in \C$ with $(b_n)_{n=1}^\infty\in \{0,2\}^\N$,
$$
b+\F_\Phi\subseteq \C \Longleftrightarrow b_{4n+2}=0, \; b_{4n+3}=0,\; b_{4n+5}=2 \mbox{ for each integer }n\geq 0.
$$

\end{ex}

Next we consider  more general pairs $(a, \Phi)\in \C\times (\cF_m\backslash \cF^+_m)$. Let $$\Phi=\{\phi_i(x)=s_i 3^{-m}x+d_i\}_{i=1}^k \in \f_m$$
so that there exists $1\leq k'<k$ such that $s_i=1$ for $1\leq i\leq k'$ and $s_i=-1$ for $i>k'$.  Write
\begin{equation}
\label{e-t1}
\begin{split}
&{\bf e}_0:=(\underbrace{0,\ldots, 0}_{k})^T,\\
&{\bf e}_1:=(\underbrace{0,\ldots, 0}_{k'}, \underbrace{2,\ldots, 2}_{k-k' \;})^T,
\\
&{\bf e}_2:=(\underbrace{0,\ldots, 0}_{k' \;}, \underbrace{\overline{2},\ldots, \overline{2}}_{k-k' })^T.
\end{split}
\end{equation}

\begin{thm}
\label{thm-1.7}Let $m\in \N$ and  $(a, \Phi)\in \C\times (\cF_m\backslash \cF^+_m)$. Write $a=\sum_{n=1}^\infty a_n 3^{-n}$ with $a_n\in \{0,2\}$. Define $U_\Phi$ as in \eqref{e-1.6}. Then
 $a+\F_\Phi\subseteq \C$ if and only if there exists a finite non-empty set $\M\subset \N$ so that the following properties (i)-(iv) hold:
\begin{itemize}
\item[(i)]
Any two numbers in $\M$
are incongruent modulo $m$;
\item[(ii)] For each $n\in \M$,  the $n$-th column $U_\Phi(n)$ of $U_\Phi$  is either positive or negative,  and
for any $t\geq 0$,
 $$U_\Phi(n+m(t+1))=\left\{\begin{array}{ll}  {\bf e}_1, & \mbox{  if $U_\Phi(n)$ is  positive}, \\
  {\bf e}_2, &\mbox{ if $U_\Phi(n)$ is  negative};
  \end{array}
  \right.
  $$
\item[(iii)] For each $n\in \N\backslash (\M+(m\N\cup\{0\}))$,   $U_\Phi(n)={\bf e}_0$.
\item[(iv)] For any $n\in \M$, and any integer $t\geq 0$,
$$a_{n+mt}=\left\{\begin{array}{ll}
0,  & \mbox{ if $U_\Phi(n)$ is positive},\\
2,  & \mbox{ if $U_\Phi(n)$ is  negative}.
\end{array}
\right.
$$

\end{itemize}
\end{thm}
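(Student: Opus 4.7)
The plan is to separate the argument into \emph{sufficiency} (a direct ternary-digit computation paralleling Theorem~\ref{thm-1.4}) and \emph{necessity} (a reduction to Theorem~\ref{thm-1.4} applied to the positive sub-IFS, plus a new argument pinning down the bottom $k-k'$ rows of $U_\Phi$ at positions in $\M+m\N_{\ge 1}$). Throughout I would write $\sigma_n=+1$ (resp.\ $-1$) when $U_\Phi(n)$ is positive (resp.\ negative), so that condition (iv) reads $a_{n+mt}=1-\sigma_n$.

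For sufficiency, I would verify $a+\phi_\omega(0)\in\C$ for every finite word $\omega=(i_1,\ldots,i_n)$; closedness of $\C$ and the fact that $\F_\Phi$ is the closure of the $\phi_\omega(0)$'s then deliver $a+\F_\Phi\subseteq\C$. Unrolling $\phi_\omega(0)=\sum_{t=0}^{n-1}\epsilon_t\,3^{-mt}d_{i_{t+1}}$ with $\epsilon_t=\prod_{j=1}^{t}s_{i_j}$ and substituting the intrinsic expansions of the $d_{i_{t+1}}$'s, the formal coefficient of $3^{-N}$ in $a+\phi_\omega(0)$ is $c_N=a_N+\sum_{t}\epsilon_t u_{i_{t+1},N-mt}$. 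By (iii) we get $c_N=a_N\in\{0,2\}$ whenever $N\notin\M+m\N_{\ge 0}$. When $N=n_0+mT$ with $n_0\in\M$, (i) restricts the sum to $0\le t\le T$, and (ii) fixes the interior contributions to equal $2\sigma_{n_0}\epsilon_t$ exactly when $i_{t+1}>k'$; combining the identity $\sum_{t<T,\, s_{i_{t+1}}=-1}\epsilon_t=(1-\epsilon_T)/2$ with $a_N=1-\sigma_{n_0}$ from (iv) collapses the sum to $c_N=1+\epsilon_T(u_{i_{T+1},n_0}-\sigma_{n_0})\in\{0,2\}$ (since $u_{i_{T+1},n_0}\in\{0,2\sigma_{n_0}\}$). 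Every $c_N$ lies in $\{0,2\}$ with no carries, so $a+\phi_\omega(0)\in\C$.

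For necessity, suppose $a+\F_\Phi\subseteq\C$. First I would apply Theorem~\ref{thm-1.4} to the positive sub-IFS $\Phi_+:=\{\phi_i:1\le i\le k'\}\in\cF_m^+$, handling the degenerate cases (where $\Phi_+$ contains fewer than two non-constant maps) by instead choosing $\M$ from a negative map and arguing analogously. Since $\F_{\Phi_+}\subseteq\F_\Phi$, this yields a finite set $\M\subset\N$ obeying (i), the positivity or negativity of the truncated columns $(u_{1,n},\ldots,u_{k',n})^T$ for $n\in\M$, the vanishing $u_{i,n}=0$ for $i\le k'$ and $n\notin\M$, and the period condition (iv) for $a$. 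To promote this to the full theorem, I would use the further test points $\phi_i^{t+1}(0)=d_i\sum_{s=0}^{t}(-3^{-m})^s$ and $\phi_j\circ\phi_i^{t+1}(0)$ (with $i>k'$, $j\le k'$), all of which lie in $\F_\Phi$. Requiring each such point translated by $a$ to lie in $\C$, and invoking uniqueness of the intrinsic expansion on $\Theta$, an induction on $t$ forces $u_{i,n_0+m(t+1)}=2\sigma_{n_0}$ for every $i>k'$, $n_0\in\M$, $t\ge 0$ and $u_{i,N}=0$ for $N\notin\M+m\N_{\ge 0}$; the parallel analysis with $\phi_j\circ\phi_i^{t+1}(0)$ forces the top $k'$ rows to vanish at positions in $\M+m\N_{\ge 1}$. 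Together these give conditions (ii) and (iii) in full.

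The main obstacle will be the inductive digit-extraction step for the negative rows: one must show that each shifted digit $u_{i,n_0+m(t+1)}$ is pinned to the single value $2\sigma_{n_0}$ uniformly in $t$ (not merely constrained to $\{0,2\sigma_{n_0}\}$), while carefully controlling potential carries in the ternary expansions of the iterates $\phi_i^{t+1}(0)$. The rigidity should come from the sufficiency-style analysis already forcing every coefficient of $a+\phi_i^{t+1}(0)$ to lie in $\{0,2\}$ without carries, so any deviation from the prescribed pattern would propagate down the ternary expansion and eventually violate the inclusion in $\C$ at some position.
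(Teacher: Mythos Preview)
Your sufficiency argument is essentially correct and is in fact a more explicit version of what the paper does: the paper routes the same computation through its matrix criterion (Theorem~\ref{thm-1.3}), but your telescoping identity $\sum_{t<T,\,s_{i_{t+1}}=-1}\epsilon_t=(1-\epsilon_T)/2$ and the resulting formula $c_N=1+\epsilon_T(u_{i_{T+1},n_0}-\sigma_{n_0})$ are exactly what is hidden inside that criterion. One small omission: when the word $\omega$ is shorter than $T+1$ letters you do not reach the $t=T$ term; the same telescoping gives $c_N=1-\sigma_{n_0}\epsilon_{|\omega|}\in\{0,2\}$, so this is easily patched.

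The necessity argument, however, has a real gap. Applying Theorem~\ref{thm-1.4} to $\Phi_+=\{\phi_i:i\le k'\}$ only produces the set $\M_+=\{n:\exists\,i\le k',\ u_{i,n}\neq 0\}$, and this can be strictly smaller than the $\M$ required here: there may be residue classes modulo $m$ in which the top $k'$ rows of $U_\Phi$ vanish identically but some bottom row does not. For those classes you have neither a candidate element of $\M$, nor a value of $\sigma_{n_0}$, nor the periodicity of $(a_{n_0+mt})_t$ from Theorem~\ref{thm-1.4}; the phrase ``arguing analogously from a negative map'' does not supply these, since Theorem~\ref{thm-1.4} is stated only for positive IFSs. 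The paper avoids this by defining $\M$ directly from the full matrix $U_\Phi$ (as the first non-zero column in each residue class) and then running the argument entrywise.

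Second, and more seriously, your inductive step ``$a+\phi_i^{t+1}(0)\in\C$ forces $u_{i,n_0+m(t+1)}=2\sigma_{n_0}$'' presupposes exactly the no-carry principle you flag as the main obstacle. Knowing that $a+\phi_i^{t+1}(0)\in\C$ tells you a real number lies in $\C$; it does not by itself say that the \emph{formal} digit $a_N+\sum_s(-1)^s u_{i,N-ms}$ lies in $\{0,2\}$ for each $N$. Extracting digit-by-digit constraints requires ruling out carries, which is the content of the paper's addition and subtraction principles (Proposition~\ref{pro-4.1}) and the exceptional-set analysis of $\Gamma,\Gamma'$; the ``uniqueness of the intrinsic expansion on $\Theta$'' you invoke is not enough on its own. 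The paper packages all of this into Theorem~\ref{thm-1.3} and then reads off the constraints by looking at the columns of $V(1^t i)$ and $V(j1^{n-1}i)$---which are precisely your test points $\phi_1^t\circ\phi_i(0)$ and $\phi_j\circ\phi_1^{n-1}\circ\phi_i(0)$, but with the carry issue already handled. To close your argument you would either need to reprove Proposition~\ref{pro-4.1} or simply appeal to Theorem~\ref{thm-1.3} and choose the same words the paper does.
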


 Similar to the remark after Theorem \ref{thm-1.4},  we see that  Theorem \ref{thm-1.7}  provides us a simple algorithm to generate all the possible pairs $(a, \Phi)\in \C\times (\cF_m\backslash \cF^+_m)$ so that  $a+\F_\Phi\subseteq \C$.   Below we give an example.

\begin{ex} Let $m=3$, $k'=2$ and $k=4$.  Set $\M=\{1,2,3\}$. Construct a $4\times \infty$ matrix $U$ as below:
  $$
U=\left(\begin{array}{lllllllllll}
 0 & 0 & 0 & 0 & 0 & 0 & 0 & 0 & 0 &\cdots\\
2& 0& 2 & 0 & 0 & 0 & 0 & 0 & 0 &\cdots\\
0 & 0 & 2 & 2 & \overline{2} & 2 & 2 & \overline{2} & 2 &\cdots\\
2 & \overline{2} & 0 & 2 & \overline{2} & 2 & 2 & \overline{2} & 2 &\cdots\\
\end{array}
 \right),
$$
where the sequence  $(U(n))_{n\geq 3}$ of the columns in $U$ is  periodic with period $3$.
Let $a=.(020)^\infty$. Then $\M$, $U$ and $a$ satisfy the conditions (i)-(iv) in Theorem \ref{thm-1.7}. Notice that  $U=U_\Phi$ for
$$\Phi=\left\{3^{-3}x, 3^{-3}x+.202, -3^{-3}x+.002(2\overline{2}2)^\infty, -3^{-3}x+.2\overline{2}0(2\overline{2}2)^\infty\right\}.$$
By Theorem \ref{thm-1.7}, $a+\F_\Phi\subseteq \C$.
\end{ex}

So far,  we have given a simple and complete characterization  of those pairs $(a, \Phi)\in \C\times \cF_m$ satisfying $a+\F_\Phi\subseteq \C$ (Theorems \ref{thm-1.4} and \ref{thm-1.7}). In Section~\ref{S-6}, we will extend this result to the pairs $(a, \Phi)\in \C\times \cF$. Indeed,
 when $\cF_m$ is replaced by $\cF$,  we will show that  if  $a+\F_\Phi\subseteq \C$, then $(U_\Phi(n))_{n\geq 1}$ is eventually periodic; furthermore, we can provide a finite algorithm to  find all  the pairs $(a, \Phi)\in \C\times \cF$ satisfying  $a+\F_\Phi\subseteq \C$, once the ratios of the maps in $\Phi$ are
 pre-given (Theorems \ref{thm-7.1} and \ref{thm-7.2}).

The paper is organized as follows. In Section~\ref{S-2}, we prove Theorem \ref{thm-1.1}. In Section~\ref{S-3}, we provide the so-called addition and subtraction principles to judge whether
$a+ b\in \C$ (or $a-b\in \C$) for given $a\in \C$ and $b\in [0,1]$, by using the intrinsic ternary expansions of $a,b$. In Section~\ref{S-4}, we use them to formulate a criterion to judge
whether $a+\F_\Phi\subseteq \C$ for given $(a, \Phi)\in \C\times \cF$, by investigating certain compatible properties of the intrinsic matrix $U_\Phi$  and the  intrinsic ternary expansion of $a$.  The proofs of our main results are given in  Sections~\ref{S-5} and  \ref{S-6}.  In  Section~\ref{S-7} we give some generalizations and remarks.  A needed known result in number theory and some elementary facts about the Cantor set $\C$ are given in  Appendix \ref{sec-appendix}.


\section{The proof of Theorem \ref{thm-1.1}}
\label{S-2}

Recall that the Cantor set $\C$ is obtained in the following way. Start with the unit interval $K_0=[0,1]$. Divide it into three equal sections and remove the open middle third. Thus we have $K_1=[0, 1/3]\cup[2/3,1]$. Then we continue inductively to obtain a sequence of closed sets $(K_n)_{n=1}^\infty$, so that  $K_n$ consists of $2^n$ closed intervals of length $3^{-n}$ obtained by removing the middle (open) one-third of each intervals in $K_{n-1}$. Finally the Cantor set is given by $\C=\bigcap_{n=0}^\infty K_n$. For $n\geq 0$, each interval in $K_{n}$ is called an {\it $n$-th basic interval}.

Before proving Theorem \ref{thm-1.1}, we shall first prove the following lemma.

\begin{lem}
\label{lem-2.2}
Assume that $\F\subseteq \C$ is a non-trivial self-similar set, generated by
a linear  IFS $\Phi=\{\phi_i\}_{i=1}^k$  on $\R$. Then we have $\log |\rho_i|/\log 3\in \Q$ for each $1\leq i\leq k$, where $\rho_i$ denotes the contraction ratio of $\phi_i$.
\end{lem}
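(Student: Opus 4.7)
The plan is to argue by contradiction: suppose $\alpha := -\log|\rho_i|/\log 3 \notin \Q$ for some $i$. By replacing $\phi_i$ with $\phi_i\circ\phi_i$ if necessary (its ratio $\rho_i^2>0$ still has irrational $\log$-over-$\log 3$), one may assume $\rho := \rho_i \in (0,1)$. Write $\phi := \phi_i$, $L := \mathrm{diam}(\F) > 0$, $u_L := \min \F$, and $\F_{0} := (\F - u_L)/L \subseteq [0,1]$; then $\{0,1\} \subseteq \F_{0}$. Since $\phi(\F)\subseteq\F\subseteq\C$, iterating yields $\phi^n(\F) \subseteq \C$ with diameter $\rho^n L$. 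The plan exploits the fundamental gap property of $\C$: distinct level-$m$ basic intervals are separated by gaps of width at least $3^{-m}$. For each $n$, let $m_n$ be the largest integer for which $\phi^n(\F)$ lies in a single level-$m_n$ basic interval $[c_n,\,c_n+3^{-m_n}]$ of $\C$. Maximality combined with the gap property forces
\[
3^{-m_n-1} \;\leq\; \rho^n L \;\leq\; 3^{-m_n}.
\]
Rescaling by $3^{m_n}$ and invoking the self-similarity $\C\cap[c_n,c_n+3^{-m_n}] = c_n + 3^{-m_n}\C$, the set
\[
\F_n \;:=\; 3^{m_n}\bigl(\phi^n(\F) - c_n\bigr) \;=\; x_n + t_n\F_{0}
\]
lies in $\C$, with $t_n := 3^{m_n}\rho^n L \in [1/3, 1]$ and $x_n := \min\F_n$. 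Maximality of $m_n$ also forces $\F_n$ to meet both level-$1$ basic intervals of $\C$; together with $\C\cap(1/3,2/3)=\emptyset$, this gives $x_n \in [0,1/3]\cap\C$ and $x_n+t_n \in [2/3,1]\cap\C$.

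A short computation yields $t_n = 3^{-\{n\alpha + c\}}$, where $c := -\log L/\log 3$ and $\{\cdot\}$ denotes the fractional part. Since $\alpha$ is irrational, Weyl's equidistribution theorem implies $\{t_n\}_{n\ge 1}$ is dense in $(1/3, 1]$. Extract a subsequence $\{n_j\}$ with $t_{n_j}\to 1/3$; the bounded sequence $x_{n_j}\in[0,1/3]$ admits a further subsequence with $x_{n_j}\to x^*$. The closed constraint $x_{n_j}+t_{n_j}\in[2/3,1]$ then forces $x^*+1/3\in[2/3,1]$, hence $x^*=1/3$. Hausdorff continuity of affine images of the compact set $\F_{0}$, together with closedness of $\C$, yields
\[
\tfrac13 + \tfrac13\F_{0} \;=\; \lim_j\bigl(x_{n_j} + t_{n_j}\F_{0}\bigr) \;\subseteq\; \C.
\]

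The contradiction now follows at once: the limit set lies in $[1/3, 2/3]$ since $\F_{0}\subseteq[0,1]$, while $\C\cap[1/3, 2/3] = \{1/3, 2/3\}$ because $(1/3, 2/3)$ is the first middle third removed in the construction of $\C$. Hence $\F_{0}\subseteq\{0,1\}$, so $\F=\{u_L, u_R\}$ has only two points. But no two-point set is the attractor of a linear IFS: a similitude $x\mapsto rx+d$ with $0<|r|<1$ mapping $\{u_L, u_R\}$ into itself would force $r\in\{-1,0,1\}$, which is impossible. The main obstacle will be verifying the precise two-sided bound $3^{-m_n-1}\leq \rho^n L\leq 3^{-m_n}$ from the maximality of $m_n$ and the exact gap structure of $\C$, together with the positional conclusion that $\min\F_n\in[0,1/3]\cap\C$ and $\max\F_n\in[2/3,1]\cap\C$; once these structural facts are in place, the Weyl equidistribution step, the Hausdorff-metric limit, and the elementary identity $\C\cap[1/3,2/3]=\{1/3,2/3\}$ finish the argument.
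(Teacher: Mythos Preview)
Your argument is correct and takes a genuinely different route from the paper's proof. The paper fixes the unique fixed point $a$ of $\phi$ and one other point $b\in\F$, observes that $a+\rho^{2n}(b-a)\in\C$ for all $n$, and then uses only the density of $\{2n\log|\rho|+m\log 3:\,n,m\in\N\}$ in $\R$ to locate a scale at which $\rho^{2n}d$ barely overshoots $3^{-m}$; a further iterate then lands in the adjacent gap of $\C$, giving the contradiction directly. Your approach instead iterates $\phi$ on the whole set $\F$, renormalises into $\C$ at the scale of the smallest basic interval containing $\phi^n(\F)$, and uses Weyl equidistribution together with a Hausdorff-limit argument to force an affine copy of $\F$ into $\C\cap[1/3,2/3]=\{1/3,2/3\}$, whence $\F$ has two points, which you then rule out. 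The paper's argument is slightly more elementary (no Hausdorff metric, no equidistribution, only two points of $\F$ are ever used), while yours is more geometric and yields a clean structural dichotomy in one stroke. Two minor remarks: the squeeze $2/3-t_{n_j}\le x_{n_j}\le 1/3$ already forces $x_{n_j}\to 1/3$ without passing to a further subsequence; and your final ``main obstacle'' paragraph merely restates what you have already verified, so it can be dropped.
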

\begin{proof}
 Fix  $\phi$ in $\Phi$.  Let $\rho$ be the contraction ratio of $\phi$. We prove that $\log |\rho|/\log 3\in \Q$.

 Let $a$ be the fixed point of $\phi$. Then
$\phi(x)=a+\rho (x-a)$.
Let $\F_{\Phi}$ denote the attractor of $\Phi$. Clearly, $a\in \F_{\Phi}$. Take $b\in \F_{\Phi}$ such that $b\neq a$. Observing  that $\phi^n(b)\in \F_{\Phi}\subseteq \C$ and $\phi^n(b)=a+\rho^n(b-a)$ for any $n\in \N$, we have
$$
a+\rho^{2n}(b-a)\in \C \mbox{ and }
1-a-\rho^{2n}(b-a)\in \C,\qquad \forall\; n\in \N.
$$
(Observe  that $\C$ is symmetric in the sense $1-\C=\C$).  Hence there exist $c\in \C$ and $d>0$ such that
$$
c+\rho^{2n}d\in \C,\quad  \forall\; n\in \N.
$$

Suppose on the contrary that $\log |\rho|/\log 3\not\in \Q$. We will derive a contradiction. Pick  $0<\epsilon<\rho^2$. Since $\log |\rho|/\log 3\not\in \Q$, the set $\{2n \log |\rho| +m \log 3:\; n, m\in \N\}$ is dense in $\R$. Therefore, we can find $n,m\in \N$ such that
$$
2n\log |\rho|+m\log 3 +{\log d}\in \left(0, \log (1+\epsilon)\right),
$$
that is,
 $\rho^{2n} 3^m d\in (1,1+\epsilon)$. We rewrite it as follows.
\begin{equation}
\label{e-2.1}
3^{-m}<\rho^{2n}d <(1+\epsilon) 3^{-m}.
\end{equation}
Since $0<\epsilon<\rho^2$, we have $1/\epsilon> \rho^{-2}$. This, combining with the first inequality in \eqref{e-2.1}, implies that there exists $p\in \N$ such that
\begin{equation}
\label{e-r11}
3^{-m}\epsilon \leq \rho^{2(n+ p)}d< 3^{-m}.
\end{equation}
Let $I$ denote the $m$-th basic interval containing the point $c$, and $J$ the $m$-th basic interval containing the point $c+\rho^{2n}d$.
Due to  \eqref{e-2.1},
$I$ and $J$ are two different basic intervals with a gap of length $3^{-m}$; then the second inequality in \eqref{e-2.1} implies that, the distance between $c$ and the right endpoint of $I$ is less than $ 3^{-m}\epsilon$.
Using this information and \eqref{e-r11}, we see that  the point $c+\rho^{2(n+ p)}d$ must be located in the gap between $I$ and $J$, contradicting the fact that $c+\rho^{2(n+ p)}d\in \C$.
\end{proof}

To prove Theorem  \ref{thm-1.1}, we need to use a  result of Salem and Zygmund \cite{SaZy55, Sal63} in the  theory of
  trigonometric series.
Let us consider a trigonometric series
$$
\sum_{n=0}^\infty (a_n\cos nx+ b_n\sin nx),
$$
where the variable $x$ is real. In one of his pioneer works,  Cantor (cf. \cite{Zyg02}) showed that if this series converges everywhere to zero, it should vanish identically, i.e., $a_n=b_n=0$ for all $n$. This work leads to the following.

\begin{de}
A subset $\E$ of the circle $[0, 2\pi)$ is called a set of uniqueness,  if any trigonometric expansion
$$
\sum_{n=0}^\infty (a_n\cos nx+ b_n\sin nx),
$$ which converges to zero for $x\in [0,2\pi)\backslash \E$,  is identically zero; that is, $a_n=b_n=0$ for all $n$.

\end{de}

It is an unsolved fundamental problem to classify all sets of uniqueness.  The following significant result is due to Salem and Zygmund.
\begin{thm}
[cf. Chapter VI of  \cite{Sal63}]
\label{thm-2.3}
 Let $\E$ be a self-similar subset of $(0, 2\pi)$  generated by an IFS
 $\{\rho x+a,\rho x+b\}$ with $0<|\rho|<1/2$, $a\neq b$.
Then a necessary and sufficient condition for $\E$ to be a set of uniqueness is that $1/|\rho|$ is a Pisot number, i.e.  an algebraic integer whose algebraic conjugates are all inside the
unit disk.
\end{thm}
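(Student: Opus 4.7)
The plan is to follow the classical Salem--Zygmund program, which converts the $U$-set question into a Fourier-analytic condition on measures supported on $E$ and exploits the Diophantine dichotomy between Pisot and non-Pisot values of $\theta:=1/|\rho|$. After an affine change of variable, assume the IFS is $\phi_1(x)=\rho x$, $\phi_2(x)=\rho x+(1-\rho)$, so $E$ is the symmetric perfect set with dissection ratio $|\rho|$. Let $\mu$ be the self-similar probability measure on $E$ with equal weights; by direct recursion,
\[
\hat{\mu}(\xi) \;=\; e^{-i\xi/2}\prod_{n=0}^{\infty}\cos\!\left(\frac{(1-\rho)\rho^n\xi}{2}\right).
\]
By the Piatetski-Shapiro framework, $E$ is a set of uniqueness if and only if $E$ supports no nonzero pseudofunction, i.e.\ no distribution whose integer Fourier coefficients vanish at infinity.

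For the direction \emph{Pisot $\Rightarrow$ $U$-set}, Pisot's theorem provides $\|\theta^k\| = O(\lambda^k)$ for some $\lambda \in (0,1)$. Substituting $\xi_N := 4\pi\theta^N/(1-\rho)$ into the Riesz product, each factor becomes $\cos(2\pi\theta^{N-n})$: for $n \leq N$ the argument is exponentially close to an integer multiple of $2\pi$, while for $n > N$ the quantity $\theta^{N-n}$ is itself exponentially small. Hence $|\hat{\mu}(\xi_N)|$ converges as $N \to \infty$ to a strictly positive limit, so $\mu$ is not Rajchman. Upgrading this from ``the natural measure is not Rajchman'' to the stronger assertion ``no pseudofunction is supported on $E$'' uses the self-similarity functional equation $\hat{\mu}(\xi) = \cos((1-\rho)\xi/2)\,\hat{\mu}(\rho\xi)\,e^{-i\xi/2}$; iterating and combining with the geometric decay $\|\theta^N\|\to 0$ rules out every possible cancellation among distributional components on $E$. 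For the direction \emph{non-Pisot $\Rightarrow$ $M$-set}, one constructs an explicit nonzero pseudofunction on $E$: depending on the arithmetic of $\theta$, either $\mu$ itself or a signed distribution built from symmetric shifts of $\mu$ has integer Fourier coefficients tending to zero, witnessing the failure of uniqueness.

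The genuinely difficult step is the Pisot-direction upgrade from ``$\mu$ is not Rajchman'' to ``no pseudofunction on $E$ is Rajchman''; this is the substance of Chapter~VI of \cite{Sal63} and uses essentially that Pisot numbers are the only real algebraic integers $\theta > 1$ for which $\|\theta^k\|$ decays geometrically, together with a delicate decomposition of an arbitrary distribution supported on $E$ into components compatible with the self-similarity. The non-Pisot direction is comparatively concrete once the appropriate Riesz-product decay estimates are in hand.
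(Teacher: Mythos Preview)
The paper does not prove this theorem; it is quoted as a classical result of Salem and Zygmund with a citation to Chapter~VI of Salem's monograph, and is used as a black box in the proof of Theorem~\ref{thm-1.1}. So there is no ``paper's own proof'' to compare against. Your outline correctly identifies the architecture of the original argument---reduction to the symmetric perfect set, the Riesz-product formula for the self-similar measure, and the Pisot/non-Pisot dichotomy combined with the Piatetski-Shapiro criterion---and in that sense it matches the cited source rather than offering an alternative route.

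As a proof, however, your proposal is a sketch with explicitly acknowledged gaps. In the Pisot direction you correctly show that the natural measure is not Rajchman along a specific real sequence $\xi_N$, but the passage from ``$\mu$ is not Rajchman'' to ``no pseudofunction supported on $E$ has integer Fourier coefficients tending to zero'' is exactly the substance of the Salem--Zygmund/Piatetski-Shapiro argument, and you defer it wholesale to the reference; the one sentence about ``iterating the functional equation'' does not constitute an argument. In the non-Pisot direction you assert that ``either $\mu$ itself or a signed distribution'' is Rajchman without specifying which, and without the actual estimate on the infinite product $\prod_n |\cos(\pi(1-\rho)\rho^n u)|$ that forces it to $0$ as $u\to\infty$; this estimate (due to Salem) is the nontrivial content of that direction. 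A minor additional point: your affine normalization tacitly assumes $\rho>0$, whereas the statement allows $\rho<0$; this is easily repaired by passing to the second iterate of the IFS, but should be said.
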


 According to the above result, the Cantor set  $\C$ is a set of uniqueness.
 Of course, by definition, every subset of $\C$ is  a set of uniqueness.

\begin{proof}[Proof of Theorem  \ref{thm-1.1}]  By Lemma \ref{lem-2.2}, for each $1\leq i\leq k$,  the contraction ratio of $\phi_i$ is of the form $\pm 3^{-p_i}$, where $p_i\in \Q$.
Here we need to prove that $p_i\in \N$.
Without loss of generality, we  prove that $p_1\in \N$.
Since the attractor of $\Phi$ is not a singleton, there exists $i>1$ such that $\phi_i$ and $\phi_1$ have different fixed points.
Then the two maps $\phi_1\circ \phi_i$ and $\phi_i\circ \phi_1$ are
different and they have the same contraction ratio $\rho$.
Since the attractor generated by $\{\phi_1\circ \phi_i,\; \phi_i\circ \phi_1\}$ is a subset of $\C$,
it is a set of uniqueness. By Theorem  \ref{thm-2.3}, $1/|\rho|$ is a Pisot number. Since $1/|\rho|=3^{p_1+p_i}$ and $p_1+p_i\in \Q$, to guarantee that $3^{p_1+p_i}$ is a Pisot number,
we must have $p_1+p_i\in \N$. Similarly,  considering the maps $\phi_1\circ (\phi_i^2)$ and $(\phi_i^2)\circ \phi_1$, we also have $p_1+2p_i\in \N$. This forces $p_1\in \N$.
\end{proof}

\section{Addition and subtraction principles}
\label{S-3}

In this section we consider the following basic question:  given $a\in \C$ and $b\in [0,1]$, how can we judge whether $a+b\in \C$ or $a-b\in \C$? As an answer, using the intrinsic ternary expansions we establish the so called {\it addition and subtraction principles} illustrated by Lemma \ref{lem-admissible} and Proposition \ref{pro-4.1}.  
\begin{de}
\label{de-4.1}
{
 For $\ba=(a_n)_{n=1}^\infty\in \{0,2\}^\N$ and $\bu=(u_n)_{n=1}^\infty\in \left\{0,2, \overline{2}\right\}^\N$,  say that $(\ba, \bu)$ is { plus-admissible} if  $a_n+u_n\in \{0,2\}$  for each $n$; in this case we define
 $$\ba\oplus \bu=(a_n+u_n)_{n=1}^\infty.$$
 Similarly,   say that $(\ba,\bu)$ is {minus-admissible} if $(\ba, \overline{\bu})$ is plus-admissible, where
 \begin{equation}
 \label{u-4.2}
 \overline{\bu}=(\overline{u_n})_{n=1}^\infty
 \end{equation} with  $\overline{i}=-i$  for $i\in \{0,2,\overline{2}\}$.}
\end{de}

\begin{rem}
\label{rem-4.1}
{\rm
It is direct to check that  if $(\ba,\bu)\in \{0,2\}^\N\times \left\{0,2, \overline{2}\right\}^\N$ is plus-admissible,  then $(\ba\oplus \bu, \overline{\bu})$ is plus-admissible, i.e., $(\ba\oplus \bu, \bu)$ is minus-admissible, and  $\ba=(\ba\oplus \bu) \oplus \overline{\bu}$.
}
\end{rem}

Define  $\pi: \left\{0,2, \overline{2}\right\}^\N\to [-1,1]$ by
\begin{equation}
\label{e-e1}
\pi(\bu)=\sum_{n=1}^\infty u_n3^{-n}, \qquad \forall \, \bu=(u_n)_{n=1}^\infty.
\end{equation}

The following result directly follows from the above definitions.
\begin{lem}
\label{lem-admissible}
Let  $\ba\in \{0,2\}^\N$ and $\bu\in \left\{0,2, \overline{2}\right\}^\N$.  Define $ \overline{\bu}$ as in \eqref{u-4.2}. Then the following properties hold:
\begin{itemize}
\item[(i)]
If $(\ba,\bu)$ is plus-admissible, then  $$\pi(\ba)+\pi(\bu)=\pi (\ba\oplus \bu)\in \C.$$
\item[(ii)]
If $(\ba,\bu)$ is minus-admissible, then  $$\pi(\ba)-\pi(\bu)=\pi (\ba\oplus \overline{\bu})\in \C.$$
\end{itemize}
\end{lem}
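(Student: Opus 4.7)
The proof is essentially a direct unwinding of the definitions, so my plan is simply to trace through how the entrywise operations on sequences interact with the evaluation map $\pi$.

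For part (i), I would start by observing that plus-admissibility forces $a_n + u_n \in \{0,2\}$ for every $n$, so the sequence $\ba \oplus \bu$ lies in $\{0,2\}^\N$. Then the $\R$-linearity of $\pi$ gives
\[
\pi(\ba \oplus \bu) = \sum_{n=1}^\infty (a_n + u_n) 3^{-n} = \sum_{n=1}^\infty a_n 3^{-n} + \sum_{n=1}^\infty u_n 3^{-n} = \pi(\ba) + \pi(\bu).
\]
Since $\ba \oplus \bu \in \{0,2\}^\N$, the standard ternary characterization of $\C$ (recalled in the discussion preceding Definition~\ref{de-1.1}, and formally collected in the appendix) ensures that $\pi(\ba \oplus \bu) \in \C$, which completes (i).

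For part (ii), the plan is to reduce to (i) via the conjugation $\bu \mapsto \overline{\bu}$. By definition, $(\ba, \bu)$ being minus-admissible means $(\ba, \overline{\bu})$ is plus-admissible, so part (i) applied to the pair $(\ba, \overline{\bu})$ yields $\pi(\ba) + \pi(\overline{\bu}) = \pi(\ba \oplus \overline{\bu}) \in \C$. It only remains to note that $\pi(\overline{\bu}) = -\pi(\bu)$, which is immediate from \eqref{e-e1} since $\overline{u_n} = -u_n$. Substituting gives $\pi(\ba) - \pi(\bu) = \pi(\ba \oplus \overline{\bu}) \in \C$, as desired.

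There is no serious obstacle here; the statement is a bookkeeping lemma whose only content is that termwise admissibility preserves membership in $\{0,2\}^\N$ and that $\pi$ is additive. The one thing I would be mildly careful about is confirming that $\ba \oplus \overline{\bu}$ really does land in $\{0,2\}^\N$ (not merely in $\{0,2,\overline{2}\}^\N$) under the minus-admissibility hypothesis, but this is exactly what the hypothesis is designed to guarantee, via Remark~\ref{rem-4.1}.
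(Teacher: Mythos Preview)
Your proof is correct and is exactly the direct verification the paper has in mind; the paper itself simply asserts that the lemma ``directly follows from the above definitions'' without writing out any details. The only minor remark is that invoking Remark~\ref{rem-4.1} at the end is unnecessary, since plus-admissibility of $(\ba,\overline{\bu})$ already gives $\ba\oplus\overline{\bu}\in\{0,2\}^\N$ by Definition~\ref{de-4.1} directly.
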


\medskip
We remark that the converse of the above lemma is not totally true.  For example,
$\pi(02^\infty)+\pi(02^\infty)=2/3\in \C$, though $(02^\infty, 02^\infty)$ is not plus-admissible; similarly
$\pi(20^\infty)+\pi(0\bar{2}^\infty)=1/3\in \C$, though $(20^\infty, 0\bar{2}^\infty)$ is not plus-admissible.
Nevertheless, we are going to show that except some special cases, the converse of Lemma \ref{lem-admissible} is true.

Let $\sigma$ denote the left shift map on $\left\{0, 2, \overline{2}\right\}^\N$, i.e., $\sigma((u_n)_{n=1}^\infty)=(u_{n+1})_{n=1}^\infty$ for
$(u_n)_{n=1}^\infty\in \left\{0, 2, \overline{2}\right\}^\N$. Recall that $\Theta$  is defined as in \eqref{e-e6*}.
Define $\Gamma, \Gamma'\subset \{0,2\}^\N\times  \Theta$ by
\begin{equation}
\label{e-s1}
\begin{split}
\Gamma=& \{(\ba,\bu):\;\exists k\geq 0\mbox { such that } (\sigma^k\ba, \sigma^k\bu)=(02^\infty, 02^\infty) \mbox{ or } (20^\infty, 0\overline{2}^\infty)\},  \end{split}
\end{equation}
\begin{equation}\label{e-s2}
\begin{split}
\Gamma'=& \{(\ba,\bu):\; \exists k\geq 0\mbox { such that } (\sigma^k\ba, \sigma^k\bu)=(20^\infty,02^\infty) \mbox{ or } (02^\infty, 0\overline{2}^\infty)\}.
\end{split}
\end{equation}

\medskip
The main result of this section is the following.
\begin{pro}
\label{pro-4.1}
Let  ${\bf a}=(a_n)_{n=1}^\infty\in \{0,2\}^\N$ and $\bu=(u_n)_{n=1}^\infty\in \Theta$.
Then we have the following statements:
\begin{itemize}
\item[(i)] Assume that   $(\ba, \bu)\not\in \Gamma$ and $\pi(\ba)+\pi({\bu})\in \C$.
Then the pair $(\ba, \bu)$ is plus-admissible.
\item[ (ii)] Assume that $(\ba, \bu)\not\in \Gamma'$ and $\pi(\ba)-\pi(\bu)\in \C$.
Then the pair $(\ba, \bu)$ is minus-admissible.
\end{itemize}
\end{pro}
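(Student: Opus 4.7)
The plan is to deduce part (ii) from part (i) via the involution $x\mapsto 1-x$ of $\C$, and to prove part (i) by a geometric case analysis driven by the gap structure of $\C$. For the reduction, put $\ba':=(2-a_n)_{n\geq 1}$; then $\pi(\ba')=1-\pi(\ba)$, so $\pi(\ba)-\pi(\bu)\in\C$ iff $\pi(\ba')+\pi(\bu)\in\C$. A direct check of definitions shows that the plus-admissibility of $(\ba',\bu)$ is equivalent to the minus-admissibility of $(\ba,\bu)$, and that $(\ba',\bu)\in\Gamma$ iff $(\ba,\bu)\in\Gamma'$ (the two patterns in $\Gamma'$ are precisely the images of the two patterns in $\Gamma$ under the involution). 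So (ii) follows from (i).

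For (i), argue by contradiction. Suppose $(\ba,\bu)\notin\Gamma$, $\pi(\ba)+\pi(\bu)\in\C$, but the pair is not plus-admissible, and let $n\geq 1$ be the smallest index with $d_n:=a_n+u_n\notin\{0,2\}$, so $d_i\in\{0,2\}$ for $i<n$ and $d_n\in\{-2,4\}$. Write
\[
\pi(\ba)+\pi(\bu)\;=\;P+T_n,\qquad P:=\sum_{i<n}d_i3^{-i},\qquad T_n:=\sum_{i\geq n}d_i3^{-i}.
\]
Since $P$ has digits in $\{0,2\}$, it is the left endpoint of an $(n-1)$-th basic interval of $\C$; and since $|d_i|\leq 4$, the residual tail satisfies $\bigl|\sum_{i>n}d_i3^{-i}\bigr|\leq 2\cdot 3^{-n}$. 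The key is to study which $\C$-points $P+T_n$ can equal, stratified by the value of $d_n$ and by the last digit $d_{n-1}$ of $P$, which governs the $\C$-gap immediately to the right (resp.\ left) of $P$.

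For $d_n=4$, one has $T_n\in[2\cdot 3^{-n},\,2\cdot 3^{-(n-1)}]$, and the gap structure of $\C$ confines $P+T_n$ to one of two possibilities: (a) the $n$-th basic interval $[P+2\cdot 3^{-n},\,P+3^{-(n-1)}]$, in which case self-similarity of $\C$ within this interval reduces the membership condition to $2+S'\in\C$ with $S':=\sum_{j\geq 1}d_{n+j}3^{-j}\in[-1,2]$, forcing $S'=-1$ and hence $a_{n+j}=0$, $u_{n+j}=\overline{2}$ for all $j\geq 1$, so that $\bu$ ends with $2\overline{2}^\infty$ from position $n$, violating $\bu\in\Theta$; or (b) the single $\C$-point $P+2\cdot 3^{-(n-1)}$, reachable only when $d_{n-1}=0$ and requiring every $d_{n+j}=4$, whence $a_i=u_i=2$ for all $i\geq n$ — here $(a_{n-1},u_{n-1})=(2,\overline{2})$ creates a $\overline{2}2^\infty$-tail in $\bu$ (again violating $\Theta$), while $(a_{n-1},u_{n-1})=(0,0)$ produces $(\sigma^{n-2}\ba,\sigma^{n-2}\bu)=(02^\infty,02^\infty)\in\Gamma$, contradicting the assumption. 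The case $d_n=-2$ is treated symmetrically using the gap to the left of $P$: the $\C$-points in $[P-4\cdot 3^{-n},P]$ lie in $\{P\}\cup\bigl(\C\cap[P-4\cdot 3^{-n},P-3^{-(n-1)}]\bigr)$, the second set being nonempty only if $d_{n-1}=2$; the endpoint $P$ forces $T_n=0$ and all $d_{n+j}=4$, producing a $\overline{2}2^\infty$-tail in $\bu$, while the left-third $n$-th basic interval forces all $d_{n+j}=-2$, and the only choice of $(a_{n-1},u_{n-1})$ not violating $\Theta$ is $(2,0)$, yielding $(\sigma^{n-2}\ba,\sigma^{n-2}\bu)=(20^\infty,0\overline{2}^\infty)\in\Gamma$, again a contradiction.

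The proof is elementary but the case split is intricate; the main obstacle is the bookkeeping — tracking how the gap width adjacent to $P$ depends on $d_{n-1}$ (and, in the sub-case when $d_{n-1}=0$, on the position of the rightmost $2$ among $d_1,\dots,d_{n-2}$), and invoking the three clauses defining $\Theta$ (no leading $\overline{2}$, no tail $2\overline{2}^\infty$, no tail $\overline{2}2^\infty$) at precisely the right moments so as to eliminate all $\Theta$-violating sub-cases without ruling out the two $\Gamma$-patterns that must be allowed to survive.
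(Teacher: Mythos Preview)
Your proof is correct. For part (i) you take essentially the same route as the paper: argue by contradiction at the minimal index $n$ with $a_n+u_n\notin\{0,2\}$, then use the gap structure of $\C$ around the partial sum $P$ to pin down the tail, and finally invoke the three clauses defining $\Theta$ to force $(\ba,\bu)\in\Gamma$. The paper packages the gap analysis into a short lemma (your bounds are slightly looser since you use $|d_i|\le 4$ rather than $-2\le d_i\le 4$, but the $\C$-membership constraint recovers the slack), so the two arguments differ only in bookkeeping.

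Your reduction of (ii) to (i), however, is genuinely different from the paper's. You exploit the global symmetry $\C=1-\C$: setting $\ba'=(2-a_n)_n$, you observe that $\pi(\ba)-\pi(\bu)\in\C$ iff $\pi(\ba')+\pi(\bu)\in\C$, that plus-admissibility of $(\ba',\bu)$ is exactly minus-admissibility of $(\ba,\bu)$, and that the involution carries $\Gamma$ to $\Gamma'$. The paper instead takes $\bc\in\{0,2\}^\N$ with $\pi(\bc)=\pi(\ba)-\pi(\bu)$, applies (i) to the pair $(\bc,\bu)$ to get plus-admissibility, and then uses the identity $\ba=\bc\oplus\bu$ together with the observation that $(\bc\oplus\bu,\overline{\bu})$ is always plus-admissible (their Remark on the $\oplus$ operation) to conclude. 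Your symmetry argument is cleaner and avoids introducing the auxiliary $\bc$; the paper's argument has the minor advantage of being ``internal'' to the plus-admissibility calculus already set up, but both are short and valid.
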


Before proving Proposition \ref{pro-4.1}, we first give a lemma.

\begin{lem}
\label{lem-4.7}
Let $n\in \N$, $c_1\ldots c_n\in \{0,2\}^n$ and $x\in \R$. Then
\begin{itemize}
\item[(1)] If $c_n=0$, then $$.c_1\ldots c_n+ 3^{-n} x\not\in \C\mbox{  if }x\in (-3,0)\cup (1,2).$$
\item[(2)] If $c_n=2$, then
$$.c_1\ldots c_n+ 3^{-n} x\not\in \C\mbox { if }x\in (-1,0) \cup (1,4).$$
\end{itemize}
\end{lem}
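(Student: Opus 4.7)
\textbf{Proof plan for Lemma~\ref{lem-4.7}.} The plan is to translate both parts into a geometric statement about the gaps in the Cantor construction $K_{n-1}$. I would set $z:=.c_1\ldots c_{n-1}=\sum_{i=1}^{n-1}c_i3^{-i}$, so that $z$ is the left endpoint of some $(n-1)$-th basic interval $I:=[z,z+3^{1-n}]$. Depending on $c_n$, the number $y:=.c_1\ldots c_n$ equals either $z$ (when $c_n=0$) or $z+2\cdot 3^{-n}$ (when $c_n=2$); that is, $y$ is the left endpoint of the left third of $I$ or of the right third of $I$, respectively.

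The only geometric input I need is the following minimum-gap fact for $K_{n-1}$: any two consecutive $(n-1)$-th basic intervals are separated by a gap of length at least $3^{1-n}$, with equality for gaps that are created at the last removal step. Consequently, for any $(n-1)$-th basic interval $J$, the open intervals of length $3^{1-n}$ immediately to the left and immediately to the right of $J$ are disjoint from $\C$; the degenerate cases $z=0$ or $z+3^{1-n}=1$ are automatic, since the corresponding ranges fall outside $[0,1]$.

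For part~(1), with $y=z$, a direct computation gives $y+3^{-n}x\in(z+3^{-n},z+2\cdot 3^{-n})$ when $x\in(1,2)$ (the middle-third gap of $I$) and $y+3^{-n}x\in(z-3^{1-n},z)$ when $x\in(-3,0)$ (the length-$3^{1-n}$ strip immediately to the left of $I$). For part~(2), with $y=z+2\cdot 3^{-n}$, the same arithmetic yields $y+3^{-n}x\in(z+3^{-n},z+2\cdot 3^{-n})$ when $x\in(-1,0)$ (again the middle-third gap of $I$) and $y+3^{-n}x\in(z+3^{1-n},z+2\cdot 3^{1-n})$ when $x\in(1,4)$ (the length-$3^{1-n}$ strip immediately to the right of $I$). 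In each of the four sub-cases the minimum-gap fact shows that the range is disjoint from $\C$.

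I do not expect any real obstacle: this is essentially bookkeeping about where the open interval $y+3^{-n}(E)$ lies relative to the Cantor hierarchy, for $E$ each of the four open intervals listed. The one point needing care is the asymmetry between the two parts, which simply records that in case~(1) the point $y$ sits at the left edge of $I$ (so one may go distance $3^{1-n}$ to the left and $2\cdot 3^{-n}$ to the right before meeting an $(n-1)$-th basic interval), while in case~(2) it lies $2\cdot 3^{-n}$ from the right edge of $I$ (so one may go $3^{-n}$ left and $4\cdot 3^{-n}$ right); matching these distances against $3^{-n}x$ gives exactly the open intervals in the statement.
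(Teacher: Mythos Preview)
Your argument is correct and is essentially the same as the paper's: both proofs locate $.c_1\ldots c_n$ inside the Cantor hierarchy and observe that the relevant ranges $y+3^{-n}E$ fall into the removed gaps. The only cosmetic difference is that the paper phrases things relative to the $n$-th basic interval $[.c_1\ldots c_n,\,.c_1\ldots c_n+3^{-n}]$ and the gaps adjacent to it, whereas you phrase things relative to the parent $(n-1)$-th interval $[z,z+3^{1-n}]$ and its middle third; the geometric content is identical.
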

\begin{proof}
Notice that $.c_1\ldots c_n$ is the left endpoint of an $n$-th basic interval $I$ in defining $\C$ (cf. the first paragraph in Section \ref{S-2}). Furthermore when $c_n=0$, there is an open interval of length $\geq 3^{-n+1}$ on the left hand side of $I$ containing no points in $\C$, whilst there is an open interval
of length  $3^{-n}$ on the right hand side of $I$ containing no points in $\C$, from which (1) follows.  Similarly we have (2).
\end{proof}

\begin{proof}[Proof of Proposition \ref{pro-4.1}] We first prove (i). Assume that $(\ba, \bu)\not\in \Gamma$ and $\pi(\ba)+\pi(\bu)\in \C$.   Suppose on the contrary that $(\ba, \bu)$ is not plus-admissible. Let $n\geq 1$ be the smallest number such that
$a_n+u_n\not\in \{0,2\}$. Then either $(a_n, u_n)=(0, \overline{2})$,  or $(a_n, u_n)=(2, 2)$.  Letting $c_k=a_k+u_k$, by the minimality of $n$,
we have $c_k\in \{0,2\}$  for $k=1,\ldots, n-1$.

First consider the case when  $(a_n, u_n)=(0, \overline{2})$. Since $\bu\in \Theta$, we have $u_1\neq \overline{2}$ and thus $n>1$.    Furthermore, since $\bu$ does not end with $2 \overline{2}^\infty$ and $\overline{2}2^\infty$, we have
\begin{equation*}
\label{e-r1}
-3^{-n+1}= \sum_{k=n}^\infty (-2) 3^{-k} \leq \sum_{k=n}^\infty (a_k+u_k) 3^{-k}< (-2)3^{-n}+ \sum_{k=n+1}^\infty 4\cdot 3^{-k}= 0.
\end{equation*}
That is,
\begin{equation}
\label{e-r1}
\sum_{k=n}^\infty (a_k+u_k) 3^{-k}\in [-3^{-n+1}, 0).
\end{equation}
Notice that
$$
\pi(\ba)+\pi(\bu)=.c_1\ldots c_{n-1} + \sum_{k=n}^\infty (a_k+u_k) 3^{-k} \in \C.
$$
By \eqref{e-r1} and Lemma \ref{lem-4.7}, we  have $c_{n-1}=2$ and $\sum_{k=n}^\infty (a_k+u_k) 3^{-k}=-3^{-n+1}$, where the second equality implies that
$a_k=0$ and $u_k=\overline{2}$ for  $k\geq n$. Notice that $u_{n-1}\neq 2$ (for otherwise $\bu$ ends with $2\overline{2}^\infty$), and $a_{n-1}+u_{n-1}=c_{n-1}=2$. We have $a_{n-1}=2$ and $u_{n-1}=0$.  Therefore $\sigma^{n-2}\ba=20^\infty$ and $\sigma^{n-2}\bu= 0\overline{2}^\infty$. Hence  $(\ba, \bu)\in \Gamma$, leading to a contradiction.

Next consider the case when $(a_n, u_n)=(2,2)$. We have
$$
3^{-n+1}=4\cdot 3^{-n}+\sum_{k=n+1}^\infty (-2) 3^{-k}< \sum_{k=n}^\infty (a_k+u_k) 3^{-k}\leq  \sum_{k=n}^\infty 4\cdot 3^{-k}=2\cdot 3^{-n+1}.
$$
That is,
\begin{equation}
\label{e-r2}
\sum_{k=n}^\infty (a_k+u_k) 3^{-k}\in (3^{-n+1}, 2\cdot 3^{-n+1}].
\end{equation}
By \eqref{e-r2} and Lemma \ref{lem-4.7}, we have  $c_{n-1}=0$ and $\sum_{k=n}^\infty (a_k+u_k) 3^{-k}=2\cdot 3^{-n+1}$, where the second equality implies that
$a_k=2$ and $u_k=2$ for  $k\geq n$. Notice that $u_{n-1}\neq \overline{2}$ (for otherwise $\bu$ ends with $\overline{2} 2^\infty$), and $a_{n-1}+u_{n-1}=c_{n-1}=0$. We have $a_{n-1}=0$ and $u_{n-1}=0$.  Therefore $\sigma^{n-2}\ba=\sigma^{n-2}\bu= 0{2}^\infty$. Hence  $(\ba, \bu)\in \Gamma$, leading to a contradiction. This finishes the proof of (i).

Now  we prove (ii).  Assume that $(\ba,\bu)\not\in \Gamma'$ and $\pi(\ba)-\pi(\bu)\in \C$.
Take   $\bc\in \{0,2\}^\N$ so that $\pi(\bc)=\pi(\ba)-\pi(\bu)$. Then $\pi(\bc)+\pi(\bu)=\pi(\ba)\in \C$.
Note that $(\ba,\bu)\not\in \Gamma'$ implies $(\bc, \bu)\not\in \Gamma$. Hence by (i), $(\bc, \bu)$ is plus-admissible. By Lemma \ref{lem-admissible}, $\ba=\bc\oplus \bu$.
By Remark \ref{rem-4.1}, $(\bc\oplus \bu, \overline{\bu})$ is plus-admissible. Hence $(\ba,\overline{\bu})$ is plus-admissible, i.e., $(\ba,\bu)$ is minus-admissible. This proves (ii). \end{proof}

In the end of this section, we give one more definition.

\begin{de}
\label{de-4.3}
{\rm
In general, for $\ba\in \{0,2\}^\N$ and $\bu_1,\ldots, \bu_n\in \left\{0,2, \overline{2}\right\}^\N$,
we say that $(\ba, \bu_1, \bu_2,\ldots, \bu_n)$ is plus-admissible if
$(\ba_j, \bu_{j+1})$ is plus-admissible for  $j=0, \ldots,  n-1$, where $\ba_j$'s  are defined inductively
by   $\ba_0:=\ba$, $\ba_1:=\ba_0\oplus \bu_1$, ...,
$\ba_{n-1}=\ba_{n-2}\oplus \bu_{n-1}$.
}
\end{de}
\begin{rem}
\label{rem-4.2}
{\rm Let $\ba\in \{0,2\}^\N$ and $\bu_i=(u_{i, p})_{ p=1}^\infty\in \left\{0,2, \overline{2}\right\}^\N$, $i=1,\ldots, n$.
It is easy to see that $(\ba, \bu_1, \bu_2,\ldots, \bu_n)$ is plus-admissible if and only if
for each $ p\in \N$, the following properties hold:
\begin{itemize}
\item[(i)] Neither two consecutive letters $2$, nor  two consecutive $\overline{2}$  appear in the finite sequence (might be empty) obtained by deleting all letters $0$ from $(u_{i, p})_{i=1}^n$;
\item[(ii)] $a_p=0$ if the first letter in the above sequence is $2$, and $a_p=2$ if the first letter is $\overline{2}$.
\end{itemize}
}
\end{rem}

\section{A matrix-valued function $V$ and  matching properties}
\label{S-4}
Let $(a, \Phi)$ be a pair so that $a\in \C$ and
$\Phi=\{\phi_i\}_{i=1}^k\in \f$.
In this section we define a matrix-valued function $V$ over $\{1,\ldots, k\}^*:=\bigcup_{n\geq 1}\{1,\ldots,k\}^n$, and  show that $a+\F_\Phi\subseteq \C$ if and only if certain matching properties hold for $V$.  This criterion plays a key role in the proofs of our main results.

Suppose that
the maps $\phi_i$ are of the form $\phi_i(x)=s_i3^{-m_i}x+d_i$ as in \eqref{e-0}.
Let $a=\sum_{n=1}^\infty a_n 3^{-n}$ and $d_i=\sum_{n=1}^\infty u_{i,n} 3^{-n}$ ($i=1,\ldots, k$) be the intrinsic ternary expansions of $a, d_1,\ldots, d_k$. Write  $\ba=(a_n)_{n=1}^\infty$  and $\bu_i=(u_{i,n})_{n=1}^\infty$. Then $\ba\in \{0,2\}^\N$ and $\bu_i\in \Theta$ for $1\leq i\leq k$, where
$\Theta$ is defined as in \eqref{e-e6*}.

For $n\in \N$ and  $\bx=x_1\ldots x_n\in \{1,\ldots, k\}^n$, define an $n\times \infty$ matrix
$V(\bx):=V_\Phi(\bx)$ by
\begin{equation}
\label{e-1.e}
V(\bx)=(v_{j,p})_{1\leq j\leq n,p\geq 1},
\end{equation}
where $\bv_1=(v_{1,p})_{p=1}^\infty:=\bu_{x_1}$ and for $2\leq j\leq n$,
$$
\bv_j=(v_{j,p})_{p=1}^\infty:=\left\{\begin{array}{ll} 0^{m_{x_1}+\ldots+m_{x_{j-1}}}\bu_{x_j}, & \mbox{ if } s_{x_1}\ldots s_{x_{j-1}}=1,\\
 0^{m_{x_1}+\ldots+m_{x_{j-1}}}\overline{\bu_{x_j}}, & \mbox{ if } s_{x_1}\ldots s_{x_{j-1}}=-1.
\end{array}
\right.
$$

The above matrix-valued function $V(\cdot)$ plays an important role in our further analysis.   Take the convention that  $|\bx|=n$ if $\bx\in \{1,\ldots, k\}^n$. The following simple lemma just follows from the definition of $V(\cdot)$.

\begin{lem}
\label{lem-simple}
 Let $\bx=x_1\ldots x_i$, $\bz=z_1\ldots z_j\in \{1,\ldots, k\}^*$ and $n\in \N$. then
\begin{itemize}
\item[(i)] For $1\leq p\leq i$, the $(p,n)$-entry of $V(\bx)$ is equal to
$$
s_{x_1}\ldots s_{x_{p-1}} u_{x_p, n-\sum_{t=1}^{p-1}m_{x_t}}.
$$
\item[(ii)] If $d_{x_p}=0$, then the $(p,n)$-entry of  $V(\bx)$ is zero.
\item[(iii)] If $s_{\bz}:=s_{z_1}\ldots s_{z_j}=1$,    then the $n$-th column of $V(\bx)$ is a suffix of the $(n+q)$-th column of  $V(\bz\bx)$,
where  $q=\sum_{t=1}^jm_{z_t}$; that is, the last $|\bx|$-entries in the $(n+q)$-th column of  $V(\bz\bx)$ coincide with the entries in the $n$-th column of $V(\bx)$. Moreover if $s_{\bz}=-1$, then the $n$-th column of $V(\bx)$ is a suffix of the $(n+q)$-th column of  $(-V(\bz\bx))$.

\end{itemize}

\end{lem}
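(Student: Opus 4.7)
The plan is to verify all three parts by unwinding the row-by-row definition of $V(\bx)$; at heart, these are bookkeeping statements about how the leading zero-padding and the sign products built into the definition interact with the column index $n$.

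For part (i), I would first dispense with the case $p = 1$, which is just $\bv_1 = \bu_{x_1}$ read off at position $n$ (the empty product of signs is $1$ and the empty sum of $m$'s is $0$, so the displayed formula reduces to $u_{x_1, n}$). For $p \geq 2$, the definition writes row $\bv_p$ as $0^{M_{p-1}} \bu_{x_p}$ when $s_{x_1}\cdots s_{x_{p-1}} = 1$ and as $0^{M_{p-1}} \overline{\bu_{x_p}}$ otherwise, where $M_{p-1} := m_{x_1} + \cdots + m_{x_{p-1}}$. Reading off position $n$ of this sequence yields $s_{x_1}\cdots s_{x_{p-1}}\, u_{x_p, n - M_{p-1}}$, under the natural convention that $u_{x_p,k} = 0$ for $k \leq 0$ so that the prefix of zeros is absorbed into the same formula. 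Part (ii) is then immediate: if $d_{x_p} = 0$, its intrinsic ternary expansion is $0^\infty$, so every $u_{x_p,k}$ vanishes and so does the product in (i).

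For part (iii), I would apply (i) to both $V(\bx)$ and $V(\bz\bx)$ and compare. In the concatenation $\bz\bx$, the letter $x_p$ occupies position $j + p$, and the sum of the $m$-values of its predecessors is $q + M_{p-1}$. Hence by (i) the $(j+p, n+q)$-entry of $V(\bz\bx)$ equals
\[
s_{z_1}\cdots s_{z_j}\, s_{x_1}\cdots s_{x_{p-1}}\, u_{x_p,(n+q) - (q + M_{p-1})} \;=\; s_{\bz}\cdot s_{x_1}\cdots s_{x_{p-1}}\, u_{x_p,n - M_{p-1}},
\]
which by (i) applied to $V(\bx)$ is exactly $s_{\bz}$ times the $(p,n)$-entry of $V(\bx)$. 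Letting $p$ range over $1, \ldots, i$, the last $i$ entries of the $(n+q)$-th column of $V(\bz\bx)$ coincide with the $n$-th column of $V(\bx)$ when $s_{\bz} = 1$ and with its negative when $s_{\bz} = -1$, which are precisely the two assertions of (iii).

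There is no substantial obstacle here; the entire content is a careful matching of indices and signs against the definition of $V$. The only mild subtlety is the non-positive subscript convention used in (i), but it is purely cosmetic since both sides of the displayed identity vanish when $n \leq M_{p-1}$.
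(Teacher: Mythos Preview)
Your proof is correct and is exactly the direct verification from the definition of $V(\cdot)$ that the paper has in mind; the paper itself does not spell out any argument beyond the remark that the lemma ``just follows from the definition of $V(\cdot)$.'' Your handling of the convention $u_{x_p,k}=0$ for $k\leq 0$ is the right way to make part (i) uniform, and the index bookkeeping in part (iii) is clean.
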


Now we are ready to  formulate our criterion.
\begin{thm}
\label{thm-1.3}
$a+\F_\Phi\subseteq \C$ if and only if  for every  $\bx\in \{1,\ldots, k\}^*$ and  $p\in \N$:  \begin{itemize}
\item[(i)] Neither two consecutive letters $2$, nor  two consecutive $\overline{2}$  appear in the finite sequence (might be empty) obtained by deleting all zero entries  from the $p$-th column of the matrix $V(\bx)$;
\item[(ii)] $a_p=0$ if the first letter in the above reduced sequence is $2$, and $a_p=2$ if the first letter is $\overline{2}$.
\end{itemize}
\end{thm}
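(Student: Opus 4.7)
\textbf{Proof plan for Theorem~\ref{thm-1.3}.}
By Remark~\ref{rem-4.2}, the column conditions (i) and (ii) on $V(\bx)$ for $\bx=x_1\ldots x_n$ are exactly the assertion that the tuple $(\ba,\bv_1,\ldots,\bv_n)$ is plus-admissible, where $\bv_j$ denotes the $j$-th row of $V(\bx)$. The bridge between this combinatorial condition and the geometry is the identity
\[
\pi(\ba)+\sum_{j=1}^{n}\pi(\bv_j)\;=\;a+\phi_{x_1\ldots x_n}(0),
\]
which follows from $\pi(\bv_j)=s_{x_1}\cdots s_{x_{j-1}}\cdot 3^{-(m_{x_1}+\cdots+m_{x_{j-1}})}\cdot d_{x_j}$ together with the telescoping expansion of $\phi_{x_1\ldots x_n}(0)$. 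Note also that $s_1=1$ and $d_1=0$ force $\phi_1(0)=0$, so $0\in\F_\Phi$; consequently the orbit $\{\phi_\bx(0):\bx\in\{1,\ldots,k\}^*\}$ is dense in $\F_\Phi$.

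For the $(\Leftarrow)$ direction, assume the column conditions hold for every $\bx$. Iterated applications of Lemma~\ref{lem-admissible}(i) give $\ba\oplus\bv_1\oplus\cdots\oplus\bv_n\in\{0,2\}^\N$ with $\pi$-value equal to $a+\phi_\bx(0)$, so $a+\phi_\bx(0)\in\C$ for every $\bx$. Closedness of $\C$ combined with density of the orbit in $\F_\Phi$ then yields $a+\F_\Phi\subseteq\C$.

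For the $(\Rightarrow)$ direction, assume $a+\F_\Phi\subseteq\C$ and prove plus-admissibility of $(\ba,\bv_1,\ldots,\bv_n)$ by induction on $n$. The base case is vacuous. For the inductive step, the first $n$ rows of $V(x_1\ldots x_{n+1})$ coincide with those of $V(x_1\ldots x_n)$, so the inductive hypothesis produces $\ba_n:=\ba\oplus\bv_1\oplus\cdots\oplus\bv_n\in\{0,2\}^\N$, and it remains to verify that $(\ba_n,\bv_{n+1})$ is plus-admissible. Write $q=m_{x_1}+\cdots+m_{x_n}$ and $s=s_{x_1}\cdots s_{x_n}$. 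When $s=+1$ one has $\bv_{n+1}=0^q\bu_{x_{n+1}}\in\Theta$, and Proposition~\ref{pro-4.1}(i) applied to $(\ba_n,\bv_{n+1})$ --- whose sum $a+\phi_{x_1\ldots x_{n+1}}(0)$ lies in $\C$ --- yields plus-admissibility provided $(\ba_n,\bv_{n+1})\notin\Gamma$. When $s=-1$, the pair $\bv_{n+1}=\overline{0^q\bu_{x_{n+1}}}$ forces going through Proposition~\ref{pro-4.1}(ii): plus-admissibility of $(\ba_n,\bv_{n+1})$ is equivalent to minus-admissibility of $(\ba_n,0^q\bu_{x_{n+1}})$, which holds provided $(\ba_n,0^q\bu_{x_{n+1}})\notin\Gamma'$.

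The main obstacle is the handling of the exceptional configurations $\Gamma$ and $\Gamma'$. In any such configuration, $\bu_{x_{n+1}}$ would have a highly restrictive tail of the form $\cdots 02^\infty$ or $\cdots 0\overline{2}^\infty$, forcing $d_{x_{n+1}}$ to be a specific rational endpoint of a basic Cantor interval. The plan is to rule these out by replacing $0\in\F_\Phi$ with a carefully chosen $y\in\F_\Phi$: one has
\[
a+\phi_{x_1\ldots x_{n+1}}(y)=\pi(\ba_n)+\pi(\bv_{n+1})+r\,y,
\]
where $r=s\cdot s_{x_{n+1}}\cdot 3^{-(q+m_{x_{n+1}})}$. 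For $y$ of the form $\phi_\by(0)$ with $\by$ chosen so as to produce a nonzero ternary digit just past the exceptional tail, the perturbation $r\,y$ drives the sum into a gap of $\C$, contradicting $a+\F_\Phi\subseteq\C$. Performing this exclusion systematically for every pattern in $\Gamma,\Gamma'$ is the real technical content of the proof.
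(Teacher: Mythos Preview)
Your overall strategy matches the paper's exactly: the sufficiency direction via Remark~\ref{rem-4.2} and Lemma~\ref{lem-admissible} is identical, and for necessity you correctly reduce, by induction on $|\bx|$, to verifying that each pair $(\ba_n,\bv_{n+1})$ is plus-admissible, which by Proposition~\ref{pro-4.1} amounts to excluding the exceptional configurations $\Gamma,\Gamma'$. This is exactly the content of the paper's Lemma~\ref{lem-5.2}.

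The difference --- and the gap --- is in how you perturb to rule out $\Gamma$. You propose to \emph{append} a word $\by$ after $x_{n+1}$, producing $\pi(\ba_n)+\pi(\bv_{n+1})+ry$, and then push this sum into a gap of $\C$. But take the pattern $(\sigma^k\ba_n,\sigma^k\bv_{n+1})=(02^\infty,02^\infty)$ with $s=s_{x_{n+1}}=+1$, so $r>0$. A direct computation gives $\pi(\ba_n)+\pi(\bv_{n+1})=.w_1\ldots w_k 2\,0^\infty$, the \emph{left} endpoint of a $(k{+}1)$-th basic interval; adding any small positive $ry$ keeps you inside that interval. You would then need $3^{k+1}ry\notin\C$ for some $y\in\F_\Phi$, i.e.\ $3^{\,k+1-q-m_{x_{n+1}}}y\notin\C$, and there is no reason this can be arranged from the hypotheses --- indeed it fails whenever $3^{\,k+1-q-m_{x_{n+1}}}\F_\Phi\subseteq\C$. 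So the appended perturbation does not yield a contradiction in general.

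The paper instead \emph{inserts} $1^p$ before the last index and looks at
\[
a+\phi_{x_1\ldots x_n 1^p x_{n+1}}(0)=\pi(\ba_n)+3^{-pm_1}\pi(\bv_{n+1}).
\]
The point is that this perturbs $\pi(\ba_n)$ rather than $\pi(\ba_n)+\pi(\bv_{n+1})$. In the same $\Gamma$ pattern, $\pi(\ba_n)=.c_1\ldots c_k 0\,2^\infty$ is the \emph{right} endpoint of a $(k{+}1)$-th basic interval, so adding the tiny positive quantity $3^{-pm_1}\pi(\bv_{n+1})$ lands in the adjacent gap for all large $p$. (For the second $\Gamma$ pattern $(20^\infty,0\overline{2}^\infty)$, $\pi(\ba_n)$ is a left endpoint, but now the intrinsic expansion of $\bv_{n+1}$ contains $\overline{2}$'s, so $\pi(\bv_{n+1})\notin\C$ and one checks directly that $\pi(\ba_n)+3^{-pm_1}\pi(\bv_{n+1})\notin\C$ for large $p$.) In short: insert $1^p$, don't append.
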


\begin{rem} Although this criterion depends on the matching properties of $V(\bx)$ over all  $\bx\in \{1,\ldots, k\}^*$,  in the coming Sections 5-6 we will show that essentially it is enough to check  the matching properties of $V(\bx)$ for finitely many $\bx$.     
\end{rem}

  Before proving Theorem \ref{thm-1.3}, we first give several  lemmas.

\begin{lem}
\label{lem-1.2}
 $a+\F_\Phi\subseteq \C$ if and only if
$a+\phi_{x_1\ldots x_n}(0)\in \C$ for every $n\in \N$ and  $x_1\ldots x_n\in \{1,\ldots,k\}^n$,
where $\phi_{x_1\ldots x_n}:=\phi_{x_1}\circ \cdots \circ \phi_{x_n}$.
\end{lem}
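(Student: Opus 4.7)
The plan is to use two standard facts about self-similar attractors: that $\F_\Phi$ is forward-invariant under each $\phi_i$, and that every point of $\F_\Phi$ is a limit of finite compositions $\phi_{x_1\ldots x_n}(y_0)$ applied to any fixed seed $y_0\in\R$.

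First I would dispose of the easy direction. The normalization (ii) in the definition of $\f$ forces $\phi_1(x)=3^{-m_1}x$, so $0$ is the fixed point of $\phi_1$ and hence $0\in\F_\Phi$. Applying the semigroup action, $\phi_{x_1\ldots x_n}(0)\in \phi_{x_1\ldots x_n}(\F_\Phi)\subseteq\F_\Phi$ for every word $x_1\ldots x_n$. If $a+\F_\Phi\subseteq\C$, then in particular $a+\phi_{x_1\ldots x_n}(0)\in\C$.

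For the converse, I would invoke the classical fact, built into Hutchinson's proof, that for any fixed $y_0\in\R$,
\[
\F_\Phi=\Bigl\{\lim_{n\to\infty}\phi_{x_1\ldots x_n}(y_0):\,(x_j)_{j=1}^\infty\in\{1,\ldots,k\}^\N\Bigr\},
\]
where the limit always exists because the diameter of $\phi_{x_1\ldots x_n}(B)$ decays geometrically to $0$ for any bounded set $B$. Specializing to $y_0=0$, every $y\in\F_\Phi$ is a limit $y=\lim_{n\to\infty}\phi_{x_1\ldots x_n}(0)$, and hence $a+y=\lim_{n\to\infty}(a+\phi_{x_1\ldots x_n}(0))$. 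By hypothesis, the approximating sequence lies in the closed set $\C$, so $a+y\in\C$. Therefore $a+\F_\Phi\subseteq\C$.

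No real obstacle is expected; the argument is a routine two-line application of the address map. The only point worth flagging is the initial verification that $0\in\F_\Phi$, which is immediate from the normalization $s_1=1$, $d_1=0$ assumed for every member of $\f$. (Should one prefer to avoid this, one can replace $0$ with an arbitrary seed $y_0$ in both the hypothesis and the conclusion and the same argument goes through.)
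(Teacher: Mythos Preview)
Your proof is correct and follows essentially the same approach as the paper's. The paper's proof is terser: it simply notes that $0\in\F_\Phi$ (since $\Phi\in\f$) and that $\F_\Phi$ is the closure of $\{\phi_{x_1\ldots x_n}(0)\}$, then declares the lemma follows; your version just spells out both directions explicitly.
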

\begin{proof}
Since $\Phi\in \f$, we have  $0\in \F_\Phi$. Therefore
$\F_\Phi$ is just the closure of the set $\{\phi_{x_1\ldots x_n}(0):\; x_1\ldots x_n\in \{1,\ldots,k\}^n,\; n\in \N\}$, from which the lemma follows.
\end{proof}

\begin{lem}
\label{lem-5.2}
Assume that $a+F_\Phi\subseteq \C$. Then the following properties hold:
\begin{itemize}
\item[(i)] For each $1\leq i\leq k$, $(\ba, \bu_i)$ is plus-admissible.
\item[(ii)] Let $n>1$ and   $x_1\ldots x_n\in \{1,\ldots, k\}^n$. Write $$\alpha=a+\phi_{x_1 \ldots x_{n-1}}(0),\quad
\beta=3^{-(m_{x_1}+\ldots+m_{x_{n-1}})}d_{x_n}.$$
 Let  $\alpha=\sum_{n=1}^\infty c_n3^{-n}$ and  $\beta=\sum_{n=1}^\infty u_n 3^{-n}$ be the intrinsic ternary expansions of $\alpha, \beta$. Set   ${\bc}=(c_n)_{n=1}^\infty$ and $\bu=(u_n)_{n=1}^\infty$.
Then the pair $(\bc, \bu)$ is plus-admissible if $s_{x_1}\ldots s_{x_{n-1}}=1$,
and minus-admissible if  $s_{x_1}\ldots s_{x_{n-1}}=-1$.
\end{itemize}
\end{lem}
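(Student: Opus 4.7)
The plan is to reduce both parts to Proposition \ref{pro-4.1} via the natural additive decomposition of $\phi_{x_1\dots x_n}(0)$, and then to rule out the exceptional configurations in $\Gamma$ and $\Gamma'$ by iterating the IFS.

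For part~(i), since $\Phi\in\cF$ forces $\phi_1(0)=0\in\F_\Phi$, we have $d_i=\phi_i(0)\in\F_\Phi$, and the hypothesis $a+\F_\Phi\subseteq\C$ gives $a+d_i\in\C$, i.e.\ $\pi(\ba)+\pi(\bu_i)\in\C$. If $(\ba,\bu_i)\notin\Gamma$, Proposition \ref{pro-4.1}(i) yields plus-admissibility at once. To dispose of the remaining possibility $(\ba,\bu_i)\in\Gamma$, I would derive a contradiction with $a+\F_\Phi\subseteq\C$ by iterating $\phi_i$: in case~1 of $\Gamma$, $\ba$ and $\bu_i$ share a common tail $02^\infty$ past some index $k$, so that $a=A+3^{-k-1}$ and $d_i=D+3^{-k-1}$ for $\{0,2\}$-digit partial sums $A,D$ of length $k$. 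Expanding
\[
a+\phi_i^N(0)=A+3^{-k-1}+(D+3^{-k-1})\sum_{j=0}^{N-1}(s_i 3^{-m_i})^j
\]
for large $N$ and converting the resulting standard ternary expansion into its intrinsic form in $\{0,2,\overline{2}\}^\N$, one sees that a $\overline{2}$ digit is eventually forced, so $a+\phi_i^N(0)\notin\C$. Case~2 of $\Gamma$ is treated symmetrically, with the roles of $2$ and $\overline{2}$ interchanged.

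For part~(ii) I would induct on $n$, with part~(i) providing the base. By the induction hypothesis together with Lemma \ref{lem-admissible}, $\alpha=a+\phi_{x_1\dots x_{n-1}}(0)$ lies in $\C$ and hence has a well-defined intrinsic expansion $\bc$. The IFS identity
\[
\phi_{x_1\dots x_n}(0)=\phi_{x_1\dots x_{n-1}}(0)+s_{x_1}\cdots s_{x_{n-1}}\cdot 3^{-(m_{x_1}+\cdots+m_{x_{n-1}})}d_{x_n}
\]
gives $\alpha+s\beta\in\C$ with $s=s_{x_1}\cdots s_{x_{n-1}}\in\{\pm 1\}$. When $s=1$ this reads $\pi(\bc)+\pi(\bu)\in\C$; when $s=-1$, it reads $\pi(\bc)-\pi(\bu)\in\C$. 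Apply Proposition \ref{pro-4.1}(i) or (ii) accordingly; the $\Gamma$ or $\Gamma'$ exception is ruled out by the same iteration mechanism as in (i), now applied to the extended word $x_1\dots x_n x_{n+1}$ for a suitably chosen $x_{n+1}$.

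The principal obstacle is precisely this exception handling. In an exceptional configuration, $\alpha$ and $\pm\beta$ share a tail $02^\infty$ or $0\overline{2}^\infty$, so $\alpha+s\beta$ lies exactly at an endpoint of some level-$(k+1)$ basic interval of $\C$. The strategy is to produce an iterate $\phi_{x_1\dots x_n x_{n+1}}(0)\in\F_\Phi$ whose translate by $a$ is forced into the removed middle-third adjacent to that basic interval, contradicting $a+\F_\Phi\subseteq\C$. The case split on the sign $s_{x_n}$ and on the relation between $m_{x_n}$ and the tail index $k$ constitutes the most delicate technical step.
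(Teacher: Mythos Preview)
Your overall strategy is right: reduce to Proposition~\ref{pro-4.1} after checking $\alpha\in\C$ and $\alpha\pm\beta\in\C$, and then rule out the exceptional sets $\Gamma,\Gamma'$. The difficulty, as you correctly identify, is entirely in that last step---and this is where your proposal diverges from the paper and leaves a real gap.

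For part~(i) you iterate $\phi_i$ itself and try to extract a contradiction from the expansion of $a+\phi_i^N(0)=a+d_i\sum_{j=0}^{N-1}(s_i3^{-m_i})^j$. This geometric sum does not interact cleanly with the tail structure of $\ba$ and $\bu_i$; the case analysis on $s_i$, on $m_i$ versus the tail index $k$, and on the signed digits of $D$ is genuinely delicate, and you do not carry it out. More seriously, for part~(ii) your suggestion to ``extend the word to $x_1\ldots x_nx_{n+1}$'' does not produce the right perturbation: it yields $\alpha+s\beta+s\,s_{x_n}3^{-(m_{x_1}+\cdots+m_{x_n})}d_{x_{n+1}}$, which changes both summands rather than scaling $\beta$ alone, so there is no obvious way to force the point into a gap of $\C$.

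The paper avoids all of this with a single clean device that works uniformly for (i) and (ii): insert copies of the map $\phi_1$ (recall $s_1=1$, $d_1=0$) \emph{between} $\phi_{x_{n-1}}$ and $\phi_{x_n}$. Concretely,
\[
a+\phi_{x_1\ldots x_{n-1}1^p x_n}(0)=\alpha+s\cdot 3^{-pm_1}\beta=\pi(\bc)+3^{-pm_1}\pi(\bu),
\]
so $\alpha$ stays fixed while $\beta$ is scaled by $3^{-pm_1}$. One then only needs the elementary observation that if $(\bc,\bu)\in\Gamma$ then $\pi(\bc)+3^{-t}\pi(\bu)\notin\C$ for all large $t$ (because $\pi(\bc)$ sits at an endpoint of a basic interval and the small positive perturbation lands in the adjacent gap). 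This immediately contradicts $a+\F_\Phi\subseteq\C$. The case $s=-1$ is symmetric via $\Gamma'$. With this device no induction on $n$ is needed, and part~(i) is just the case $n=1$.
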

\begin{proof}
We only prove (ii), since (i) can be viewed as the variant of (ii) corresponding to the particular case when $n=1$.

First assume that  $s_{x_1}\ldots s_{x_{n-1}}=1$.  In this case, $\alpha+\beta=a+\phi_{x_1\ldots x_n}(0)$. By Lemma \ref{lem-1.2}, we have $\alpha\in \C$ and $\alpha+\beta\in \C$. By Proposition \ref{pro-4.1}(i), to  show that $({\bf c}, \bu)$ is plus-admissible,
  it suffices to show that $({\bf c}, \bu)\not\in \Gamma$.
  Assume on the contrary, $({\bf c}, {\bu})\in \Gamma$.
  Then it is direct to check that  for $t\in \N$, \begin{equation}
  \label{e-r4}
  \pi({\bf c})+3^{-t}\pi(\bu)\not\in \C \mbox{  when $t$ is large enough}.
  \end{equation}
  However, by Lemma \ref{lem-1.2},
$a+\phi_{x_1\ldots x_{n-1} 1^p x_n}(0)\in \C$ for each $p\in \N$.
Notice that
\begin{equation*}
\begin{split}
a+\phi_{x_1\ldots x_{n-1} 1^p x_n}(0)&=a+\phi_{x_1 \ldots x_{n-1}}(0)+ s_{x_1}\ldots s_{x_{n-1}} 3^{-p m_1}3^{-(m_{x_1}+\ldots+m_{x_{n-1}})}d_{x_n}\\
&=\alpha+3^{-p m_1} \beta=\pi({\bf c})+3^{-p m_1}\pi(\bu).
\end{split}
\end{equation*}
Hence we have
$\pi({\bf c})+3^{-p m_1}\pi(\bu)\in \C$ for any $p\in \N$.
This  contradicts \eqref{e-r4}.

Next assume that $s_{x_1}\ldots s_{x_{n-1}}=-1$. A similar argument shows that $({\bf c}, \bu)$ is minus-admissible. We omit the details.
\end{proof}

\begin{proof}[Proof of Theorem \ref{thm-1.3}] We first prove the `only if' part.  Let $n\in \N$ and $\bx=x_1\ldots x_n\in \{1,\ldots, k\}^n$. Assume that the combinatoric properties (i)-(ii)  hold for $\bx$. Let $\bv_1,\ldots, \bv_n$ be the rows of $V(\bx)$. By Remark \ref{rem-4.2}, $(\ba, \bv_1, \bv_2,\ldots, \bv_n)$ is plus-admissible in the sense that $(\ba_{j-1}, \bv_j)$ is plus-admissible for each $1\leq j\leq n$, where $\ba_0:=\ba$,
 $\ba_1:=\ba_0\oplus \bv_1,\cdots$, and $\ba_{j}=\ba_{j-1}\oplus\bv_j$. Applying Lemma \ref{lem-admissible} repeatedly, we have
\begin{equation}
\label{e-tc1}
a+d_{x_1}+s_{x_1}3^{-m_{x_1}} d_{x_2}+\cdots +(s_{x_1}\cdots s_{x_{j-1}})3^{-(m_{x_1}+\ldots+m_{x_{j-1}})}d_{x_j}\in \C
\end{equation}
for $j=1,\ldots, n$.
That is, $a+\phi_{x_1\ldots x_j}(0)\in \C$ for $1\leq j\leq n$.  Letting $\bx$ vary over $\{1,\ldots,k\}^*$,  we have $a+\F_\Phi\subseteq \C$ by Lemma \ref{lem-1.2}.

Next we prove the `if' part.
Assume that $a+\F_\Phi\subseteq \C$.
Then for any $n\in \N$ and  $\bx=x_1\ldots x_n\in \{1,\ldots, k\}^n$, \eqref{e-tc1} holds for any $1\leq j\leq n$. By Lemma \ref{lem-5.2},
$(\ba, \bv_1, \bv_2,\ldots, \bv_n)$ is plus-admissible.
Hence by Remark \ref{rem-4.2},  the combinatoric properties (i)-(ii) hold.
\end{proof}

\section{The proofs of Theorems \ref{thm-1.4} and \ref{thm-1.7}}
\label{S-5}

 \begin{proof}[Proof of Theorem \ref{thm-1.4}] We first prove the `if' part of the theorem. Let  $\bx\in \{1,\ldots, k\}^*$ and $p\in \N$.  The assumptions (i)-(iii) on $U_\Phi$   guarantee that
  the $p$-th column of $V(\bx)$ (cf. \eqref{e-1.e}) contains at most one element in $\{2, \overline{2}\}$. It follows that property (i) in Theorem \ref{thm-1.3} holds.  Furthermore the assumption (iv)  gurantees that property (ii) in Theorem \ref{thm-1.3} holds.
  Since $(\bx, p)$ is arbitrarily taken from $\{1,\ldots, k\}^*\times \N$, by Theorem \ref{thm-1.3}, $a+\F_\Phi\subseteq \C$.

  Next we prove the  `only if' part.  Assume that $a+\F_\Phi\subseteq \C$.  Set
\begin{equation}
\M=\left\{n\in \N:\; U_\Phi(n)\neq \{0,0,\ldots, 0\}^T\right\}.
\end{equation}
 To prove that  (i)-(iv) in Theorem \ref{thm-1.4} hold,
 it suffices to  prove that the following two  properties on $U_\Phi=(u_{i,n})_{1\leq i\leq k, n\geq 1}$ and $\ba=(a_n)_{n=1}^\infty$  hold, as they are equivalent to  properties (i)-(iv):
  \begin{itemize}
  \item[(1)] If $u_{i,p}=2$ for some pair $(i, p)$ satisfying $1\leq i\leq k$ and $p\geq 1$, then $u_{j,p}\in \{0,2\}$, $u_{j,p+mn}=0$ and $a_{p+m(n-1)}=0$  for any $1\leq j\leq k$ and $n\geq 1$.
  \item[(2)]If $u_{i,p}=\overline{2}$  for some pair $(i, p)$ satisfying $1\leq i\leq k$ and $p\geq 1$, then $u_{j,p}\in \{0,\overline{2}\}$, $u_{j,p+mn}=0$ and $a_{p+m(n-1)}=2$  for any $1\leq j\leq k$ and $n\geq 1$.
  \end{itemize}
  Without loss of generality, we only prove (1). The proof of (2) is similar.  Assume that $u_{i,p}=2$ for some  pair $(i, p)$ satisfying $1\leq i\leq k$ and $p\geq 1$.  Let $n\geq 1$.
  Then the $(p+m(n-1))$-th column of the matrix $V(1^{n-1}i)$ is $(0,\ldots,0, 2)^T$, in which the first non-zero entry is $2$. Hence by property (ii) in Theorem \ref{thm-1.3}, $a_{p+m(n-1)}=0$.
  This also implies that $u_{j,p}\neq \overline{2}$ (and  $u_{j, p+mn}\neq \overline{2}$ as well) for any $1\leq j\leq k$;  otherwise if   $u_{j,p}= \overline{2}$ for some $j$, then a similar argument shows that $a_{p+m(n-1)}=2$, leading to a contradiction. Now notice that the $(p+mn)$-th column of the matrix  $V(j1^{n-1} i)$  is
  $(u_{j, p+mn}, 0,\ldots, 0, 2)$. Since $u_{j,p+mn}\neq \overline{2}$, we get $u_{j, p+mn}=0$ from the combinatoric property (i) in Theorem \ref{thm-1.3}. This finishes the proof of (1).
 \end{proof}

\begin{proof}[Proof of Theorem \ref{thm-1.7}]
We first prove the `only if' part of the theorem.  Assume that   $a+\F_\Phi\subseteq \C$.
Define
\begin{equation}
\label{e-M}
\begin{split}
\M=\{n\in \N:\; & U_\Phi(n)\neq \{0,0,\ldots, 0\}^T, \mbox{ but } U_\Phi(n')= \{0,0,\ldots, 0\}^T\\
&\mbox{ for  all $n'<n$ with $n'\equiv n$ (mod $m$)}\} .
\end{split}
\end{equation}
Clearly, the elements in $\M$ are incongruent modulo $m$.
To prove the desired properties (ii)-(iv) for $\M$ and  $U_\Phi$ in the theorem, it is equivalent to prove Properties 1-2 listed as below:

{\sl Property 1:   If $u_{i, p}=2$ for some pair $(i, p)\in \{1,\ldots,k\}\times \N$, then
\begin{eqnarray*}
a_{p+mn}&=&0 \quad \mbox{ for all }n\geq 0,\\
u_{j, p+mn}&=&\left\{
\begin{array}{ll}
0 \quad \mbox{or}\quad  2, & \mbox{ if } 1\leq j\leq k \mbox{ and } n=0,\\
0, & \mbox{ if } 1\leq j\leq k' \mbox{ and } n\geq 1,\\
2, & \mbox{ if } k'< j\leq k \mbox{ and } n\geq 1.\\

\end{array}
\right.
\end{eqnarray*}
 }

{\sl Property 2: If $u_{i, p}=\overline{2}$ for some  pair $(i, p)\in \{1,\ldots,k\}\times \N$, then
\begin{eqnarray*}
a_{p+mn}&=&2 \quad \mbox{ for all }n\geq 0,\\
u_{j, p+mn}&=&\left\{
\begin{array}{ll}
0\quad \mbox{or}\quad  \overline{2}, & \mbox{ if } 1\leq j\leq k \mbox{ and } n=0,\\
0, & \mbox{ if } 1\leq j\leq k' \mbox{ and } n\geq 1,\\
\overline{2}, & \mbox{ if } k'< j\leq k \mbox{ and } n\geq 1.\\
\end{array}
\right.
\end{eqnarray*}
}

\medskip

Without loss of generality, we prove Property 1 only. The proof of Property 2 is essentially identical to that of  Property 1.

First we consider the case that $u_{i, p}=2$ for some pair $(i, p)$ with $i\leq k'$ and $p\in \N$. Let $\Phi'= \{\phi_{i}\}_{i=1}^{k'}$. Then
$\Phi'\in \f_m^+$ and $a+\F_{\Phi'}\subseteq a+\F_\Phi\subseteq \C$.  Applying Theorem \ref{thm-1.4} to the pair $(a, \Phi')$,   we have $a_{p+mn}=0$ for all $n\geq 0$,  $u_{j, p}\in \{0,2\}$ for $j\leq k'$ and
$u_{j, p+mn}=0$ if $n\geq 1$, $j\leq k'$. Next suppose  $j\in (k', k]$. Notice that  the $p$-th column of
$V(j)$ is  $(u_{j,p})$, and $a_p=0$.
By Theorem \ref{thm-1.3}(ii), we get $u_{j,p}\in \{0,2\}$. Let $n\geq 1$.
Since the $(p+mn)$-th column of  $V(j1^{n-1}i)$ is
\begin{equation*}
(u_{j,p+mn}, \underbrace{0,\ldots, 0}_{n-1}, \overline{2})^T.
\end{equation*}
by Theorem \ref{thm-1.3}(i), $u_{j, p+mn}\in \{0,2\}$.  If $u_{j,p+mn}=0$, then the first non-zero letter in the above column vector  is $\overline{2}$, which forces $a_{p+mn}=2$ by  Theorem \ref{thm-1.3}(ii),
leading to a  contradiction. Hence we  have   $u_{j, p+mn}=2$.

Next we consider the case that  $u_{i, p}=2$ for some pair $(i, p)$ with $k'<i\leq k$ and $p\in \N$.
Let $n\geq 0$.  The $(p+mn)$-th column of the matrix $V(1^ni)$ is $( \underbrace{0,\ldots, 0}_{n},2)^T$,
hence by   Theorem \ref{thm-1.3}(ii),   $a_{p+mn}=0$. Let $1\leq j\leq k$.
The $(p+mn)$-th column of the matrix $V(j)$ is $(u_{j, p+mn})$. Since $a_{p+mn}=0$,
by Theorem \ref{thm-1.3}(ii), we have  $u_{j,p+mn}\in \{0, 2\}$.
Next assume $n\geq 1$. We consider the cases  $j\leq k'$ and $j>k'$ separately. First suppose that $j\leq k'$.  In this case,   the  $(p+mn)$-th column of the  $V(j1^{n-1}i)$ is
$
(u_{j, p+mn}, \underbrace{0,\ldots, 0}_{n-1}, 2)^T.
$
Hence by  Theorem \ref{thm-1.3}(i), $u_{j, p+mn}\in \{0,\overline{2}\}$, which forces $u_{j,p+mn}=0$ (since we have proved $u_{j, p+mn}\in \{0,{2}\}$). Next suppose that  $j> k'$. Since $s_j=-1$,   the  $(p+mn)$-th column of the  $V(j1^{n-1}i)$ is\begin{equation*}
(u_{j, p+mn}, \underbrace{0,\ldots, 0}_{n-1}, \overline{2})^T.
\end{equation*}
If $u_{j, p+mn}=0$, then by Theorem \ref{thm-1.3}(ii), $a_{p+mn}=2$, leads to a contradiction (since we have proved $a_{p+mn}=0$).
Hence we  have  $u_{j,p+mn}=2$. This finishes the proof of Property 1.

Next  we  prove  the `if' part of Theorem \ref{thm-1.7}. Assume that there exists a finite non-empty set $\M\subset \N$ so that properties (i)-(iv) in the theorem hold. Then it is direct to check that $\M$ is given by \eqref{e-M} and Properties 1-2 hold. Let $n\in \N$ and $\bx=x_1\ldots x_n\in \{1,\ldots, k\}^*$.
 We show below that Properties (i)-(ii) in Theorem \ref{thm-1.3} hold, from which $a+\F_\Phi\subseteq \C$ follows.

Let $p\in \N$ and consider the $p$-th column $(v_{1,p}, v_{2,p}, \ldots, v_{n, p})^T$ in the matrix
$V(\bx)$.  Assume that this vector contains at least one non-zero entry.
Let $s$ be the smallest integer so that $v_{s,n}\neq 0$.
Without loss of generality,   assume $v_{s,p}=2$.
We show below that $a_p=0$, i.e., property (ii) in Theorem \ref{thm-1.3} holds.
 According to  the definition of $V(\bx)$, we have $u_{x_i, p-(i-1)m}=0$ for $i<s$ and is not equal to $0$ for $i=s$.
 Now since $u_{x_{s}, p-(s-1)m}\neq 0$, by Properties 1-2, we have
$$u_{j, p-(i-1)m}=u_{x_{s}, p-(s-1)m}\neq 0$$ if $i<s$ and  $j>k'$.
It forces that $x_i\leq k'$ for each $i<s$. Hence $u_{x_s,p-(s-1)m}=v_{s, p}=2$.
By Properties 1-2, we have $a_{p}=0$.

Now we show that property (i) in Theorem  \ref{thm-1.3}
also holds for the vector  $$(v_{1,p}, v_{2,p}, \ldots, v_{n, p})^T,$$ i.e.,
if we delete all entries $0$ from this vector, neither two consecutive letters $2$
nor  two consecutive letters $\overline{2}$ appear in the new vector.
Assume this is not true.
Without loss of generality, assume that two consecutive letters $2$ appear.
That is, there exist $1\leq s< t\leq k$ such that $v_{s, p}=v_{t,p}=2$ and $v_{i, p}=0$ for all $s<i<t$.
Hence
 $u_{x_s, p-(s-1)m}\neq 0$,  $u_{x_t, p-(t-1)m}\neq 0$ and $u_{x_i, p-(i-1)m}=0$ for $s<i<t$.
 Since $u_{x_t, p-(t-1)m}\neq 0$, by Properties 1-2, we have
 \begin{equation}
\label{e-7.1''}
u_{j, p-(j'-1)m}=
\left\{
\begin{array}{ll}
0, & \mbox{ if $j\leq k'$ and $j'<t$},\\
u_{x_t, p-(t-1)m}, & \mbox{ if $j> k'$ and $j'<t$}.\\
\end{array}
\right.
\end{equation}
 Since $u_{x_s, p-(s-1)m}\neq 0$, by \eqref{e-7.1''},
  we have $x_s>k'$ and $u_{x_s, p-(s-1)m}=u_{x_t, p-(t-1)m}$. Furthermore,
since $u_{x_i, p-(i-1)m}=0$ for $s<i<t$, by \eqref{e-7.1''}, $x_i\leq k'$ for $s<i<t$.
Since $x_s>k'$ and $x_i\leq k'$ for $s<i<t$, we have
  $(v_{s, p},v_{t,p})$ equals $(u_{x_s, p-(s-1)m}, \overline{u_{x_t, p-(t-1)m}})$ or $(\overline{u_{x_s, p-(s-1)m}}, {u_{x_t, p-(t-1)m}})$, i.e.,
$(2,\overline{2})$ or $(\overline{2},2)$. It leads to a contradiction. This finishes the proof of the theorem.
\end{proof}

\section{General self-similar subsets of $\C$}
\label{S-6}
In this section we characterize all the  self-similar subsets of $\C$. The main results are Theorems \ref{thm-7.1} and \ref{thm-7.2}, which claim that   for a given pair $(a, \Phi)\in \C\times \cF$,  there is a finite algorithm (depending on  the contraction ratios of  maps in $\Phi$)  to judge whether $a+\F_\Phi\subset \C$.

Let $a\in \C$ and $\Phi=\{\phi_i\}_{i=1}^k\in \f$,
where the maps $\phi_i$ are of the form $\phi_i(x)=s_i3^{-m_i}x+d_i$ as in \eqref{e-0}. Throughout this section, we
set
\begin{equation}
\label{e-8.1}
\begin{split}
&\ell:=\mbox{gcd} (m_i:\; 1\leq i\leq k,\; d_i=0),\\
&r:=\mbox{ gcd} (m_1,\ldots, m_k),\\
&L:=\left(\min_{1\leq i\leq k}\frac{m_i}{r}-1\right)\left(\max_{1\leq j\leq k}\frac{m_j}{r}-1\right), \\
&D:=\max_{1\leq i\leq k}m_i, \\
\end{split}
\end{equation}
and
\begin{equation}
\label{e-8.2}
\Lambda:=\left\{\sum_{i=1}^k y_im_i:\; y_i\in \N\cup \{0\} \mbox{ for }1\leq i\leq k\right\},
\end{equation}
where $\mbox{gcd}$ means  greatest common divisor.
By Lemma \ref{lem-7.1},
\begin{equation}
\label{e-7.2'}
ry\in \Lambda\; \mbox{ for any integer }y\geq L.
\end{equation}

Let $a=\sum_{n=1}^\infty a_n 3^{-n}$ and $d_i=\sum_{n=1}^\infty u_{i,n} 3^{-n}$ ($i=1,\ldots, k$) be the intrinsic ternary expansions of $a, d_1,\ldots, d_k$. Then the intrinsic translation matrix of $\Phi$ is of the form $U_\Phi=(u_{i, n})_{1\leq i\leq k, \; n\geq 1}$.

\subsection{Necessary conditions}

In this subsection we  give some necessary conditions  so that  $a+\F_\Phi\subseteq \C$.
We begin with the following lemma.
\begin{lem}
\label{lem-7.2}
Assume that $a+\F_\Phi\subseteq \C$. Suppose that two entries $u_{i,n}$ and $u_{j,n'}$ of $U_\Phi$ satisfy that
\begin{equation}
\label{e-n1}
u_{i,n}\neq 0,\quad u_{j, n'}\neq 0\quad \mbox{and}\quad n-m_i-n'\in \Lambda.
\end{equation}
Then $(u_{i,n+tm_1})_{t\geq 0}$ is a constant sequence.
\end{lem}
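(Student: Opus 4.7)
The plan is to apply Theorem~\ref{thm-1.3} to two carefully chosen families of words, first pinning down $a_{n+tm_1}$ and then constraining $u_{i,n+tm_1}$ through it. Since $n-m_i-n'\in\Lambda$, I can fix a (possibly empty) word $\bw\in\{1,\ldots,k\}^*$ with $\sum_{p=1}^{|\bw|} m_{w_p}=n-m_i-n'$; this $\bw$ is what allows me to exploit the hypothesis $u_{j,n'}\neq 0$.

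First I would consider the family $\bx_t:=1^t i\bw j$ for $t\geq 0$. Because $d_1=0$ (so $\bu_1\equiv 0$) and $s_1=1$, the first $t$ rows of $V(\bx_t)$ vanish identically; row $t+1$ is $\bu_i$ preceded by $tm_1$ zeros, and the last row is $\pm\bu_j$ preceded by $tm_1+(n-n')$ zeros. Reading the column indexed $n+tm_1$: row $t+1$ contributes $u_{i,n}\neq 0$, the last row contributes $\pm u_{j,n'}\neq 0$, and since the preceding rows are zero, $u_{i,n}$ is the first non-zero entry in this column. Theorem~\ref{thm-1.3}(ii) then forces
$$a_{n+tm_1}=\begin{cases} 0, & \mbox{if } u_{i,n}=2,\\ 2, & \mbox{if } u_{i,n}=\overline{2}, \end{cases}$$
uniformly in $t\geq 0$.

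Second, I would apply the plus-admissibility criterion (Theorem~\ref{thm-1.3} applied to the one-letter word $\bx=i$, equivalently Lemma~\ref{lem-5.2}(i)): the $(n+tm_1)$-th column of $V(i)$ consists solely of $u_{i,n+tm_1}$ in row $1$, whence $u_{i,n+tm_1}+a_{n+tm_1}\in\{0,2\}$. Combined with the first step, this forces $u_{i,n+tm_1}\in\{0,u_{i,n}\}$ for every $t\geq 0$.

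The remaining, most delicate step is to exclude the value $0$. For this I would examine a richer word such as $\bx=1^ti\bw j\,1^si$ (or an iterated variant) and locate a column of $V(\bx)$ in which $u_{i,n}$, $\pm u_{j,n'}$, and $u_{i,n+tm_1}$ all occur through different rows; the alternation required by Theorem~\ref{thm-1.3}(i) on the reduced column sequence, together with the value of $a_{n+tm_1}$ already pinned down, should then preclude $u_{i,n+tm_1}=0$. The main obstacle will be handling the intermediate $\bw$-rows, whose contributions to the target column are not automatically zero; the argument must either choose $\bw$ so those contributions vanish or verify that any non-zero intermediate entries still respect the alternation pattern and are compatible with the pinned-down value of $a_{n+tm_1}$. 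Once $u_{i,n+tm_1}=u_{i,n}$ is established for all $t\geq 0$, constancy of $(u_{i,n+tm_1})_{t\geq 0}$ follows.
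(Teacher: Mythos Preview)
Your first two steps are fine and essentially match the paper: using $V(1^t i)$ (your $\bx_t$ is overkill but harmless) pins $a_{n+tm_1}$ to the value dictated by $u_{i,n}$, and then plus-admissibility of $(\ba,\bu_i)$ gives $u_{i,n+tm_1}\in\{0,u_{i,n}\}$.

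The gap is in your third step, and it is a genuine missing idea rather than a routine detail. The word $1^t i\bw j\,1^s i$ does not place $u_{i,n+tm_1}$ anywhere in the column you want to read: with $1^t$ in front, row $t+1$ of $V(\bx)$ is $0^{tm_1}\bu_i$, so at column $n+tm_1$ that row contributes $u_{i,n}$, not $u_{i,n+tm_1}$; the trailing $i$ contributes $u_{i,\cdot}$ at an index unrelated to $n+tm_1$. So your proposed family never isolates the quantity you need to control.

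The correct choice is to put $i$ \emph{first} and insert $1^t$ \emph{after} it: take $\bx=i\,1^t\,\bw\,j$. Then row $1$ of $V(\bx)$ is $\bu_i$ itself, so the $(n+tm_1)$-th column reads $u_{i,n+tm_1}$ in its first entry; rows $2,\ldots,t+1$ are identically zero (since $d_1=0$); and because $s_1=1$, the remaining rows at column $n+tm_1$ coincide \emph{exactly} with the entries $v_2,\ldots,v_{|\bw|+2}$ of the $n$-th column of $V(i\bw j)$. This tail is independent of $t$, which is the crucial point.

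Your worry about the intermediate $\bw$-rows is then resolved not by making them vanish but by Theorem~\ref{thm-1.3}(i) applied to $V(i\bw j)$ at column $n$: since $v_1=u_{i,n}=2$ and $v_{|\bw|+2}=\pm u_{j,n'}\neq 0$, the first non-zero entry among $v_2,\ldots,v_{|\bw|+2}$ must be $\overline{2}$. Hence if $u_{i,n+tm_1}=0$, the first non-zero entry of the $(n+tm_1)$-th column of $V(i\,1^t\,\bw\,j)$ is $\overline{2}$, and Theorem~\ref{thm-1.3}(ii) forces $a_{n+tm_1}=2$, contradicting what you already established. So $u_{i,n+tm_1}=2$ for every $t\geq 0$.
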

\begin{proof}
We assume, without loss of generality, that $u_{i,n}=2$. (The case when $u_{i,n}=\overline{2}$ can be handled similarly.) Since $n-m_i-n'\in \Lambda$, there exist non-negative integers $y_1,y_2,\ldots, y_k$  such that $n-m_i-n'=y_1m_1+y_2m_2+\ldots+y_km_k$.

For any $t\geq 0$, since the $(n+tm_1)$-column of the matrix $V(1^{t}i)$ is $$(0,\ldots,0, u_{i,n})^T=(0,\ldots,0, 2)^T,$$
 we have $a_{n+tm_1}=0$ by  Theorem \ref{thm-1.3}(ii).

Let  $\bv:=(v_1,v_2,\ldots, v_{2+y_1+y_2+\cdots+y_k})^T$ be the $n$-th column of the matrix  $$V(i 1^{y_1}2^{y_2}\ldots k^{y_k}j).$$ Then  $v_1=u_{i,n}=2$ and $v_{2+y_1+y_2+\cdots+y_k}=u_{j,n'}$ or $\overline{u_{j,n'}}$.
By Theorem \ref{thm-1.3}(i), the first non-zero entry in the  vector $(v_2,\ldots, v_{2+y_1+y_2+\cdots+y_k})^T$
should be $\overline{2}$. Now let $t\geq 1$.
Notice that the $(n+tm_1)$-th column of the matrix  $V(i1^t1^{y_1}2^{y_2}\ldots k^{y_k}j)$ is of the form
$$\bv':=(u_{i,n+tm_1}, \underbrace{0,\ldots,0}_{t},  v_2,\ldots, v_{2+y_1+y_2+\cdots+y_k})^T,$$
where the last $(y_1+y_2+\cdots+y_k+1)$-entries of $\bv'$, i.e. $v_2,\ldots, v_{2+y_1+y_2+\cdots+y_k}$,  coincide that of  $\bv$.
Since $a_{n+tm_1}=0$, by Theorem \ref{thm-1.3}(ii), we have $u_{i, n+tm_1}\in \{0,2\}$.
If $u_{i,n+tm_1}=0$, then the first non-zero entry in the  vector $\bv'$ is $\overline{2}$, the same as that in the vector $(v_2,\ldots, v_{2+y_1+y_2+\cdots+y_k})^T$; hence by
Theorem \ref{thm-1.3}(ii), we have $a_{n+tm_1}=2$, leading to a contradiction.
Hence we must have $u_{i,n+tm_1}=2$, which completes the proof of the lemma.
\end{proof}

\begin{pro}
\label{pro-7.1}Assume that $a+\F_\Phi\subseteq \C$.
Then the sequence $(U_\Phi(n))_{n\geq 1}$ is eventually periodic with period $\ell$.
\end{pro}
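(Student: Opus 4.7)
The plan is to prove eventual periodicity with period $\ell=\gcd(m_j:d_j=0)$ in three steps: (i) extend Lemma \ref{lem-7.2} to allow any $m_j$ with $d_j=0$ in place of $m_1$, (ii) find, for each residue class modulo $r$, a non-zero entry of $U_\Phi$ to serve as a fixed witness, and (iii) combine the resulting period-$m_j$ constancies via a gcd argument.

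For step (i), I note that the proof of Lemma \ref{lem-7.2} uses the index $1$ only through $d_1=0$ (so that $\bu_1=0^\infty$) and $s_1=1$ (so that the construction $V(1^t\cdots)$ inserts zero rows with no sign flip in the subsequent rows). Condition (iii) in the definition of $\f$ forces every $j$ with $d_j=0$ to satisfy $s_j=1$, so the same proof applies verbatim with $1$ replaced by any such $j$; this yields that under the hypotheses of Lemma \ref{lem-7.2}, the sequence $(u_{i,n+tm_j})_{t\geq 0}$ is constant for every $j$ with $d_j=0$. For step (ii), by condition (iii) the matrix $U_\Phi$ has some non-zero entry, so I fix, for each residue $\rho\pmod r$ realised by such an entry, a witness $(j'_\rho,n'_\rho)$ with $u_{j'_\rho,n'_\rho}\neq 0$ and $n'_\rho\equiv \rho\pmod r$. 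Given any $(i,n)$ with $u_{i,n}\neq 0$ and $n$ sufficiently large, I set $\rho=n\bmod r$ (which is realised, with $(i,n)$ itself as witness) and observe that $n-m_i-n'_\rho$ is a non-negative multiple of $r$ (using $r\mid m_i$ and $n\equiv n'_\rho\pmod r$) of size at least $rL$, hence lies in $\Lambda$ by \eqref{e-7.2'}. Applying the extended Lemma \ref{lem-7.2} then gives $u_{i,n+tm_j}=u_{i,n}$ for every $t\geq 0$ and every $j$ with $d_j=0$.

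For step (iii), fix $i$ and a residue $\rho$ modulo $\ell$, and inspect $(u_{i,n})$ along $n\equiv\rho\pmod{\ell}$, $n$ large. If this subsequence is identically zero there is nothing to prove along this progression; otherwise pick such an $n^*$ with $u_{i,n^*}\neq 0$, and iterate the conclusion of step (ii) along each generator to obtain $u_{i,n^*+s}=u_{i,n^*}$ for every $s$ in the numerical semigroup generated by $\{m_j:d_j=0\}$. This semigroup has gcd $\ell$, so applying Lemma \ref{lem-7.1} to the generators $\{m_j:d_j=0\}$ shows that it contains every sufficiently large multiple of $\ell$; hence the subsequence is eventually constant at the value $u_{i,n^*}$. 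Taking the maximum of the resulting thresholds over the finitely many pairs $(i,\rho)$ yields a single $N$ beyond which $U_\Phi(n+\ell)=U_\Phi(n)$, as required. The main obstacle I anticipate is the residue-matching in step (ii): without pre-selecting witnesses across all residues modulo $r$, the hypothesis $n-m_i-n'\in\Lambda$ could fail for indices $n$ misaligned with a single fixed witness, making the extension in step (i) inapplicable and breaking the argument.
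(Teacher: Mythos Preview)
Your argument is correct. The extension in step (i) is valid for exactly the reason you give; the witness scheme in step (ii) works because each realised residue class modulo $r$ has a fixed finite $n'_\rho$, so a uniform threshold $n\geq \max_\rho n'_\rho+D+rL$ makes $n-m_i-n'_\rho\in\Lambda$ via \eqref{e-7.2'}; and the semigroup propagation in step (iii) is legitimate since every index reached is at least $n^*$ and carries the nonzero value $u_{i,n^*}$, so step (ii) re-applies at each stage.

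The paper's proof follows a different and shorter line. Instead of producing external witnesses across residue classes modulo $r$ and then filling the numerical semigroup generated by $\{m_j:d_j=0\}$, the paper first reduces to proving eventual $m_j$-periodicity for each individual $j$ with $d_j=0$ (the $\gcd$ of eventual periods is again an eventual period), and then argues by contradiction: if $(u_{i,p_0+nm_1})_n$ is not eventually constant it has infinitely many nonzero terms, and a pigeonhole on $n\bmod m_i$ gives two such terms whose column indices differ by a nonnegative multiple of $m_i$, so Lemma~\ref{lem-7.2} applies with the earlier term itself as the witness $n'$. This self-witnessing trick avoids both the residue bookkeeping modulo $r$ and the Frobenius bound from Lemma~\ref{lem-7.1}; your approach trades that economy for a more constructive description of the eventual constant value and its threshold.
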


\begin{proof}
Since $\ell=\mbox{gcd} (m_j:\; 1\leq j\leq k,\; d_j=0)$, it is sufficient to show that for each $j$ with $d_j=0$ (which forces that  $s_j=1$ by the definition of ${\mathcal F}$),
$$
(U_\Phi(n))_{n\geq 1} \mbox{ is eventually periodic with period $m_j$}.
$$
Without loss of generality, we prove the above statement for $j=1$.  Suppose on the contrary that this statement is false for $j=1$.
Then there exist $i\in \{1,\ldots, k\}$ and $1\leq p_0\leq m_1$ such that the sequence $(u_{i, p_0+nm_1})_{n=1}^\infty$
is not eventually periodic with period $1$. Consequently, there exist infinitely many $n$ such that $u_{i,p_0+nm_1}\neq 0$.
Hence by the pigeon hole principle, there exist two integers $n_1<n_2$ such that
$$u_{i,j+n_1m_1}\neq 0, \; u_{i, j+n_2m_1}\neq 0\; \mbox{ and } n_2\equiv n_1 ({\rm mod }\; m_i).$$
 Notice that $(p_0+n_2 m_1)-m_i-(p_0+n_1m_1)=(n_2-n_1)m_1$ is a multiple of $m_i$, and hence it belongs to $\Lambda$.
By Lemma \ref{lem-7.2},   $(u_{i, p_0+n_2m_1+tm_1})_{t\geq 0}$ is eventually periodic with period $1$,
which  contradicts the fact that $(u_{i, p_0+nm_1})_{n=1}^\infty$
is not eventually periodic with period $1$.
\end{proof}

\begin{lem}
\label{lem-7.4}
Assume that $a+\F_\Phi\subseteq \C$.
Suppose that  for some  $(i, n)\in \{1,\ldots, k\}\times \N$,
  \begin{equation}
\label{e-7.3}
u_{i,n}\neq u_{i,n+\ell}=u_{i, n+t\ell},\quad \forall\; t\geq 2.
\end{equation}
Then $U_\Phi(n')=(0,\ldots, 0)^T$ for $n'\geq 1$ satisfying $n-m_i-n'\in \Lambda$.
\end{lem}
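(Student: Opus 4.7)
I plan to argue by contradiction: suppose that $u_{j,n'}\neq 0$ for some $j\in\{1,\ldots,k\}$ and some $n'\geq 1$ with $n-m_i-n'\in\Lambda$. The key structural input, used throughout, is that $d_1=0$ (by condition (ii) in the definition of $\mathcal F$) forces $\ell\mid m_1$; writing $m_1=c\ell$ with $c\geq 1$, the hypothesis of the lemma yields the identity $u_{i,n+tm_1}=u_{i,n+tc\ell}=u_{i,n+\ell}$ for every $t\geq 1$.

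If $u_{i,n}\neq 0$, then Lemma~\ref{lem-7.2} applies directly to the pair $(i,n),(j,n')$ and gives that $(u_{i,n+tm_1})_{t\geq 0}$ is constant. Specializing to $t=1$ and combining with the identity above yields $u_{i,n}=u_{i,n+\ell}$, contradicting the hypothesis of the lemma.

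If instead $u_{i,n}=0$, set $\alpha:=u_{i,n+\ell}\in\{2,\overline 2\}$. Choose nonnegative integers $(y_1,\ldots,y_k)$ with $\sum_p y_pm_p=n-m_i-n'$, let $\mathbf{w}=1^{y_1}2^{y_2}\cdots k^{y_k}$, and write the $n$-th column of $V(i\mathbf{w}j)$ as $(0,v_2,\ldots,v_{s-1},v_s)^T$ with $v_s=\pm u_{j,n'}\neq 0$; let $p_0\geq 2$ denote the smallest index with $v_{p_0}\neq 0$. Mimicking the shift trick of Lemma~\ref{lem-7.2}'s proof, for any $t\geq 1$ the $(n+tm_1)$-th column of $V(i\,1^t\,\mathbf{w}\,j)$ equals $(\alpha,0,\ldots,0,v_2,\ldots,v_s)^T$: the row of $i$ reads $u_{i,n+tm_1}=\alpha$, the $1$'s contribute zeros, and the remaining rows reproduce $v_2,\ldots,v_s$ verbatim. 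Theorem~\ref{thm-1.3}(i) applied to the first two non-zero entries $\alpha$ and $v_{p_0}$ then forces $v_{p_0}=-\alpha$.

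To close the contradiction, I would use the numerical-semigroup fact (the analogue of Lemma~\ref{lem-7.1} for the generators $\{m_p:p\in J_0\}$, where $J_0:=\{p:d_p=0\}$) that $T\ell\in\Lambda_0:=\{\sum_{p\in J_0}y_pm_p:y_p\geq 0\}$ for every sufficiently large $T$. Fix such a $T$ and a word $\mathbf{w}''$ over $J_0$ with $\sum m_{w''_p}=T\ell$. Since condition (iii) in the definition of $\mathcal F$ forces $s_p=1$ whenever $d_p=0$, a direct shift computation shows that the $(n+T\ell)$-th column of $V(\mathbf{w}''\,i\,\mathbf{w}\,j)$ equals $(0,\ldots,0,0,v_2,\ldots,v_s)^T$: the rows of $\mathbf{w}''$ vanish because $w''_p\in J_0$, the row of $i$ reads $u_{i,n}=0$, and the subsequent rows reproduce $v_2,\ldots,v_s$. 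Theorem~\ref{thm-1.3}(ii) thus determines $a_{n+T\ell}$ via the sign of $v_{p_0}=-\alpha$, while the same theorem applied to the one-letter word $V(i)$ at column $n+T\ell$ determines $a_{n+T\ell}$ via the sign of $\alpha$; these opposite assignments give the desired contradiction. The main obstacle will be the bookkeeping of shifts needed to verify the two claimed column shapes above, together with the numerical-semigroup step supplying $\mathbf{w}''$.
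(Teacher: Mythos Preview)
Your argument is correct, and in Case $u_{i,n}=0$ it follows a genuinely different route from the paper's proof. Both proofs first dispose of the case $u_{i,n}\neq 0$ identically via Lemma~\ref{lem-7.2}. For the remaining case, the paper constructs the word $i^{2\ell}1i\bw j$ (with $\bw=1^{y_1}\cdots k^{y_k}$) and exploits $s_i^{2\ell}=1$: from $V(i^{2\ell+1})$ it extracts that the last non-zero entry of $(t_1,\dots,t_{2\ell})$ is $\overline{2}$, from $V(i1\bw j)$ that the first non-zero entry of the $\tilde v$-block is $\overline{2}$, and then reads off two consecutive $\overline{2}$'s in the concatenated column, contradicting Theorem~\ref{thm-1.3}(i). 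You instead pin down $v_{p_0}=-\alpha$ from $V(i1^t\bw j)$ and then obtain \emph{conflicting values of $a_{n+T\ell}$} from Theorem~\ref{thm-1.3}(ii), comparing $V(\bw''i\bw j)$ with $V(i)$; here $\bw''$ is a word over $J_0=\{p:d_p=0\}$ of total weight $T\ell$, whose existence for large $T$ follows from Lemma~\ref{lem-7.1} since $\gcd(m_p:p\in J_0)=\ell$. Your approach is clean and modular (it recycles the shift trick of Lemma~\ref{lem-7.2} and then closes via $a$), at the cost of one extra appeal to the numerical-semigroup lemma; the paper's approach avoids that appeal by handling the sign of $s_i$ via the even power $i^{2\ell}$ and stays entirely within condition~(i).
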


\begin{proof} Let $n'\in \N$ so that  $n-m_i-n'\in \Lambda$.
We need to prove that  $u_{j, n'}=0$ for every $1\leq j\leq k$.

Assume on the contrary that  $u_{j, n'}\neq 0$ for some $1\leq j\leq k$.
First we have $u_{i,n}=0$;  otherwise, if $u_{i,n}\neq 0$ then \eqref{e-n1} fulfils and hence by Lemma \ref{lem-7.2},
$u_{i,n}=u_{i,n+tm_1}$ for each $t\geq 1$,  which contradicts the assumption \eqref{e-7.3} since $m_1$ is a multiple of
$\ell$.

As $u_{i,n}=0$,  by \eqref{e-7.3}, $u_{i,n+\ell}=u_{i, n+t\ell}\neq 0$ for all $t\geq 2$.
Without loss of generality assume that $u_{i, n+\ell}=2$.
Then $u_{i, n+m_1}=u_{i, n+m_1+2\ell m_i}=2$, again using the fact that $m_1$ is a  multiple of $\ell$.

Let $(t_1,\ldots, t_{2\ell+1})^T$ be the  $(n+m_1+2\ell m_i)$-th column
of the matrix $V(i^{2\ell+1})$.
Since $u_{i, n+m_1}=u_{i, n+m_1+2\ell m_i}=2$ and $s_i^{2\ell}=1$, we have $t_1=t_{2\ell+1}=2$.
By Theorem \ref{thm-1.3}, the last non-zero entry in $(t_1,\ldots, t_{2\ell})^T$ takes the value $\overline{2}$.

Since $n-m_i-n'\in \Lambda$, there exist non-negative integers $y_1,\ldots, y_k$
such that $n-m_i-n'=y_1m_1+y_2m_2+\ldots+y_km_k$.
Let $\bw$ denote the word $1^{y_1}2^{y_2}\cdots k^{y_k}$.  Let
$$
\bv=(v_1,\ldots, v_{y_1+\cdots +y_k+1})^T
$$
be  the $\left(n'+\sum_{i'=1}^k y_{i'}m_{i'}\right)$-th column
of the matrix
$V(\bw j)$.  Since $u_{j, n'}\neq 0$, we have $v_{y_1+\cdots +y_k+1}\neq 0$.

Observe that the $(n+m_1)$-th column of the matrix $V(i1\bw j)$ is of the form
  $$(2, 0, \tilde{v}_1,\ldots, \tilde{v}_{y_1+\ldots+ y_k+1})^T,$$
where $(\tilde{v}_1,\ldots, \tilde{v}_{y_1+\ldots+ y_k+1})^T=\bv$ if $s_i>0$,
and $-{\bv}$ if $s_i<0$.
By Theorem \ref{thm-1.3}(i), the first non-zero entry in  $(\tilde{v}_1,\ldots, \tilde{v}_{y_1+\ldots+ y_k+1})^T$
must be $\overline{2}$.
Next consider the $(n+m_1+2\ell m_i)$-th column of the matrix $V(i^{2\ell} 1 i \bw j)$, which is of the form
\begin{equation}
\label{e-7.4}
(t_1,\ldots, t_{2\ell}, 0,0, \tilde{v}_1,\ldots, \tilde{v}_{y_1+\ldots+ y_k+1})^T.
\end{equation}
Since  the last non-zero entry in $(t_1,\ldots, t_{2\ell})^T$ is $\overline{2}$ and the first non-zero entry in $$(\tilde{v}_1,\ldots, \tilde{v}_{y_1+\ldots+ y_k+1})^T$$ is $\overline{2}$, the vector in \eqref{e-7.4} contains two consecutive entries $\overline{2}$ after deleting all zero entries, leading to a contradiction with Theorem \ref{thm-1.3}(i). This completes the proof of  the lemma. \end{proof}

\begin{rem}
\label{rem-7.5}
The conclusion of Lemma \ref{lem-7.4} holds if we replace the assumption $n-m_i-n'\in \Lambda$ by the following (stronger) condition:
\begin{equation}
\label{e-n2}
n-n'\geq m_i+Lr\; \mbox{ and } \; n'\equiv n ({\rm mod}\; r).
\end{equation}
Indeed, according to  \eqref{e-7.2'}, the condition \eqref{e-n2} implies that  $n-m_i-n'\in \Lambda$.
\end{rem}

\begin{lem}
\label{lem-6.3}
Assume that $a+\F_\Phi\subseteq \C$.  Suppose that $u_{i,n}\neq 0$ for some  $(i, n)\in \{1,\ldots, k\}\times \N$.
Then $(a_{\tilde{n}+t\ell})_{t\geq 0}$ is a constant sequence for each  $\tilde{n}$ satisfying that
  \begin{equation}
\label{e-n6}\tilde{n}-n\geq Lr \quad \mbox{and}\quad \tilde{n}\equiv n({\rm mod}\; r).
\end{equation}

\end{lem}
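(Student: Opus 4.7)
Plan. We may assume $u_{i,n}=2$; the case $u_{i,n}=\overline{2}$ is entirely symmetric (with $0$ and $2$, and the roles of positive and negative columns, interchanged). Then Theorem~\ref{thm-1.3}(ii) applied to the single-row matrix $V(i)=\bu_i$ already yields $a_n=0$, and the goal is to upgrade this to $a_{\tilde n+t\ell}=0$ for every $t\ge 0$, which forces $(a_{\tilde n+t\ell})_{t\ge 0}$ to be constant.

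The first reduction exploits the divisibility $r\mid\ell$: since $d_1=0$ puts $m_1$ in the set defining $\ell$, and $r\mid m_1$ by definition of $r$, we get $r\mid\ell$. Consequently, for every $t\ge 0$ the index $\hat n:=\tilde n+t\ell$ still satisfies $\hat n\equiv n\pmod r$ and $\hat n-n\ge Lr$, and \eqref{e-7.2'} places $\hat n-n$ in $\Lambda$. Writing $\hat n-n=\sum_{j=1}^k y_jm_j$ with $y_j\in\N\cup\{0\}$, set $\bw=1^{y_1}2^{y_2}\cdots k^{y_k}$ and consider the matrix $V(\bw i)$. By Lemma~\ref{lem-simple}(i) the last entry of its $\hat n$-th column equals $s_{\bw}u_{i,n}=s_{\bw}\cdot 2$, and the intention is to invoke Theorem~\ref{thm-1.3} applied to $V(\bw i)$, or to a suitable left-extension obtained by prepending factors such as $1^T$ (exploiting $d_1=0$ to wipe out extra rows) or $i^{2T+1}$ (exploiting $s_i^2=1$ to control signs), so as to force the first non-zero entry of the relevant column to be $+2$. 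Once this is done, Theorem~\ref{thm-1.3}(ii) immediately delivers $a_{\hat n}=0$.

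The main obstacle is analysing the intermediate entries of the $\hat n$-th column of $V(\bw i)$, which have the form $s_{x_1}\cdots s_{x_{p-1}}u_{x_p,\hat n-\sum_{t<p}m_{x_t}}$ for various intermediate positions strictly between $n$ and $\hat n$; their signs depend on partial products of the $s_{x_t}$ and their zero/non-zero pattern is governed by the delicate structure of $U_\Phi$. To handle them I plan to combine the three structural results already established in Section~\ref{S-6}: Lemma~\ref{lem-7.2}, which propagates $u_{i,n}=2$ along the arithmetic progression of step $m_1$; Proposition~\ref{pro-7.1}, which gives eventual $\ell$-periodicity of $(U_\Phi(n))_{n\ge 1}$; and Lemma~\ref{lem-7.4} together with Remark~\ref{rem-7.5}, which force $U_\Phi(n'')$ to vanish at any intermediate $n''$ within $\Lambda$-reach of $\hat n$ whenever a hypothetical non-trivial value there would contradict $\ell$-periodicity. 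Combined, these ingredients, supplemented by an inductive bookkeeping on the intermediate rows, yield that the first non-zero entry of the relevant column is indeed $+2$, so $a_{\hat n}=0$. Since $t\ge 0$ was arbitrary, $(a_{\tilde n+t\ell})_{t\ge 0}$ is identically $0$ and, in particular, constant.
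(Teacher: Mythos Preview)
Your plan has a genuine error: the target you set, namely $a_{\tilde n+t\ell}=0$ for all $t\ge 0$, is in general false. Constancy of the sequence $(a_{\tilde n+t\ell})_{t\ge 0}$ does not imply that the common value is $0$, even under the normalization $u_{i,n}=2$. A concrete instance is $\Phi=\{3^{-2}x,\;-3^{-1}x+2/3\}$, so that $k=2$, $m_1=2$, $m_2=1$, $s_2=-1$, $\bu_2=20^\infty$; here $r=1$, $L=0$, $\ell=m_1=2$. With $a=\pi((02)^\infty)=1/4$ one has $a+\F_\Phi\subseteq\C$. Take $(i,n)=(2,1)$ and $\tilde n=2$: then $\tilde n-n=1=y_2m_2$ so $\bw=2$, and the $\tilde n$-th column of $V(\bw i)=V(22)$ is $(0,\overline 2)^T$. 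Its first non-zero entry is $\overline 2$, whence $a_{\tilde n}=2$, and indeed $(a_{2+2t})_{t\ge 0}\equiv 2$. So the assertion that one can ``force the first non-zero entry of the relevant column to be $+2$'' is simply wrong, and the elaborate programme of analysing the intermediate entries via Lemmas~\ref{lem-7.2}, \ref{lem-7.4} and Proposition~\ref{pro-7.1} is aimed at an impossible conclusion.

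The paper's argument sidesteps the ``main obstacle'' you identified entirely. One does not need to know \emph{which} value the first non-zero entry $v'$ of the $\tilde n$-th column of $V(\bw i)$ takes; one only needs that it is the \emph{same} along the relevant arithmetic progression. The key observation is this: for any $j$ with $d_j=0$ (hence $s_j=1$ by the definition of $\cF$), prepending $j^t$ to $\bw i$ adds $t$ zero rows on top and shifts the column index by $tm_j$, so the $(\tilde n+tm_j)$-th column of $V(j^t\bw i)$ is $(0,\ldots,0,\bv^T)^T$ with the same first non-zero entry $v'$. Theorem~\ref{thm-1.3}(ii) then gives $a_{\tilde n+tm_j}=a_{\tilde n}$ for all $t\ge 0$. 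Since this holds for every $j$ with $d_j=0$ and $\ell=\gcd(m_j:d_j=0)$, the sequence $(a_{\tilde n+t\ell})_{t\ge 0}$ is constant. No analysis of intermediate entries, no use of Lemmas~\ref{lem-7.2} or \ref{lem-7.4}, is required.
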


\begin{proof}
  Since $\ell=\mbox{gcd} (m_j:\; 1\leq j\leq k,\; d_j=0)$, it is sufficient to show that for each $j$ with $d_j=0$ (which forces that  $s_j=1$ by the definition of ${\mathcal F}$),
$
(a_{\tilde{n}+t\ell})_{t\geq 0}$   is  periodic with period ${m_j}/{\ell}$ for each $\tilde{n}$ satisfying \eqref{e-n6}.  This is equivalent to prove that
\begin{equation}
\label{e-n10}
(a_{\tilde{n}+tm_j})_{t\geq 0}  \mbox{ is a constant sequence}
\end{equation}
for each $\tilde{n}$ satisfying \eqref{e-n6}.
Without loss of generality, below we prove \eqref{e-n10}  for $j=1$.

Fix $\tilde{n}$ so that \eqref{e-n6} fulfils.
By \eqref{e-7.2'}, there exist non-negative integers $y_1,\ldots, y_k$ such that
  $\tilde{n}-n=y_1m_1+\cdots +y_km_k$. Write
 $\bw:=1^{y_1}2^{y_2}\cdots k^{y_k}$. Let
$$
\bv=(v_1,\ldots, v_{y_1+\cdots +y_k+1})^T
$$
be  the $\tilde{n}$-th column
of the matrix
$V(\bw i)$.  Since $u_{i, n}\neq 0$, we have $v_{y_1+\cdots +y_k+1}\neq 0$. Let $v'$ be the first non-zero entry in the vector $\bv$. By
Theorem \ref{thm-1.3}(ii), $a_{\tilde{n}}$ is determined by $v'$; that is,  $a_{\tilde{n}}=0$ if $v'=2$, and $2$  otherwise.

Now let $t\geq 1$.
Notice that the $(\tilde{n}+tm_1)$-th column of the matrix  $V(1^t\bw i)$ is of the form
$$\bv_t:=(\underbrace{0,\ldots,0}_{t}, v_1, v_2,\ldots, v_{y_1+y_2+\cdots+y_k+1})^T.$$
The first non-zero entry in $\bv_t$ is  $v'$. Again, by
Theorem \ref{thm-1.3}(ii), $a_{\tilde{n}+tm_1}$ is determined by $v'$. It follows that   $a_{\tilde{n}+tm_1}=a_{\tilde{n}}$. This completes the proof of the lemma.
\end{proof}

\subsection{Case $r=1$ }
Set
\begin{equation}
\label{e-n7}
n_0=\inf \left\{n:\; U_\Phi(n)\neq \{0, \ldots, 0\}^T\right\}.
\end{equation}

\begin{thm}
\label{thm-7.1}
 Assume that $r=1$. Then   $a+\F_\Phi\subseteq \C$ if and only if the following properties hold:
 \begin{itemize}
\item[(1)] The sequence $(U_\Phi(n))_{n\geq n_0+L+D}$ is periodic with period $\ell$.
\item[(2)] The sequence
 $(a_{n})_{n\geq n_0+L}$
  is periodic with period $\ell$.
 \item[(3)] Properties (i)-(ii) in Theorem \ref{thm-1.3} hold for those pairs  $(p,\bx)\in \N\times \{1,\ldots, k\}^*$ satisfying that
 $$n_0\leq p< n_0+L+D+2\ell D
 \quad\mbox{ and } \quad |\bx|\leq  p-n_0+1.
 $$
  \end{itemize}
\end{thm}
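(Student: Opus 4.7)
For the only if direction, assume $a + \F_\Phi \subseteq \C$. Property (3) is immediate from Theorem \ref{thm-1.3}. For (1), Proposition \ref{pro-7.1} gives eventual $\ell$-periodicity of $(U_\Phi(n))_{n \geq 1}$, so it remains to verify that the periodic regime begins by index $n_0 + L + D$. Suppose not: take the largest $n \geq n_0 + L + D$ with $U_\Phi(n) \neq U_\Phi(n+\ell)$; by maximality, $U_\Phi(n+t\ell) = U_\Phi(n+\ell)$ for all $t \geq 1$. Picking $i$ with $u_{i,n} \neq u_{i, n+\ell}$ and applying Lemma \ref{lem-7.4} through Remark \ref{rem-7.5} (using $r = 1$) forces $U_\Phi(n') = \mathbf{0}$ for every $n' \leq n - m_i - L$; since $n \geq n_0 + L + m_i$, this includes $n' = n_0$, contradicting the definition of $n_0$. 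For (2), pick $i_0$ with $u_{i_0, n_0} \neq 0$; Lemma \ref{lem-6.3} with $r = 1$ then yields constancy of $(a_{\tilde n + t\ell})_{t \geq 0}$ for every $\tilde n \geq n_0 + L$, which is precisely the desired $\ell$-periodicity of $a$.

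For the if direction, assume (1)--(3) and verify the matching properties of Theorem \ref{thm-1.3} for every $(\bx, p) \in \{1,\ldots,k\}^* \times \N$; write $q_j = p - \sum_{t=1}^{j-1} m_{x_t}$ throughout. When $p < n_0$, each $q_j \leq p < n_0$ forces $u_{x_j, q_j} = 0$ by the choice of $n_0$, so the $p$-th column of $V(\bx)$ vanishes and the conditions hold vacuously. For $p \geq n_0$, I would truncate $\bx$ to $x_1 \ldots x_J$ with $J = \max\{j : q_j \geq n_0\}$, which preserves the nonzero entries of the column and yields $J \leq p - n_0 + 1$. If additionally $p < n_0 + L + D + 2\ell D$, assumption (3) applies directly.

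The remaining case $p \geq n_0 + L + D + 2\ell D$ I would handle by induction on $p$, reducing $(\bx, p)$ to a pair with strictly smaller $p$ via two matching-preserving operations. First, if some letter $i_0 \in I_0 := \{i : d_i = 0\}$ appears in $\bx$, delete it to form $\bx'$ and set $p' = p - m_{i_0}$; invoking Lemma \ref{lem-simple} together with $s_{i_0} = 1$, $\bu_{i_0} = 0^\infty$, $\ell \mid m_{i_0}$, and periodicity of $U_\Phi$ and $a$ (via (1) and (2)), the $p$-th column of $V(\bx)$ and the $p'$-th column of $V(\bx')$ have identical nonzero patterns and $a_p = a_{p'}$. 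Second, when $\bx$ contains no $I_0$-letter, a direct shift $p \to p - \ell$ is valid whenever every $q_j$ lies at least $\ell$ inside the periodic zone (so (1) preserves the column and (2) preserves $a$); otherwise I would use a pigeonhole argument based on $r = 1$ and Lemma \ref{lem-7.1} to locate a subword $\bx_B$ in the periodic zone with $s_{\bx_B} = 1$ and $\sum m_{(\bx_B)_t} \equiv 0 \pmod{\ell}$ whose removal is matching-neutral, by the column-suffix property of Lemma \ref{lem-simple}(iii).

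The main obstacle is verifying that the block deletion in the second reduction truly preserves matching: one must control both the sign flips induced by $s_{\bx_B}$ and the column-index shifts of the rows flanking $\bx_B$, all using only the $\ell$-periodicity guaranteed on the range $n \geq n_0 + L + D$. The buffer $2\ell D$ in the threshold is calibrated precisely so that in the no-$I_0$-letter case the constraint $|\bx| \leq p - n_0 + 1$ leaves every $q_j$ deep enough for at least one of the two reductions to apply, ensuring that finitely many steps land the pair inside the range covered by (3) and close the induction.
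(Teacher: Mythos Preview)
Your ``only if'' direction is essentially identical to the paper's: the same use of Proposition~\ref{pro-7.1}, Lemma~\ref{lem-7.4} via Remark~\ref{rem-7.5}, and Lemma~\ref{lem-6.3}.

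Your ``if'' direction takes a different route from the paper, and it has a genuine gap in the first reduction. You propose to delete an $I_0$-letter $x_j=i_0$ and pass from $(\bx,p)$ to $(\bx',p')$ with $p'=p-m_{i_0}$, claiming the columns have identical nonzero pattern. The rows \emph{after} position $j$ do match exactly (as you observe, using $s_{i_0}=1$ and the suffix property). But the rows $1,\ldots,j-1$ only match after an index shift of $m_{i_0}$: in $V(\bx')$ at column $p'$ the $r$-th entry reads $u_{x_r,\,q_r-m_{i_0}}$ rather than $u_{x_r,\,q_r}$. For these to coincide you need \emph{both} $q_r$ and $q_r-m_{i_0}$ to lie in the periodic range $[n_0+L+D,\infty)$. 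Your truncation only gives $q_r\ge n_0$, so when there are many non-$I_0$-letters before position $j$ the indices $q_{j-1},q_{j-2},\ldots$ may sit below $n_0+L+D$, and the two columns can genuinely differ. The buffer $2\ell D$ does not help here because it bounds nothing about the position of the first $I_0$-letter.

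The paper avoids this by arguing contrapositively: if matching fails somewhere, then it fails in one of two \emph{minimal} forms --- either a column $(2,0,\ldots,0,2)^T$ (two consecutive nonzeros of the same sign after deleting zeros) or a column $(0,\ldots,0,u_p)^T$ with $(a_p,u_p)$ not plus-admissible --- and then shows such a bad column persists at a strictly smaller $p$. The key point is that a bad column of these forms has at most two nonzero entries, so the reduction need only match the first entry and the last (suffix) block. Applying pigeonhole to the first $2\ell+1$ letters of $\bx$ (on the pair $(s_{x_1}\cdots s_{x_i},\,\sum_{t\le i}m_{x_t}\bmod\ell)$) finds $i<j\le 2\ell+1$ with matching sign and residue; deleting $x_{i+1}\ldots x_j$ shifts the head indices by at most $2\ell D$, which is exactly the buffer, while the tail is preserved verbatim by Lemma~\ref{lem-simple}(iii). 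Your direct inductive scheme would need to reproduce \emph{every} entry of an arbitrary column, and that is what fails.
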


\begin{proof}
We first prove the `only if' part of the theorem.
Assume that $a+\F_\Phi\subseteq \C$.
By Theorem \ref{thm-1.3}, (3) holds.

To show (1), suppose on the contrary that (1) is false. By Proposition \ref{pro-7.1}, the sequence of the column vectors of $U_\Phi$ is eventually periodic with period $\ell$. Hence there exist $i\in \{1,\dots, k\}$
and $n\geq  n_0+L+D$ such that
$$u_{i,n}\neq u_{i,n+\ell}=u_{i,n+t\ell},\quad \forall t\geq 2.$$
Then  we have
$U_\Phi(n_0)=(0,\cdots, 0)^T$ by Remark \ref{rem-7.5},
because $n\equiv n_0 ({\rm mod}\; 1)$ and $n-n_0\geq L+D\geq m_i+Lr$.
This  contradicts \eqref{e-n7}, and completes the proof of (1).

Next we prove (2). By the definition of $n_0$,  there exists
   $i\in \{1,\dots, k\}$ so that $u_{i,n_0}\neq 0$. Since $r=1$, by Lemma \ref{lem-6.3},  $(a_{\tilde{n}+t\ell})_{t\geq 0}$ is a constant sequence
 for each $\tilde{n}\geq n_0+L$, from which (2) follows. This completes the proof of  the `only if' part of the theorem.

Now we turn to the proof of the `if' part of the theorem.
Suppose that $a+\F_\Phi$ is not a subset of $\C$.
  By Theorem \ref{thm-1.3}, at least one of the following two scenarios occurs:
 \begin{itemize}
 \item[(a)] There exist $p\in \N$ and  $\bx\in \{1,\ldots, k\}^*$
 such that the $p$-th column of $V(\bx)$ is of the form
 $(2, \underbrace{0,\ldots, 0}_{|\bx|-2},2)^T$ or $(\overline{2}, \underbrace{0,\ldots, 0}_{|\bx|-2},\overline{2})^T$.
 \medskip
 \item[(b)] There exist $p\in \N$ and  $\bx\in \{1,\ldots, k\}^*$  such that the $p$-th column of $V(\bx)$ is of the form
 $(\underbrace{0,\ldots, 0}_{|\bx|-1},u_p)^T$ with $u_p\neq 0$ and
  $(a_p, u_p)$ is not plus-admissible.
 \end{itemize}

First suppose that (a) occurs for some pair $(p, \bx)$. We may assume that $p$ is the smallest in the sense that if  (a) happens for another pair $(p', \bx')$, then $p\leq p'$.  and $\bx=x_1\ldots x_t\in \{1,\ldots, k\}^*$.    Then $$u_{x_1,p}\neq 0\; \mbox{ and }\; u_{x_t, p-v}\neq 0$$
with $v:=m_{x_1}+\ldots+ m_{x_{t-1}}$. The second condition and \eqref{e-n7} imply that $p-(m_{x_1}+\ldots+ m_{x_{t-1}})\geq n_0$ and thus
$$|\bx|=t\leq m_{x_1}+\ldots+ m_{x_{t-1}}+1\leq p-n_0+1.$$
By (3), we have
\begin{equation}
\label{e-n11}
p\geq n_0+L+D+2\ell D.
\end{equation}

Below we show that there exist $p-2\ell D\leq  q<p$ with $q\equiv p({\rm mod}\; \ell)$ and ${\bz}\in \{1,\ldots, k\}^*$ so that   (a) happens for the pair $(q, \bz)$ instead of $(p, \bx)$, leading to a contradiction with the minimality of $p$.

We assume, without loss of generality,   that  the $p$-th column of
 $V({\bx})$ is of the form $(2, \underbrace{0, \ldots, 0}_{|\bx|-2}, 2)^T$.  If $|\bx|\leq 2\ell$, then by \eqref{e-n11},
 $$p-\ell-(m_{x_1}+\ldots+m_{x_{t-1}})\geq n_0+L+D;$$
hence by (1), the $(p-\ell)$-column of $V(\bx)$ coincides with its $p$-th column. Thus  (a) happens for the pair $(p-\ell, \bx)$. Next consider the case that
$|\bx|\geq 2\ell+1$.
For  $1\leq i\leq 2\ell+1$, the pair
 $(s_{x_1}\ldots s_{x_i}, m_{x_1}+\ldots+m_{x_{i}}({\rm mod}\; \ell))$ take values in $\{1,-1\}\times \{0, 1,\ldots, \ell-1\}$, a set with cardinality $2\ell$.
 By the pigeon hole principle, there exist $1\leq i<j\leq 2\ell+1$ such that
 $$
 s_{x_1}\ldots s_{x_i}=s_{x_1}\ldots s_{x_j}\mbox{ and }m_{x_1}+\ldots+m_{x_{i}} \equiv m_{x_1}+\ldots+m_{x_j} ({\rm mod}\; \ell).
 $$
Set $\bz=x_1\ldots x_i x_{j+1}\ldots x_t$, and $q=p-(m_{x_{i+1}}+\ldots+m_{x_{j}})$. Then $q\geq p-2\ell D$ and $q\equiv p({\rm mod}\; \ell)$.
Since
\begin{equation*}
\begin{split}
q-(m_{x_1}+\ldots+m_{x_{i-1}})&=p-(m_{x_1}+\ldots+m_{x_{j}})+m_{x_i}\\
&\geq  p-2\ell D\geq n_0+D+\ell,
\end{split}
\end{equation*}
by the $\ell$-periodicity of $(U_\Phi(n))_{n\geq n_0+D+\ell}$,  the first $i$ entries in the $q$-th column of $V(\bz)$ are the same as that in the $p$-th column of $V(\bx)$. In the mean time, since $s_{x_1}\ldots s_{x_i}=s_{x_1}\ldots s_{x_j}$,
applying Lemma \ref{lem-simple}(iii) we see that the last $(t-j)$ entries in the $q$-th column of $V(\bz)$ coincide with that in the $p$-th column of
$V(\bx)$. Hence the $q$-th column of $V(\bz)$ is of the form $(2,\underbrace{0,\ldots, 0}_{|\bz|-2}, 2\}^T$. This proves the above claim, and hence derives a contradiction if  (a) occurs.

Next suppose that (b) occurs for some pair $(p, \bx)$. Again we assume that $p$ is the smallest. Following essentially the same argument as in the
above paragraph, we can find  $p-2\ell D\leq  q<p$ with $q\equiv p({\rm mod}\; \ell)$ and ${\bz}\in \{1,\ldots, k\}^*$, such that
the $q$-th column of $V(\bz)$ is of the form $(\underbrace{0,\ldots, 0}_{|\bz|-1}, u_p\}^T$. Since $q\geq p-2\ell D\geq n_0+L+D$, by (2) we have $a_{q}=a_p$, and thus $(a_q, u_p)$ is not plus admissible. Therefore  (b) occurs for the pair $(q, \bz)$, contradicting the minimality of $p$.
This completes the proof of Theorem \ref{thm-7.1}.
\end{proof}

\subsection{Case $r>1$.} Analogous to Theorem \ref{thm-7.1}, we have


\begin{thm}
\label{thm-7.2}
 Assume that $r>1$.
 For $i=0,1, \ldots, r-1$, set
 $$
 n_i=\inf\left\{n\in \N: \; U_\Phi(rn-i)\neq \{0, \ldots, 0\}^T\right\},
 $$
 with convention $\inf \emptyset=\infty$.
 Then   $a+\F_\Phi\subseteq \C$ if and only if the following properties hold for each $i\in \{0,1,\ldots, r-1\}$ with $n_i\neq \infty$:
 \begin{itemize}
\item[(1)] The sequence $(U_\Phi(rn-i))_{n\geq n_i+L+D/r}$ is periodic with period $\ell/r$.
\item[(2)] The sequence
 $(a_{rn-i})_{n\geq n_i+L}$
  is periodic with period $\ell/r$.
 \item[(3)] Properties (i)-(ii) in Theorem \ref{thm-1.3} hold for those pairs  $(rp-i,\bx)\in \N\times \{1,\ldots, k\}^*$ satisfying that
 $$n_i\leq p< n_i+L+D/r+2\ell D/r
 \quad\mbox{ and } \quad |\bx|\leq  p-n_i+1.
 $$
  \end{itemize}
\end{thm}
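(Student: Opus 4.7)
The plan is to mimic the proof of Theorem~\ref{thm-7.1} while working residue class by residue class modulo~$r$. The key structural observation is that since $r\mid m_j$ for every $j$, all offsets $m_{x_1}+\cdots+m_{x_{j-1}}$ appearing in the definition of $V(\bx)$ are multiples of $r$; hence, for any word $\bx$, the $p$-th column of $V(\bx)$ only draws on entries of $U_\Phi$ at positions $\equiv p\pmod r$. This decouples Theorem~\ref{thm-1.3} into $r$ essentially independent problems indexed by $i\in\{0,1,\ldots,r-1\}$, and within each class the relevant combinatorial parameters get rescaled by $1/r$ (so $\ell$ and $D$ become $\ell/r$ and $D/r$).

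\textbf{Necessity.} Fix $i$ with $n_i<\infty$. Property~(3) is immediate from Theorem~\ref{thm-1.3}. Property~(1) follows from Proposition~\ref{pro-7.1}, which gives eventual $\ell$-periodicity of $(U_\Phi(n))_{n\ge 1}$ and hence $(\ell/r)$-periodicity of $(U_\Phi(rn-i))_{n\ge 1}$. If the period failed to begin by $n=n_i+L+D/r$, some $j$ and some $n\ge n_i+L+D/r$ would satisfy the hypothesis of Lemma~\ref{lem-7.4}; taking $n'=rn_i-i$ in Remark~\ref{rem-7.5} would then force $U_\Phi(rn_i-i)=0$, contradicting the definition of $n_i$. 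Property~(2) follows from Lemma~\ref{lem-6.3}: pick $j$ with $u_{j,\,rn_i-i}\neq 0$ by definition of $n_i$, and apply the lemma with $\tilde n=rn-i$ for every $n\ge n_i+L$ to conclude that $(a_{\tilde n+t\ell})_{t\ge 0}$ is constant.

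\textbf{Sufficiency.} Assume (1)--(3) for every $i$ with $n_i<\infty$ and suppose, for contradiction, that $a+\F_\Phi\not\subseteq\C$. By Theorem~\ref{thm-1.3} pick the smallest column index $p^*$ at which some $\bx\in\{1,\ldots,k\}^*$ produces a violation; as in the proof of Theorem~\ref{thm-7.1}, we may assume the violation takes one of two forms (scenarios~(a) or~(b) of that proof). Write $p^*=rq^*-i$; the outermost nonzero entries of the offending column force $q^*\ge n_i$ and $|\bx|\le q^*-n_i+1$, so that $(p^*,\bx)$ is of the type considered in property~(3) once $q^*<n_i+L+D/r+2\ell D/r$, which contradicts the assumed violation. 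Hence we may assume $q^*\ge n_i+L+D/r+2\ell D/r$ and seek a strictly smaller violating pair, contradicting minimality of $p^*$. If $|\bx|\le 2\ell/r$, shift $p^*\mapsto p^*-\ell$: every relevant row of $U_\Phi$ sits inside the periodic regime of~(1) (the required inequality uses $D\ge\ell$, which is forced by $d_1=0$ and $\ell\mid m_1$), and, in scenario~(b), property~(2) gives $a_{p^*-\ell}=a_{p^*}$, so the violation persists at $(p^*-\ell,\bx)$. If $|\bx|\ge 2\ell/r+1$, apply pigeonhole to the pairs $\bigl(s_{x_1}\cdots s_{x_j},\,(m_{x_1}+\cdots+m_{x_j})\bmod\ell\bigr)$ for $1\le j\le 2\ell/r+1$: since each $m_{x_s}$ is a multiple of $r$ and $\ell$ is a multiple of $r$, these pairs lie in $\{\pm 1\}\times\{0,r,\ldots,\ell-r\}$, a set of $2\ell/r$ elements, so two coincide at indices $i'<j'$. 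Then $\bz=x_1\cdots x_{i'}x_{j'+1}\cdots x_t$ and $p'=p^*-(m_{x_{i'+1}}+\cdots+m_{x_{j'}})$ satisfy $p'\equiv p^*\pmod\ell$; combining~(1) (for the top $i'$ rows of $V(\bz)$) with Lemma~\ref{lem-simple}(iii) (for the bottom $t-j'$ rows, whose accumulated sign matches by the pigeonhole choice) shows that the $p'$-column of $V(\bz)$ reproduces the offending $p^*$-column of $V(\bx)$, and in scenario~(b), property~(2) again matches $a_{p'}=a_{p^*}$.

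\textbf{Main obstacle.} The conceptual skeleton is that of Theorem~\ref{thm-7.1}; the actual work lies in the arithmetic bookkeeping needed to verify that $p^*-\ell-v\ge r(n_i+L+D/r)-i$ and $p'-v\ge r(n_i+L+D/r)-i$ (with $v$ an appropriate prefix sum of $m_{x_s}$'s) so that all shifted positions of $U_\Phi$ remain in the periodic regime guaranteed by~(1), and analogously for~(2). These inequalities rely only on $D\ge\ell$ together with the threshold $q^*\ge n_i+L+D/r+2\ell D/r$; beyond that, tracking the residue-class decomposition consistently throughout the reduction is the only subtlety.
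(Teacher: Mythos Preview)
Your proposal is correct and matches the paper's own approach: the paper states that the proof of Theorem~\ref{thm-7.2} is ``essentially identical to that of Theorem~\ref{thm-7.1} and so is omitted,'' and what you have written is precisely that adaptation, carried out residue class by residue class modulo~$r$ with the parameters $\ell,D$ rescaled to $\ell/r,D/r$. Your identification of the decoupling (all offsets $m_{x_1}+\cdots+m_{x_{j-1}}$ are multiples of $r$), the rescaled pigeonhole on $\{\pm1\}\times\{0,r,\ldots,\ell-r\}$, and the use of $D\ge\ell$ for the bookkeeping inequalities are exactly the modifications the paper has in mind.
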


The proof of Theorem \ref{thm-7.2} is essentially identical to that of Theorem \ref{thm-7.1} and so is omitted.

\section{Generalizations and remarks}
\label{S-7}
In this section we give some generalizations of  our main results.
For $0<\alpha<1/2$, write
$$
\C_\alpha=\left\{ \sum_{n=1}^\infty \epsilon_n\alpha^n:\; \epsilon_n=0 \mbox{ or } 2\right\},
$$
and call $\C_\alpha$  {\it the central $\alpha$-Cantor set} \footnote{Notice that the diameter of $\C_\alpha$ is $2\alpha/(1-\alpha)$.}.  The following addition and subtraction principles hold for $0<\alpha<1/3$.

\begin{lem}
\label{lem-8.1}
Let $0<\alpha<1/3$. Let $a=\sum_{n=1}^\infty a_n \alpha^n$ with $a_n\in \{0,2\}$ and
$b=\sum_{n=1}^\infty u_n \alpha^n$ with $u_n\in \{0,2,-2\}$.  Then
$a+b\in \C_\alpha$ if and only if $a_n+b_n\in \{0,2\}$ for all $n\in \N$. Similarly,  $a-b\in \C_\alpha$ if and only if $a_n-b_n\in \{0,2\}$ for all $n\in \N$.
\end{lem}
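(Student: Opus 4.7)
The plan is to reduce both the addition and subtraction principles to a single digit-wise uniqueness lemma, with the hypothesis $\alpha<1/3$ entering at exactly that step. The ``if'' direction is immediate: if $a_n+b_n\in\{0,2\}$ for every $n$, then $a+b=\sum_{n=1}^\infty(a_n+b_n)\alpha^n$ is already a legitimate $\{0,2\}$-expansion, so $a+b\in\C_\alpha$ by the very definition of $\C_\alpha$. The subtraction ``if'' follows from the addition ``if'' applied to $a$ and $-b=\sum_{n=1}^\infty(-u_n)\alpha^n$, noting that $-u_n\in\{0,2,-2\}$ whenever $u_n\in\{0,2,-2\}$.

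For the ``only if'' direction of the addition statement, first I would write the element $a+b\in\C_\alpha$ in its canonical form $\sum_{n=1}^\infty c_n\alpha^n$ with $c_n\in\{0,2\}$. Subtracting the two series representations of $a+b$ yields
$$\sum_{n=1}^\infty e_n\alpha^n=0,\qquad e_n:=a_n+b_n-c_n\in\{-4,-2,0,2,4\}.$$
I would then invoke the following claim: if $0<\alpha<1/3$ and $\sum_{n=1}^\infty e_n\alpha^n=0$ with each $e_n\in\{-4,-2,0,2,4\}$, then $e_n=0$ for all $n$. Granting this, $a_n+b_n=c_n\in\{0,2\}$ for every $n$, as required. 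The subtraction ``only if'' then follows by applying the already-proved addition version to the pair $(a,-b)$.

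The single substantive step is the claim above. Suppose it fails and let $n_0$ be the smallest index with $e_{n_0}\neq 0$. Without loss of generality $e_{n_0}>0$, hence $e_{n_0}\ge 2$. Then
$$2\alpha^{n_0}\le|e_{n_0}|\alpha^{n_0}=\Bigl|\sum_{n>n_0}e_n\alpha^n\Bigr|\le 4\sum_{n>n_0}\alpha^n=\frac{4\alpha^{n_0+1}}{1-\alpha},$$
which forces $1-\alpha\le 2\alpha$, i.e.\ $\alpha\ge 1/3$, contradicting our hypothesis. I do not anticipate any further obstacle; the threshold $1/3$ is forced here by the wider digit range $\{-4,\ldots,4\}$, whereas the narrower range $\{-2,0,2\}$ would only yield the standard uniqueness threshold $1/2$, which is why the statement of the lemma cannot reach $\alpha<1/2$ without an additional argument.
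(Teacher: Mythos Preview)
Your proof is correct and follows essentially the same approach as the paper: write $a+b=\sum c_n\alpha^n$ with $c_n\in\{0,2\}$, take the smallest index $n_0$ where $a_{n_0}+u_{n_0}\neq c_{n_0}$, and use the tail bound $4\alpha/(1-\alpha)<2$ to derive a contradiction. The paper does not isolate the uniqueness step as a separate claim, but the argument is the same line for line.
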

\begin{proof}
Without loss of generality we only prove that $a+b\in \C_\alpha$ if and only if $a_n+b_n\in \{0,2\}$ for all $n\in \N$. The `if' part is trivial. To show the `only if' part,
suppose that $a+b\in \C_\alpha$. Then there exists $(c_n)\in \{0,2\}^\N$ such that
$$\sum_{n=1}^\infty(a_n+b_n)\alpha^n=\sum_{n=1}^\infty c_n \alpha^n.
$$
It is sufficient to show that $a_n+b_n=c_n$ for all $n\in \N$.  Suppose this is not true. Let $n_0$ be the smallest positive integer so that
$a_{n_0}+b_{n_0}\neq c_{n_0}$.  Then we have
\begin{equation}\label{e-7.1a}
a_{n_0}+b_{n_0}-c_{n_0}=\sum_{m=1}^\infty (c_{n_0+m}-a_{n_0+m}-b_{n_0+m}) \alpha^m.
\end{equation}
Notice that  $|a_{n_0}+b_{n_0}-c_{n_0}|\geq 2$, whilst $-4\leq c_{n_0+m}-a_{n_0+m}-b_{n_0+m}\leq 4$ for each $m\in \N$. Hence by \eqref{e-7.1a},
we have $2\leq 4\sum_{n=1}^\infty\alpha^n=4\alpha/1-\alpha$, which contradicts the assumption that $\alpha<1/3$.
\end{proof}

Due to  the above lemma,  slightly modified versions  of our main results (Theorems \ref{thm-1.4}, \ref{thm-1.7}, \ref{thm-7.1} and \ref{thm-7.2}) remain valid for $\C_\alpha$ ($0<\alpha<1/3$). The proofs are essentially identical to that for the case $\alpha=1/3$.  To be concise, below we only formulate  the modified version of Theorem \ref{thm-1.4}.

\begin{thm}
\label{thm-8.1} Let $\alpha\in (0, 1/3)$ and  $m\in \N$. Let $a=\sum_{n=1}^\infty a_n\alpha^n$ with $a_n\in \{0,2\}$ and
 $\Phi=\{\alpha^m x+d_i\}_{i=1}^k$ with $d_i=\sum_{i=1}^k u_{i,n}\alpha^n$, where
 $u_{1,n}=0$ and $u_{i,n}\in \{0,2,-2\}$ for all $n\in \N$ and $1\leq i\leq k$. Set
 $$
 U_{\Phi, \alpha}=(u_{i,n})_{1\leq i\leq k, n\geq 1},
 $$
 and let $U_{\Phi,\alpha}(n)$ denote the $n$-th column of $U_{\Phi,\alpha}$.
 Then $a+\F_\Phi\subseteq \C_\alpha$ if and only if  there exists a finite non-empty set $\M\subset \N$  such that the following properties (i)-(iv) hold:\begin{itemize}
\item[(i)]
Any two numbers in $\M$
are incongruent modulo $m$;
\item[(ii)]
For each $n\in \M$,  $U_{\Phi,\alpha}(n)$ is either positive or negative;
\item[(iii)]
For each $n\in \N\backslash \M$,  $U_{\Phi,\alpha}(n)=(0,0,\ldots,0)^T$;
\item[(iv)] For any $n\in \M$, and any integer $t\geq 0$,
$$a_{n+mt}=\left\{\begin{array}{ll}
0,  & \mbox{ if  $U_{\Phi,\alpha}(n)$ is  positive},\\
2,  & \mbox{ if  $U_{\Phi,\alpha}(n)$ is  negative}.
\end{array}
\right.
$$
\end{itemize}
\end{thm}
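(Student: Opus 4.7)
The plan is to mirror the proof of Theorem \ref{thm-1.4}, with Lemma \ref{lem-8.1} playing the role that Proposition \ref{pro-4.1} played in the ternary setting. The crucial simplification is that for $\alpha\in(0,1/3)$ the $\alpha$-ary expansion with digits in $\{0,2\}$ of a point in $\C_\alpha$ is unique and the ``admissibility'' characterization of Lemma \ref{lem-8.1} holds without any exceptional set like $\Gamma$ or $\Gamma'$. So the entire endpoint analysis of Section~\ref{S-3} becomes trivial here.

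First, I would establish the analogue of Theorem \ref{thm-1.3} for $\C_\alpha$. Since every map $\phi_i\in\Phi$ has positive ratio $\alpha^m$, the matrix-valued function from Section~\ref{S-4} simplifies: for $\bx=x_1\ldots x_n$, set $V(\bx)$ to be the $n\times\infty$ matrix whose $j$-th row is $0^{(j-1)m}\bu_{x_j}$, where $\bu_i=(u_{i,p})_{p\geq 1}$. I claim that
\[
a+\F_\Phi\subseteq \C_\alpha \iff \text{for every } \bx\in\{1,\dots,k\}^*\text{ and every }p\in\N,\ a_p+\textstyle\sum_{j=1}^{|\bx|} v_{j,p}\in\{0,2\},
\]
where $v_{j,p}$ denotes the $(j,p)$-entry of $V(\bx)$. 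The ``only if'' direction follows by applying Lemma \ref{lem-8.1} to $a$ and $b=\phi_{x_1\ldots x_n}(0)=\sum_{j=1}^{n}\alpha^{(j-1)m}d_{x_j}$, whose $\alpha$-ary digit sequence is exactly the column-sum of $V(\bx)$. The ``if'' direction is by induction on $n$: admissibility at each column lets us invoke Lemma \ref{lem-8.1} to conclude $a+\phi_{x_1\ldots x_j}(0)\in\C_\alpha$ for every $j\leq n$, and since $0\in\F_\Phi$, the set $\{\phi_{x_1\ldots x_n}(0)\}$ is dense in $\F_\Phi$ (analogue of Lemma \ref{lem-1.2}).

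Given this criterion, the proof of Theorem \ref{thm-8.1} is essentially a transcription of the proof of Theorem \ref{thm-1.4}. For the ``if'' direction: assume conditions (i)--(iv) on some finite $\M\subset\N$. For any $\bx$ and any column index $p$, the entries of the $p$-th column of $V(\bx)$ are $u_{x_j,\, p-(j-1)m}$. Condition (iii) forces this entry to be zero unless $p-(j-1)m\in\M$, and condition (i) allows at most one such $j$. So the column has at most one nonzero entry; then condition (iv) guarantees $a_p$ is compatible with it, yielding admissibility. The criterion above delivers $a+\F_\Phi\subseteq\C_\alpha$.

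For the ``only if'' direction, set $\M=\{n\in\N: U_{\Phi,\alpha}(n)\neq 0\}$. Conditions (i) and (iii) need to be derived; together with (ii) and (iv) this is done by the same two-matrix trick as in the proof of Theorem \ref{thm-1.4}. Suppose $u_{i,p}=2$. Applying the criterion to $V(1^{n-1}i)$ yields a column $(0,\ldots,0,2)^T$ and forces $a_{p+m(n-1)}=0$ for every $n\geq 1$, giving the positive case of (iv) and ruling out $u_{j,p}=\overline{2}$ or $u_{j,p+mn}=\overline{2}$ for any $j$. Next, applying the criterion to $V(j\,1^{n-1}i)$ produces a column $(u_{j,p+mn},0,\ldots,0,2)^T$; admissibility then forces $u_{j,p+mn}=0$ for $n\geq 1$. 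The symmetric analysis with $u_{i,p}=\overline{2}$ gives the negative case. This establishes (i)--(iv) with the chosen $\M$.

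The only genuine potential obstacle is verifying that the criterion really does reduce to the clean iff form above, but since Lemma \ref{lem-8.1} is already an exact iff (with no excluded endpoint sequences), the reduction is immediate and no analogue of the $(\Gamma,\Gamma')$ case analysis from Section~\ref{S-3} is needed. All remaining work is a routine repackaging of arguments that already appear in Section~\ref{S-5}.
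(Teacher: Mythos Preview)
Your proposal is correct and matches the paper's own approach: the paper simply states that the proof of Theorem~\ref{thm-8.1} is ``essentially identical to that for the case $\alpha=1/3$'' once Lemma~\ref{lem-8.1} replaces the addition/subtraction principles, and that is precisely what you carry out.

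One small point of precision: in the ``only if'' direction of your criterion, you write that it ``follows by applying Lemma~\ref{lem-8.1} to $a$ and $b=\phi_{x_1\ldots x_n}(0)$, whose $\alpha$-ary digit sequence is exactly the column-sum of $V(\bx)$.'' This is not quite a single application: a priori the column-sums need not lie in $\{0,2,-2\}$, so Lemma~\ref{lem-8.1} does not apply directly to the full word $\bx$. The clean argument is by induction on $|\bx|$ (parallel to Lemma~\ref{lem-5.2}): once you know $c_p:=a_p+\sum_{j<n}v_{j,p}\in\{0,2\}$ for all $p$, then $\sum_p c_p\alpha^p + \alpha^{(n-1)m}d_{x_n}\in\C_\alpha$ and Lemma~\ref{lem-8.1} gives $c_p+v_{n,p}\in\{0,2\}$. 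You already invoke induction for the ``if'' direction, so this is a trivial fix. With that adjustment your argument is complete.
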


Now it arises a natural question whether Theorem \ref{thm-1.1} can be extended to other values $\alpha\in (0,1/3)$.
After the completion of the first version of this paper, the following extension of Theorem \ref{thm-1.1} was given  in \cite{FHR14} in a slightly variant version.
\begin{thm}\cite[Theorem 1.4]{FHR14}
\label{thm-7.3}
Let $0<\alpha<1/2$. Assume that $\F\subseteq \C_\alpha$ is a non-trivial self-similar set, generated by a linear IFS
$\Phi=\{\phi_i\}_{i=1}^k$ on $\R$.
\begin{itemize}
\item[(1)]If $\alpha\leq 1/4$, then for each $1\leq i\leq k$, $\phi_i$ has contraction ratio $\pm \alpha^{-m_i}$, where $m_i\in \N$.
\item[(2)]If $\alpha<\sqrt{2}-1$ or $1/\alpha$ is a Pisot number, then $\phi_i$ has contraction ratio $\pm \alpha^{t_i}$, where $t_i\in \Q_+$.
\end{itemize}
\end{thm}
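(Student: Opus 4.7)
The strategy mirrors the two-step proof of Theorem~\ref{thm-1.1}: first a geometric gap argument forcing $\log|\rho_i|/\log\alpha\in\Q$, and second a Pisot/uniqueness step upgrading rationality to integrality when part~(1) applies.

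Step one generalizes Lemma~\ref{lem-2.2} to $\C_\alpha$. The $m$-th basic intervals of $\C_\alpha$ have length $\alpha^m$ and sibling intervals inside a common $(m{-}1)$-th interval are separated by a gap of length $(1-2\alpha)\alpha^{m-1}$. Fixing $\phi\in\Phi$ of ratio $\rho$ and a non-fixed point $b\in\F$, one has $c+\rho^{2n}d\in\C_\alpha$ for all $n\in\N$, with $c\in\C_\alpha$ and $d>0$. If $\log|\rho|/\log\alpha\notin\Q$, density of $\{2n\log|\rho|+m\log\alpha^{-1}:n,m\in\N\}$ in $\R$ lets us choose $n,m$ with $\alpha^m<\rho^{2n}d<(1+\epsilon)\alpha^m$ and then $p$ with $\rho^{2(n+p)}d$ forced into the gap adjacent to the $m$-th basic interval containing $c$, contradicting $c+\rho^{2(n+p)}d\in\C_\alpha$. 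For $\alpha<\sqrt{2}-1$ the inequality $(1-2\alpha)\alpha^{m-1}>\alpha^m$ provides the geometric slack needed to run the argument verbatim; for $1/\alpha$ Pisot, the Pisot property of $\{\rho^n\}$ supplies an alternative route to the same contradiction even when the gap is not large. Either way, $|\rho_i|=\alpha^{t_i}$ with $t_i\in\Q_+$, yielding part~(2).

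For part~(1), $\alpha\le 1/4$ already guarantees rationality from step one (with ample slack since $(1-2\alpha)\alpha^{m-1}\ge 2\alpha^m$), so it remains to promote $t_i\in\Q_+$ to $t_i\in\N$. When $1/\alpha$ is Pisot (e.g.\ $\alpha=1/4$), I would copy the argument at the end of Section~\ref{S-2}: since $\C_\alpha$ is a set of uniqueness by Theorem~\ref{thm-2.3}, so are the attractors of the two-map IFSs $\{\phi_1\circ\phi_i,\phi_i\circ\phi_1\}$ and $\{\phi_1\circ\phi_i^2,\phi_i^2\circ\phi_1\}$, forcing $\alpha^{-(t_1+t_i)}$ and $\alpha^{-(t_1+2t_i)}$ both to be Pisot, which together with rationality yields $t_1\in\N$. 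When $1/\alpha$ is not Pisot, Salem--Zygmund is unavailable and I would substitute the purely digital rigidity of Lemma~\ref{lem-8.1} (valid since $\alpha<1/3$): if $t_i=p/q$ with $\gcd(p,q)=1$ and $q>1$, one expects iterates $\phi_{\bx}(0)\in\F\subseteq\C_\alpha$ whose $\alpha$-expansion must contain a digit outside $\{0,2\}$, contradicting Lemma~\ref{lem-8.1}.

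The main obstacle is exactly this last substep: ruling out fractional rational exponents in part~(1) for transcendental $\alpha\le 1/4$. The difficulty is converting the analytic datum $|\rho_i|=\alpha^{p/q}$ into a digital obstruction, since $\alpha$ is no longer an integer base. I expect the argument to proceed by pigeonhole on the $q$ residues of $m_i(p/q)\bmod 1$, producing a composition whose contraction ratio in modulus has the form $\alpha^{r/q}$ with $0<r<q$; evaluating such a map at a fixed point of $\Phi$ and expanding in base $\alpha$ should, via Lemma~\ref{lem-8.1} and a careful choice of compositions, force an $\alpha$-digit outside $\{0,2\}$, contradicting $\F\subseteq\C_\alpha$.
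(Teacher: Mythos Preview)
The paper does not prove Theorem~\ref{thm-7.3}; it is quoted from \cite{FHR14} without proof, so there is no ``paper's own proof'' to compare against. I will therefore assess your proposal on its own merits.

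There are two concrete gaps. First, your geometric step for part~(2) is wrong in the range $\alpha\in[1/3,\sqrt{2}-1)$. You assert that $(1-2\alpha)\alpha^{m-1}>\alpha^m$ ``provides the geometric slack'' for $\alpha<\sqrt{2}-1$, but that inequality is equivalent to $\alpha<1/3$, not $\alpha<\sqrt{2}-1$. The threshold $\sqrt{2}-1$ is characterized instead by $1-2\alpha=\alpha^2$, so for $\alpha$ between $1/3$ and $\sqrt{2}-1$ the level-$m$ sibling gap satisfies only $\alpha^{m+1}<(1-2\alpha)\alpha^{m-1}<\alpha^m$, and the one-level argument of Lemma~\ref{lem-2.2} fails: the two $m$-th intervals containing $c$ and $c+\rho^{2n}d$ can be genuine siblings without contradiction. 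A refinement is needed here. Your treatment of the Pisot alternative in part~(2) (``the Pisot property of $\{\rho^n\}$ supplies an alternative route'') is also not a proof; you have not said what statement about Pisot numbers you are invoking or how it interacts with the geometry of $\C_\alpha$.

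Second, for part~(1) with $\alpha\le 1/4$ not the reciprocal of a Pisot number, you correctly identify that Salem--Zygmund (Theorem~\ref{thm-2.3}) is unavailable, but the substitute you sketch---forcing an $\alpha$-digit outside $\{0,2\}$ via Lemma~\ref{lem-8.1} and some pigeonhole on residues of $m_i(p/q)$---is not an argument. You have not produced any specific element of $\F$ whose $\alpha$-expansion you can control, and Lemma~\ref{lem-8.1} by itself only tells you that sums of $\alpha$-expansions with digits in $\{0,\pm2\}$ behave rigidly; it says nothing about numbers of the form $\alpha^{p/q}$. This is the substantive difficulty in the theorem, and your proposal does not resolve it.
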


Recall that $\lambda>1$ is called a {\it Pisot number} if it is an algebraic integer whose algebraic conjugates are all less than $1$ in modulus.

Furthermore it was observed in \cite[Lemma 4.1]{FHR14} that
there exist  countably many points  $\alpha$ in $((3-\sqrt{5})/2, 1/2)$,  namely the positive roots of $x-(x+x^2+\ldots+x^k)^2$ ($k=2,3,\ldots$), such that  $\C_\alpha$  contains a   self-similar subset $E$  which admits a contraction ratio $\alpha^{q/2}$ for an odd integer $q$ in one of the maps in a generating IFS of $E$. Nevertheless, $(3-\sqrt{5})/2\approx 0.382>1/3$.

For $p\geq 2$, let $\Lambda_p$ denote the set of $\alpha\in (0,1/2)$ so that $\C_\alpha$ contains a  self-similar subset $E$  which admits a contraction ratio $\alpha^{q/p}$, with  $q$ coprime to $p$,  in one of the maps in a generating IFS of $E$.
By Theorems \ref{thm-8.1} and \ref{thm-7.3}, we can formulate a criterion for $\alpha\in (1/4, 1/3)\cap \Lambda_p$; and in particular we can characterize  the elements in the set  $(1/4, 1/3)\cap \Lambda_2$.

\begin{thm}
\label{thm-8.2}
 Let $p\in \N$ with $p\geq 2$. Then $\alpha\in (1/4,1/3)\cap \Lambda_p$ if and only if there exists $b>0$ such that $b\alpha^{n/p}$ ($n=0,\ldots, p-1$) are of the forms
\begin{equation}
\label{e-8.1}
b \alpha^{n/p}=\sum_{k=1}^{t_n} \epsilon_{k,n} \alpha^k
\end{equation}
simultaneously, where $t_n\in \N$ and $\epsilon_{k, n}\in \{0, 2,-2\}$ for $1\leq n\leq p-1$ and $1\leq k\leq t_n$. In particular, \begin{equation*}
\begin{split}
(1/4,1/3)\cap \Lambda_2:=& \left\{ 0<\alpha<1/3:  \;  \exists \mbox{  polynomials $A(x)$ and $B(x)$ }\right.\\
\mbox{} & \left. \mbox{ of coefficients $0,\pm 2$ such that $\alpha^{1/2}=A(\alpha)/B(\alpha)$}\right\}.
\end{split}
\end{equation*}
\end{thm}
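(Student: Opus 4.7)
The plan is to prove both directions of the equivalence and then to derive the $p=2$ specialization as an immediate algebraic corollary.

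For the forward direction, assume $\alpha \in (1/4, 1/3) \cap \Lambda_p$. By definition, there is a non-trivial self-similar $E \subseteq \C_\alpha$ with a generating IFS $\Phi$ containing a map $\phi(x) = s \alpha^{q/p} x + d$ with $s = \pm 1$ and $\gcd(q, p) = 1$. Normalizing $\Phi$ as in $\mathcal{F}$, we may assume $0 \in E$, so that $d = \phi(0) \in E \subseteq \C_\alpha$ and admits an intrinsic $\{0, 2\}$-expansion in $\alpha$. Let $a$ be the fixed point of $\phi$, so $a \in E \subseteq \C_\alpha$; pick $y \in E \setminus \{a\}$. For each $n \geq 0$, $\phi^n(y) = a + s^n \alpha^{nq/p}(y - a) \in E$. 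Subtracting the intrinsic $\{0, 2\}$-expansions of $a$ and $\phi^n(y)$ yields an a priori infinite $\{0, 2, -2\}$-expansion of $s^n \alpha^{nq/p}(y - a)$. Since $\gcd(q, p) = 1$, the residues $nq \bmod p$ for $n = 0, 1, \ldots, p-1$ exhaust $\{0, 1, \ldots, p-1\}$, so after absorbing the sign $s^n$ and dividing out the integer powers $\alpha^{\lfloor nq/p \rfloor}$, we obtain $\{0, 2, -2\}$-expansions of $\alpha^{m/p}(y - a)$ for every $m \in \{0, 1, \ldots, p-1\}$.

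The main obstacle is upgrading these infinite expansions to the finite ones required by the theorem. I plan to invoke the $\C_\alpha$-analog of Theorem~\ref{thm-7.2} (whose proof transfers verbatim, since Lemma~\ref{lem-8.1} furnishes the addition/subtraction principles on $\C_\alpha$ for $\alpha < 1/3$) to conclude that the $\alpha$-expansions of $a$ and each $\phi^n(y)$ are eventually periodic with a common period arising from the integer-power iterate $\phi^p$. I then plan to choose $y \in E$ whose expansion matches the eventual periodic tail of $a$: such a $y$ can be obtained by iterating the maps of $\Phi$ to produce points in $E$ sharing the tail of $a$ dictated by the intrinsic translation matrix. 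This makes $y - a$ a finite $\{0, 2, -2\}$-sum in $\alpha$, and setting $b := \alpha^N \lvert y - a \rvert$ for sufficiently large $N$ (to absorb the $\alpha^{-\lfloor nq/p \rfloor}$ factors and ensure the expansions start at $k \geq 1$) yields the required finite expansions of $b \alpha^{n/p}$ for $n = 0, 1, \ldots, p-1$.

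For the reverse direction, given $b > 0$ with finite expansions $b \alpha^{n/p} = A_n(\alpha)$ where $A_n(x) = \sum_{k=1}^{t_n} \epsilon_{k,n} x^k$ has coefficients in $\{0, 2, -2\}$, I construct a two-map IFS $\Phi = \{\phi_0, \phi_1\}$ with $\phi_0(x) = \alpha^{1/p} x$ and $\phi_1$ an auxiliary map of integer-power ratio chosen to guarantee non-triviality. The $\phi_0$-orbit of $b$ consists of the finite $\{0, \pm 2\}$-sums $A_n(\alpha)$ (extended for $n \geq p$ by $A_{n+p}(x) = x A_n(x)$). By Lemma~\ref{lem-8.1}, one can pick an additive shift $c \in \C_\alpha$ (set $c_k = 2$ at positions where some $\epsilon_{k,n} = -2$ and $c_k = 0$ otherwise) so that every shifted orbit point $A_n(\alpha) + c$ lies in $\C_\alpha$; potential sign conflicts at a single position are handled by enlarging the IFS to split such positions into distinct branches. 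The resulting attractor is non-trivial, lies in $\C_\alpha$, and admits $\phi_0$ as a generator with ratio $\alpha^{1/p}$ (so $q = 1$, coprime to $p$), establishing $\alpha \in \Lambda_p$. The $p=2$ specialization then follows at once: setting $A(x) := \sum_k \epsilon_{k, 1} x^k$ and $B(x) := \sum_k \epsilon_{k, 0} x^k$ gives polynomials with coefficients in $\{0, \pm 2\}$ satisfying $\alpha^{1/2} = A(\alpha)/B(\alpha)$, and conversely, given such $A, B$, taking $b := \alpha B(\alpha)$ (multiplying by $\alpha$ to ensure indices start at $k \geq 1$) supplies the required finite $\{0, \pm 2\}$-expansions of $b$ and $b\alpha^{1/2} = \alpha A(\alpha)$.
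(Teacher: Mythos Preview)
Your proposal has genuine gaps in both directions, and it misses the clean reduction that makes the paper's argument work.

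\textbf{Forward direction.} Your plan is to take a map $\phi$ with ratio $\pm\alpha^{q/p}$, iterate it on some $y$, and subtract the $\{0,2\}$-expansions of $a$ and $\phi^n(y)$ to get $\{0,2,-2\}$-expansions of $\alpha^{nq/p}(y-a)$. These expansions are \emph{a priori} infinite, and your proposed upgrade to finite ones does not go through. You say you will invoke the $\C_\alpha$-analogue of Theorem~\ref{thm-7.2}, but that result requires $\Phi\in\cF$, i.e.\ integer-power ratios; it does not apply to the IFS containing $\phi$. Even after passing to $\phi^p$, Theorem~\ref{thm-7.2} controls the periodicity of the columns of $U_\Phi$ and of $(a_n)$, not the expansion of an arbitrary point $\phi^n(y)$. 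The claim that you can choose $y$ whose expansion shares the tail of $a$ \emph{and simultaneously} arrange that $\phi^n(y)$ shares the tail of $a$ for every $n=0,\dots,p-1$ is unsupported; there is no mechanism in your outline that controls all these tails at once.

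The paper avoids this entirely. It first reduces (this is the step you are missing) to a two-map \emph{homogeneous} IFS $\Psi=\{\alpha^{m'+1/p}x,\ \alpha^{m'+1/p}x+c\}$ with $a+\F_\Psi\subseteq\C_\alpha$, then takes $\Phi:=\Psi^p$, whose ratio $\alpha^{pm'+1}$ is an integer power. Now Theorem~\ref{thm-8.1} applies directly to $(a,\Phi)$: conditions (i)--(iii) there force $U_{\Phi,\alpha}$ to have only finitely many nonzero columns, so every translation of $\Phi$---in particular each $c\,\alpha^{m'n+n/p}$---is a \emph{finite} $\{0,2,-2\}$-polynomial in $\alpha$. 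Setting $b=c\alpha^{m'p}$ finishes. No tail-matching is needed: finiteness comes for free from the structure theorem for the homogeneous case.

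\textbf{Reverse direction.} Your construction is too vague to be a proof. Taking $\phi_0(x)=\alpha^{1/p}x$ and an unspecified $\phi_1$ of integer-power ratio does not by itself produce a non-trivial attractor inside $\C_\alpha$; and the sentence ``potential sign conflicts at a single position are handled by enlarging the IFS to split such positions into distinct branches'' is not an argument. The actual issue is that a single position $k$ may carry $\epsilon_{k,n}=2$ for one $n$ and $\epsilon_{k,n'}=-2$ for another, in which case no choice of $c_k\in\{0,2\}$ works; you cannot fix this by adding branches without saying precisely what the new maps are and why the enlarged attractor still sits in $\C_\alpha$. The paper instead picks $m'\ge\max_n t_n$, builds the $2^p$-map IFS $\Phi=\{\alpha^{pm'+1}x+b\sum_{n}\epsilon_n\alpha^{m'n+n/p}:\epsilon_n\in\{0,1\}\}$, observes that the choice of $m'$ places the nonzero digits of the $p$ summands in disjoint blocks of positions (so no sign conflicts arise), verifies conditions (i)--(iii) of Theorem~\ref{thm-8.1}, chooses $a$ via (iv), and then notes $\F_\Phi=\F_\Psi$ with $\Psi=\{\alpha^{m'+1/p}x,\ \alpha^{m'+1/p}x+b\}$.
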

\begin{proof}
We first prove the necessity part of the theorem. Suppose that $\alpha \in (0,1/3)\cap  \Lambda_p$. It is not hard to show that there exist $a, c>0$, $m'\in \N$ such that
$\C_\alpha\supset a+\F_\Psi$, where $\Psi=\{\alpha^{m'+1/p}x, \alpha^{m'+1/p}x+c\}$. Notice that
$$
\Phi:=\Psi^p=\left\{\alpha^{pm'+1}x+c\sum_{n=0}^{p-1} \epsilon_n \alpha^{m'n+n/p}:\; \epsilon_n\in \{0,  1\} \mbox{ for $0\leq n\leq p-1$}\right\},
$$
and $\F_\Phi=\F_{\Psi}$. We have $$\C_\alpha\supset a+\F_{\Phi}.$$
Set $m=pm'+1$. By Theorem \eqref{thm-8.1},  for each $(\epsilon_0,\ldots, \epsilon_{p-1})\in \{0,1\}^p$, $c \sum_{n=0}^{p-1} \epsilon_n \alpha^{m'n+n/p}$ can be expanded into a polynomials of $\alpha$ with coefficients in $\{0, 2,-2\}$. In particular, for each $0\leq n\leq p-1$,
$c\alpha^{m'n+n/p}$ can be expanded into a polynomials of $\alpha$ with coefficients in $\{0, 2,-2\}$, which   implies that \eqref{e-8.1} holds for $b=c \alpha^{m'p}$.

Next we prove the sufficiency part. Suppose that \eqref{e-8.1} holds. Take an  integer $m'$ so that
\begin{equation}
\label{e-8.2}
m'\geq \max \{t_n:\; n=0,\ldots, p-1\}.
\end{equation}
 Let
 $$\Phi=\left\{\alpha^{pm'+1}x+b\sum_{n=0}^{p-1} \epsilon_n \alpha^{m'n+n/p}:\; \epsilon_n\in \{0,  1\} \mbox{ for $0\leq n\leq p-1$}\right\}.
$$
Denote
$$
G:=\left\{b\sum_{n=0}^{p-1} \epsilon_n \alpha^{m'n+n/p}:\; \epsilon_n\in \{0,  1\} \mbox{ for $0\leq n\leq p-1$}\right\}.
$$
The assumption \eqref{e-8.1} and the condition \eqref{e-8.2} ensure that each $g\in G$ has an expansion $\sum_{k=0}^{m'p}\epsilon_{k,g} \alpha^k$, with $\epsilon_{k,g}\in \{0,2,-2\}$; and moreover, for each $0\leq k\leq m'p$, either $\epsilon_{k,g}\in \{0,2\}$  for all   $g\in G$, or $\epsilon_{k,g}\in \{0,-2\}$  for all   $g\in G$.  Rewrite $\Phi$ as $\{\alpha^{m}x+b_i\}_{1\leq i\leq 2^p}$, with $m=pm'+1$ and $b_1=0$.  Then the matrix $U_{\Phi,\alpha}$ satisfies the conditions (i)-(iii) of Theorem \eqref{e-8.1}, with $\M\subseteq \{1, \ldots, m\}$.  Choose $(a_n)\in \{0,2\}^\N$ such that (iv) of Theorem \ref{thm-8.1} holds. Then by Theorem \eqref{e-8.1}, $a+\F_\Phi\subseteq \C_\alpha$. This completes  the proof of the sufficiency part, since  $\F_\Phi=\F_\Psi$ with $\Psi=\{\alpha^{m'+1/p}x, \alpha^{m'+1/p}x+b\}$.
\end{proof}

\begin{rem}
\begin{itemize}
\item[(i)] $1/4$ is an accumulation point of the set $(1/4,1/3)\cap \Lambda_2$, since this set contains the positive roots of $x^{1/2}=\frac{1-x-x^2-\cdots-x^n}{1+x+x^2+\cdots+ x^n}$ ($n\geq 2$).
\item[(ii)] By Theorem \ref{thm-8.2}, for each $p\geq 2$,  $(1/4,1/3)\cap \Lambda_p$ does not contain any transcendental numbers. We do not know whether $(1/4,1/3)\cap \Lambda_p
\neq \emptyset$ for some $p\geq 3$.
\end{itemize}
\end{rem}

\begin{rem}
  As our main results (Theorems \ref{thm-1.4}, \ref{thm-1.7}, \ref{thm-7.1} and \ref{thm-7.2}) can be extended to $\C_\alpha$ for $0<\alpha<1/3$, by Theorem \ref{thm-8.2} and the definition of $\Lambda_p$, we can characterize all the self-similar subsets of $\C_\alpha$, in a way similar to that for $\C$,  for all those  $\alpha$ in the following set
  $$(0,  1/4] \cup \Big[(1/4, 1/3)\backslash (\bigcup_{p\geq 2} \Lambda_p)\Big].$$

\end{rem}
\appendix

\section{ }
\label{sec-appendix}
In this part we give some known or simple results that are needed in the previous sections. We begin with the following.

\begin{lem}\cite[Theorem 1]{Bra42}
\label{lem-7.1}
Let $p_1,\ldots, p_n\in \N$ and   $h=\gcd(p_1,\ldots,p_n)$. 
Then for any
integer $\displaystyle y\geq \left(\min_{1\leq i\leq n}\frac{p_i}{h}-1\right)\left(\max_{1\leq i\leq n}\frac{p_i}{h}-1\right)$,
there exist $y_1,\ldots, y_n\in \N\cup\{0\}$ such that $$yh=\sum_{i=1}^n y_ip_i.$$
\end{lem}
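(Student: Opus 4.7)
The plan is to reduce to the coprime case and then bound the Frobenius number of the numerical semigroup it generates. First, divide through by $h$: setting $q_i=p_i/h$, the hypothesis becomes $\gcd(q_1,\ldots,q_n)=1$, and with $a=\min_i q_i$, $b=\max_i q_i$ the claim reduces to showing that every integer $y\ge (a-1)(b-1)$ lies in the numerical semigroup $S=\{\sum_{i=1}^n y_i q_i:\, y_i\in\N\cup\{0\}\}$.

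The next step is standard residue-class bookkeeping. For each $r\in\{0,1,\ldots,a-1\}$, let $m_r$ denote the smallest element of $S$ congruent to $r$ modulo $a$; such an element exists because the $\Z$-span of $q_1,\ldots,q_n$ is all of $\Z$ (by the gcd hypothesis), so their image modulo $a$ is all of $\Z/a\Z$. Since $a=q_i$ for some $i$, one has $a\in S$, and hence $s\in S$ if and only if $s\ge m_{s\bmod a}$. Therefore it suffices to show $m_r\le (a-1)b$ for every $r$: indeed, for $y\ge (a-1)(b-1)$ with $r=y\bmod a$, the difference $m_r-y\le (a-1)b-(a-1)(b-1)=a-1$ is a multiple of $a$ strictly less than $a$, forcing $m_r\le y$ and hence $y\in S$.

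The heart of the proof is therefore the bound $m_r\le (a-1)b$. My approach is to examine the nested family
$T_k:=\{\sum_i y_i q_i\bmod a:\, y_i\ge 0,\ \sum_i y_i\le k\}\subseteq\Z/a\Z.$
Then $\{0\}=T_0\subseteq T_1\subseteq T_2\subseteq\cdots$, and whenever $T_k=T_{k+1}$ the set $T_k$ is closed under adding each $q_i$, hence stable under translation by the subgroup $\langle q_1,\ldots,q_n\rangle=\Z/a\Z$, forcing $T_k=\Z/a\Z$. Consequently the chain must stabilize as soon as $|T_k|=a$; in particular $T_{a-1}=\Z/a\Z$. Hence every residue $r$ admits a representation $\sum_i y_i q_i\bmod a$ with $\sum_i y_i\le a-1$, and the corresponding integer value is at most $(a-1)b$, giving $m_r\le (a-1)b$ as required.

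The main obstacle is the sub-case $\gcd(a,b)>1$, for example $(p_1,p_2,p_3)=(6,10,15)$: the multiples $\{jb\bmod a:0\le j\le a-1\}$ then cover only a proper subgroup of $\Z/a\Z$, so one cannot reduce to the two-variable Sylvester--Frobenius theorem for $(a,b)$ alone. The growing-set argument above sidesteps this uniformly by exploiting all of $q_1,\ldots,q_n$ jointly through a pigeonhole on $|T_k|$, and this is the essential (and only) place where the full list of generators, rather than just $a$ and $b$, is needed.
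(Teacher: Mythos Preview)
Your argument is correct. The paper itself does not supply a proof of this lemma: it is quoted in the appendix as a known result, with a reference to Brauer's 1942 paper, and is used only through the consequence \eqref{e-7.2'}. Your reduction to coprime generators, followed by the Ap\'ery-type bound $m_r\le (a-1)b$ obtained from the strictly increasing chain $T_0\subsetneq T_1\subsetneq\cdots\subseteq\Z/a\Z$, is a clean self-contained proof and is essentially the classical residue-class argument that underlies Brauer's bound on the Frobenius number. One small point worth making explicit (you use it implicitly): once $T_k=T_{k+1}$, translation by each $q_i$ is injective on the finite set $T_k$, so $T_k+q_i=T_k$ and hence $T_k$ is a union of cosets of $\langle q_1,\ldots,q_n\rangle=\Z/a\Z$; this is what forces $T_k=\Z/a\Z$.
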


The next result is elementary.
\begin{lem}
\label{lem-a}
Suppose  $\E\subseteq [0,1]$ with $0\in \E$ is a non-trivial self-similar set generated by an IFS
$\Phi=\{\phi_i\}_{i=1}^k$ of the form
$$\phi_i(x)=s_i3^{-m_i}x+d_i,\quad i=1,\ldots, k,$$
where $s_i=\pm 1,m_i\in \N$ and $d_i\in \R$.  Then $0\leq d_i\leq 1$ for all $1\leq i\leq k$, and  $d_i>0$ whenever $s_i=-1$. Moreover, there exists at least one $1\leq i\leq k$ so that $d_i>0$.
\end{lem}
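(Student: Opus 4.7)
The plan is to exploit the invariance $\phi_i(\E)\subseteq \E\subseteq[0,1]$ together with the hypothesis $0\in\E$ to control $d_i$ directly, and then use the non-triviality of $\E$ to rule out the degenerate configurations.

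First, since $0\in\E$, each $\phi_i(0)=d_i$ lies in $\phi_i(\E)\subseteq\E\subseteq[0,1]$, so immediately $0\leq d_i\leq 1$ for every $i$. This takes care of the first claim with no case analysis.

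Next, to show $d_i>0$ whenever $s_i=-1$, I argue by contradiction: suppose $s_i=-1$ and $d_i=0$. Then $\phi_i(x)=-3^{-m_i}x$ for $x\in\R$. Because $\E$ is non-trivial and contains $0$, pick any $x_0\in\E$ with $x_0>0$. Then $\phi_i(x_0)=-3^{-m_i}x_0<0$, contradicting $\phi_i(x_0)\in\E\subseteq[0,1]$. Hence $d_i>0$ when $s_i=-1$.

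Finally, to prove that at least one $d_i$ is positive, suppose for contradiction that $d_i=0$ for all $i$. Combined with the previous step this forces $s_i=1$ for every $i$ (otherwise some $s_i=-1$ with $d_i=0$ would already be excluded). Hence each $\phi_i$ has the form $\phi_i(x)=3^{-m_i}x$, a contraction whose unique fixed point is $0$. By Hutchinson's theorem the unique non-empty compact $\E$ satisfying $\E=\bigcup_i\phi_i(\E)$ is then $\E=\{0\}$, contradicting the assumption that $\E$ is non-trivial.

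No step here is genuinely difficult; the only place one has to be slightly careful is the second step, where the existence of a positive element of $\E$ must be justified from non-triviality plus $0\in\E$, and in the third step, where one invokes Hutchinson's uniqueness to conclude the attractor is the common fixed point. The rest is immediate from $\phi_i(\E)\subseteq[0,1]$.
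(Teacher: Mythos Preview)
Your proof is correct and follows essentially the same approach as the paper's own proof: evaluate $\phi_i$ at $0$ to bound $d_i$, use a positive element of $\E$ (equivalently, the fact that $\phi_i(\E)$ has more than one point) to rule out $s_i=-1$ with $d_i=0$, and appeal to Hutchinson's uniqueness to rule out all $d_i=0$.
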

\begin{proof}
Since $0\in \E$, we have $d_i=\phi_i(0)\in \E\subseteq [0,1]$ for each $1\leq i\leq k$.
Now suppose $s_i=-1$ for some $i$. Then $\E\supset \phi_i(\E)=d_i-3^{-m_i}\E$. Since $\E\subseteq [0,1]$,
we must have $d_i>0$, for otherwise, $\phi_i(\E)\subseteq (-\infty,0]$,
which is impossible because $\phi_i(\E)$ contains at least two points and $\phi_i(\E)\subset \E\subseteq [0,1]$.

Next we show that there exists at least one $i\in \{1,\ldots, k\}$ so that $d_i>0$. If this is not true, then by the above argument, $s_i=1$ and $d_i= 0$ for all $i$, which implies that $\E=\{0\}$, leading to a contradiction.
\end{proof}

Now we are ready to prove the following result (Lemma \ref{lem-b} in Section \ref{S-1}).
\begin{lem}
Let $\F\subseteq \C$ be a non-trivial self-similar set. Write $\F'=\{1-x:\; x\in \F\}$.
 Then either $\F$ or $\F'$ can be written as $a+\E$, where $a\in \C$ and $\E=\F_\Phi$ for some  $\Phi\in \f$,
 where $\F_\Phi$ denotes the attractor of $\Phi$.
\end{lem}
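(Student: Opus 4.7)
I set $a:=\min\F$ and $b:=\max\F$ (both lie in $\F\subseteq\C$) and consider the two candidate normalisations $\E:=\F-a$ and $\E'':=b-\F$. If I can show that \emph{one} of these equals $\F_\Phi$ for some $\Phi\in\f$, then either $\F=a+\E$ (if $\E$ works) or $\F'=(1-b)+\E''$ (if $\E''$ works, noting $1-b\in\C$ by the symmetry $\C=1-\C$), which is exactly what the lemma claims. Given any generating IFS $\{\phi_i(x)=s_i 3^{-m_i}x+c_i\}_{i=1}^k$ of $\F$, whose maps must have this shape by Theorem \ref{thm-1.1}, the naturally induced IFSs are $\{\phi_i^\E(x):=\phi_i(x+a)-a=s_i 3^{-m_i}x+d_i\}$ for $\E$ with $d_i=\phi_i(a)-a$, and $\{\psi_i(x):=b-\phi_i(b-x)=s_i 3^{-m_i}x+(b-\phi_i(b))\}$ for $\E''$. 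Applying Lemma \ref{lem-a} to either, all translations lie in $[0,1]$, at least one is strictly positive, and any map with $s_i=-1$ has a strictly positive translation; hence conditions (i) and (iii) of $\f$ are automatic.

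All that remains is condition (ii): I need to produce, in one of the two IFSs, a map with $s=1$ and translation $0$, which I can then relabel as the first map. For $\{\phi_i^\E\}$ this means exhibiting some $i$ with $s_i=1$ and $\phi_i(a)=a$, and for $\{\psi_i\}$ it means exhibiting some $j$ with $s_j=1$ and $\phi_j(b)=b$. Because $a\in\F=\bigcup_i\phi_i(\F)$ and $\phi_{i_0}(\F)\subseteq\F\subseteq[a,b]$, any $i_0$ with $a\in\phi_{i_0}(\F)$ must realise the minimum of its image at $a$, so either $s_{i_0}=1$ with $\phi_{i_0}(a)=a$ (Case A), or $s_{i_0}=-1$ with $\phi_{i_0}(b)=a$. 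A symmetric dichotomy holds for any $j_0$ with $b\in\phi_{j_0}(\F)$: either $s_{j_0}=1$ with $\phi_{j_0}(b)=b$ (Case B), or $s_{j_0}=-1$ with $\phi_{j_0}(a)=b$. If Case A occurs for some $i_0$, I take $\E$; if Case B occurs for some $j_0$, I take $\E''$.

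The main obstacle is the remaining ``doubly orientation-reversing'' situation in which neither Case A nor Case B ever occurs: every $i$ with $a\in\phi_i(\F)$ has $s_i=-1$ and $\phi_i(b)=a$, and every $j$ with $b\in\phi_j(\F)$ has $s_j=-1$ and $\phi_j(a)=b$. To handle this I would replace the given IFS by its second iterate $\Psi^{(2)}:=\{\phi_p\circ\phi_q\}_{p,q=1}^k$, which is still a linear IFS with attractor $\F$ and maps of the form required by Theorem \ref{thm-1.1}. Picking any such $i_0$ and $j_0$, the composition $\phi_{i_0}\circ\phi_{j_0}$ has contraction ratio $(-1)(-1)\cdot 3^{-(m_{i_0}+m_{j_0})}=3^{-(m_{i_0}+m_{j_0})}>0$ and satisfies $\phi_{i_0}\circ\phi_{j_0}(a)=\phi_{i_0}(b)=a$, so Case A now holds for $\Psi^{(2)}$ and the previous argument, applied to this second-level IFS, produces the required $\Phi\in\f$ with $\F=a+\F_\Phi$.
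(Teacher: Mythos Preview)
Your proof is correct and follows essentially the same route as the paper's: set $a=\min\F$, $b=\max\F$, pick maps $\phi_{i_0},\phi_{j_0}$ whose images contain $a,b$ respectively, and split into the cases $s_{i_0}=1$, $s_{j_0}=1$, or both $-1$; in the last case pass to the second iterate so that $\phi_{i_0}\circ\phi_{j_0}$ fixes $a$ with positive ratio. The paper phrases the same dichotomy via the convex hull $[a,b]$ and relabels so that $a\in S_1([a,b])$, $b\in S_k([a,b])$, but the content is identical.
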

\begin{proof}
Let $\{S_i\}_{i=1}^k$ be a linear generating IFS of $\F$. By Theorem \ref{thm-1.1}, this IFS must be of the form
$$S_i(x)=s_i3^{-m_i}x+d_i,\quad i=1,\ldots, k,$$
where $s_i=\pm 1,m_i\in \N$ and $d_i\in \R$.    Let $[a,b]$ be the convex hull of $\F$. Then $\bigcup_{i=1}^{k}S_i([a,b])\subseteq [a,b]$, and  $\{a,b\}\subset \bigcup_{i=1}^{k}S_i([a,b])$. Without loss of generality, we assume that $a\in S_1([a,b])$ and $b\in S_k([a,b])$. There are 3 possibilities: (1) $s_1=1$;
(2) $s_k=1$; (3) $s_1=s_k=-1$.

First assume that (1) occurs. Then $S_1(a)=a$. Let $\E=\F-a:=\{x-a:\; x\in \F\}$.  Then $\E\subseteq [0,1]$ and
$$
\E=\F-a=\bigcup_{i=1}^k (S_i(\F)-a)=\bigcup_{i=1}^k (S_i(\F-a)+S_i(a)-a)=\bigcup_{i=1}^k\phi_i(\E),
$$
where the maps $\phi_i$ are defined by $\phi_i(x)=S_i(x)+S_i(a)-a$. Note that $\phi_1(x)=3^{-m_1}x$.
This together with Lemma \ref{lem-a} yields  $\Phi:=\{\phi_i\}_{i=1}^k\in \f$. Now $\F=a+\F_\Phi$.

Next assume that (2) occurs. Then $S_k(b)=b$. Let $\E=b-\F$.   Similarly, one has $\E\subseteq [0,1]$ and
$$\E=\bigcup_{i=1}^k \phi_i(\E),$$
where $\phi_{k+1-i}(x):=S_i(x)+b-S_i(b)$ for $i=1,\ldots k$. Again, we have  $\phi_1(x)=3^{-m_k}x$. Thus by Lemma \ref{lem-a} we have  $\Phi:=\{\phi_i\}_{i=1}^k\in \F$. Now  $\F'=1-\F= (1-b)+\F_\Phi$.

In the end, assume that (3) occurs. Then $a=S_1(b)$ and $b=S_k(a)$.
Hence $S_1\circ S_k(a)=a$. Let $\E=\F-a$. Then
$\E=\bigcup_{i=1}^{k}\bigcup_{i=1}^k \widetilde{\phi_{i,j}}(\E)$, where $\widetilde{\phi_{i,j}}(x)=S_i\circ S_j(x)+S_i\circ S_j(a)-a$.
Recoding the IFS $\{\widetilde{\phi_{i,j}}: 1\leq i,j\leq k\}$ to  $\{\phi_i\}_{i=1}^{k^2}$ so that $\phi_1=\widetilde\phi_{1,k}$, then $\Phi:=\{\phi_i\}_{i=1}^{k^2}\in \f$ and $\F=a+\F_\Phi$.
\end{proof}

\noindent{\bf Acknowledgements}. The authors would like to thank Ranran Huang and Ying Xiong for helpful comments.  The first author was partially supported by the RGC grant in CUHK. The second author was partially supported by NNSF (11225105, 11371339).
 The third author was supported in part by NSF Grant DMS-1043034 and DMS-0936830.

\end{document}